\documentclass{mcom-l}

\usepackage{tikzuml}
\usepackage{amssymb}
\usepackage{graphicx}
\usepackage{listings}
\usepackage{algorithm}
\usepackage{algorithmic}
\usepackage[colorlinks]{hyperref}
\usepackage[T1]{fontenc}

\newtheorem{theorem}{Theorem}[section]
\newtheorem{lemma}[theorem]{Lemma}
\theoremstyle{definition}

\theoremstyle{remark}
\newtheorem{remark}[theorem]{Remark}

\numberwithin{equation}{section}
\numberwithin{algorithm}{section}

\begin{document}

\title{On the tame kernels of imaginary cyclic quartic fields with class number one}


\author{Zhang Long}
\address{School of Mathematics and Statistics, Qingdao University, Qingdao 266071, P.R. China;
Institute of Applied Mathematics of Shandong, Qingdao University,  Qingdao 266071, P.R. China}
\curraddr{}
\email{zhanglong\_note@hotmail.com}
\thanks{}

\author{Xu Kejian}
\address{School of Mathematics and Statistics, Qingdao University, Qingdao 266071, P.R. China;
Institute of Applied Mathematics of Shandong, Qingdao University,  Qingdao 266071, P.R. China}
\curraddr{}
\email{kejianxu@amss.ac.cn}
\thanks{}

\subjclass[2010]{Primary 19C99, 19F15.}

\date{}

\dedicatory{}
\keywords{tame kernel, cyclic quartic field, multi-threaded parallel computing,  Object-Oriented Programm}
\begin{abstract}
 Tate  first proposed a method to  determine $K_2\mathcal{O}_F,$ the tame kernel of $F,$ and    gave the concrete computations for some special quadratic fields with small discriminant. After that,  many examples for quadratic fields with larger discriminants are given,
 and similar works also have been done for cubic  fields and for  some special quartic fields with discriminants not large.
 
 In the present paper, we investigate the case of  more general imaginary cyclic quartic field  $F=\mathbb{Q}\Big(\sqrt{-(D+B\sqrt{D})}\Big)$ with  class number one and large discriminants.
 The key problem is how to decrease the huge theoretical bound appearing in the computation to a manageable one and  the main difficulty   is  how to deal with  the large-scale data emerged in the process of computation.
 To solve this problem we have established a general architecture for the computation,  in particular we have done the works:
 (1) the PARI's functions are invoked in C++ codes;
 (2) the parallel programming approach is used in C++ codes;
 (3) in the design of algorithms and codes, the object-oriented viewpoint is used, so an extensible program is obtained.
 
 As an application of our program, we prove that $K_2\mathcal{O}_F$ is trivial
 in the following three cases: $B=1,D=2$ or $B=2, D=13$ or $B=2, D=29.$
 In the last case, the discriminant of $F$ is 24389, hence, we can claim   that our architecture  also works for the  computation of  the tame kernel of a number field with discriminant less than 25000.

\end{abstract}

\maketitle


\begin{section}{Introduction}

Let $F$ be a number field and $\mathcal{O}_{F}$ the ring of algebraic integers of $F,$ and let $K_{2}\mathcal{O}_{F}$ denote the $K_2$ of $\mathcal{O}_{F}.$
Garland \cite{Garland001} proved that $K_{2}\mathcal{O}_{F}$ is a finite abelian group. However, $K_{2}\mathcal{O}_{F}$ can be regarded as tame kernel.

In fact, let $K_2F$ be the Milnor $K_2$-group, and let  $k_v=\mathcal{O}_{F}/\mathcal{P}_v$
and $k^{*}_v$ the multiplicative group of $k_v$, where  $\mathcal{P}_v$ is the prime ideal corresponding to a finite prime place $v.$
Then we have the well-known tame homomorphism:
\begin{equation*}
\begin{split}
\partial_{v}:K_{2}F\rightarrow{k^{*}_v}
\end{split}
\end{equation*}
which is 	defined by
\begin{equation*}
\begin{split}
\partial_{v}(\{x,y\})={(-1)^{v(x)v(y)}\frac{x^{v(y)}}{y^{v(x)}}(\mbox{mod}\, \mathcal{P}_{v})},
\end{split}
\end{equation*}
where $v(x), v(y)$ denote the valuations  of $x,y$ with respect to  the prime $v$ respectively,  and thus we have
$$\partial=\bigoplus_{v}\partial_{v}:K_{2}F\rightarrow{\bigoplus_{v}k^{*}_{v}},$$
where $v$ runs over all finite places. The kernel  ker$\partial$  is called  the tame kernel of the field $F.$ D.Quillen \cite{Quillen}proved that ker$\partial=K_2\mathcal{O}_F.$

There is no an effective algorithm for determining the tame kernel of a given number field directly, because it is defined noneffectively,
The first   method of determining   the tame kernel of a given number field was proposed by J.Tate \cite{Tate001}. Now, we describe Tate's method in more  details.

Let  $Nv$  be the number $|k_v|,$ which is called the norm of $v$, and
let $v_{1},v_{2},v_{3}, \ldots,v_n,\ldots$
be all finite places of $F$ ordered in such a way that $Nv_{i }\leq{Nv_{i+1}}$, for $i=1,2,3,\cdots.$

Let $  S_{m}=\{v_{1},\cdots,v_{m}\}$ ($S_{0}= \emptyset$), and let
$$ \mathcal{O}_{m}=\{a\in{F}: v(a)\geq0,v\not\in{S_{m}}\},$$
\begin{equation*}
\begin{split}
U_{m}=\{a\in{F}: v(a)=0,v\not\in{S_{m}}\}.
\end{split}
\end{equation*}
Thus $ \mathcal{O}_{0}$ and $U_{0}$ are just  the ring of algebraic integers
and  the group of units  respectively.
\par Let $K_{2}^{S_m}F$ be the subgroup of $K_{2}F$ generated by symbols $\{x,y\}$, where $x,y\in{U_{m}}$. Then we have $K_{2}F=\bigcup_{m=1}^{\infty}K_{2}^{S_m}F.$
Clearly,  $\partial_{v_{m}}$
induces the  homomorphism
\begin{equation*}
\begin{split}
\partial_{v_{m}}:\frac{K_{2}^{S_m}F}{K_{2}^{S_{m-1}}F} \longrightarrow {k^{*}_{v_{m}}}.
\end{split}
\end{equation*}
Bass and Tate \cite{Bass001} proved that for  sufficiently large $m,$  $\partial_{v_{m}}$ is  isomorphic,
which implies
\begin{equation*}
\begin{split}
K_{2}\mathcal{O}_{F}=\mbox{ker}\Big(\partial: K_2^{S_m}F\longrightarrow \coprod_{v\in S_m}k^*_v\Big).
\end{split}
\end{equation*}

Thus, if we can make the large $m$ as small as possible and get sufficiently many
relations satisfied by elements of $K_{2}^{S_{m}}F$, then we may
determine the tame kernel $K_{2}\mathcal{O}_{F}.$  So the problem is reduced to finding conditions for $\partial_{v_{m}}$ to be  isomorphic for  sufficiently large $m.$  The conditions  were found   by Tate.

Assume that  the prime ideal $\mathcal{P}_{m}$ of $ \mathcal{O}_{m}$
corresponding to $v_{m}$ is generated by $\pi_{m}.$ Define the morphisms:
$$\ \ \alpha: U_m\longrightarrow   \frac{K_{2}^{S_{m}}F}{K_{2}^{S_{m-1}}F},  \ \ \ \   \alpha(u)=\{u,\pi_{m}\}(\mbox{mod}\, K_{2}^{S_{m-1}}F),$$
$$\beta: U_m\longrightarrow   k^{*}_{v_{m}},  \ \ \ \   \beta(u)=u(\mbox{mod}\,\pi_{m}.\ \ \ \ \ \ \ \ \ \ \ \ \ \ \ \ \ \ \ \ \ $$
Then the conditions found by Tate are  presented in the following theorem.

\begin{theorem}\cite{Tate001}
	Suppose that prime ideal   $\mathcal{P}$ corresponding to a finite place $v\not\in{S_{m}}$   is generated by $\pi\in{ \mathcal{O}_{F}}$ and that $U_{1}'$ is a group generated by $(1+\pi{U_{m}})\bigcap{U_{m}}$.
	If
	there are  subsets $W_{m},C_{m},G_{m}$ of $U_{m}$ satisfying the following conditions:
	
	(i) $W_{m}\subseteq{C_{m}U_{1}'}$ and $U_{m}$ is generated by $W_{m}$,
	
	(ii) $C_{m}G_{m}\subseteq{C_{m}U_{1}'}$ and $k^{*}_v$ is generated by $\beta{(G_{m})}$,
	
	(iii) $1\in{C_{m}\bigcap{ker\beta}}\subseteq{U_1'},$\\
	then $\partial{_{v}} $ is an isomorphism.
\end{theorem}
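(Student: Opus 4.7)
The plan is to reduce the isomorphism statement for $\partial_v$ to bijectivity of the related map $\beta$ on a suitable quotient of $U_m$. Unwinding definitions, $\partial_v \circ \alpha = \beta$; moreover $\alpha$ is surjective because any generator $\{x, y\}$ of $K_2^{S_m \cup \{v\}} F / K_2^{S_m} F$ can be reduced, using bilinearity and $\{\pi, -\pi\} = 1$, to a product of symbols $\{u, \pi\}$ with $u \in U_m$ modulo $K_2^{S_m} F$. Condition (ii) makes $\beta$ surjective. Thus proving $\partial_v$ is an isomorphism reduces to proving $\ker \alpha = \ker \beta$.

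One inclusion, $\ker \alpha \subseteq \ker \beta$, is immediate from $\partial_v \circ \alpha = \beta$. For the converse I would first prove the key lemma $U_1' \subseteq \ker \alpha \cap \ker \beta$: containment in $\ker \beta$ is immediate since $1 + \pi w \equiv 1 \pmod{\pi}$, and for $\ker \alpha$ the Steinberg relation applied to $u = 1 + \pi w$ gives $\{u, -\pi w\} = \{u, 1 - u\} = 1$, which expands to $\{u, \pi\} = \{u, -w\}^{-1} \in K_2^{S_m} F$ since $u, -w \in U_m$; multiplicativity extends this to all of $U_1'$.

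The main obstacle is the remaining inclusion $\ker \beta \subseteq U_1'$, equivalently that the induced map $\bar{\beta}: \bar{U} \to k_v^*$ is injective, where $\bar{U} := U_m / U_1'$. Writing $\bar{C}, \bar{G}, \bar{W}$ for the images of $C_m, G_m, W_m$ in $\bar{U}$, condition (iii) makes $\bar{\beta}|_{\bar{C}}$ injective, so $\bar{C}$ is finite. Condition (ii) together with $1 \in \bar{C}$ gives $\bar{G} \subseteq \bar{C}$ and makes right multiplication by each $g \in \bar{G}$ an injective (hence bijective, by finiteness) self-map of $\bar{C}$. Consequently the subgroup $K := \langle \bar{G} \rangle$ of $\bar{U}$ is contained in $\bar{C}$, and since $\bar{\beta}(\bar{G})$ generates $k_v^*$ while $\bar{\beta}|_K$ is injective, $\bar{\beta}$ restricts to an isomorphism $K \cong k_v^*$. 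The cardinality chain $|K| = |k_v^*| \geq |\bar{C}| \geq |K|$ collapses to $\bar{C} = K$, and condition (i) then forces $\bar{U} = \langle \bar{W} \rangle \subseteq \langle \bar{C} \rangle = K$, so $\bar{\beta}: \bar{U} \to k_v^*$ is an isomorphism and $\ker \beta = U_1'$. The delicate aspect is precisely the interlocking of (i)--(iii) with the finiteness of $k_v^*$: (iii) controls the kernel on $\bar{C}$, (ii) forces closure so that $K \subseteq \bar{C}$, and (i) forces $\bar{C}$ to generate $\bar{U}$, with finiteness of $k_v^*$ being what ultimately pins down $\bar{U}$ itself.
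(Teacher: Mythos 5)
The paper never proves Theorem 1.1; it is cited to Tate's appendix \cite{Tate001} and used as a black box, so there is no internal proof to compare against. Reviewing your argument on its own merits: the overall plan is right (reduce to $\ker\alpha=\ker\beta=U_1'$, the Steinberg-relation computation showing $U_1'\subseteq\ker\alpha\cap\ker\beta$, and the interplay of (i)--(iii) with the finiteness of $k^*_v$), but there is a genuine gap in the step ``condition (iii) makes $\bar\beta|_{\bar C}$ injective, so $\bar C$ is finite.'' In the quotient $\bar U=U_m/U_1'$, condition (iii) gives only $\bar C\cap\ker\bar\beta=\{\bar 1\}$, i.e.\ the fiber of $\bar\beta|_{\bar C}$ over $1$ is trivial. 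That is not injectivity of a set map: for $c_1,c_2\in C_m$ with $\beta(c_1)=\beta(c_2)$, the element $c_1c_2^{-1}$ lies in $\ker\beta$ but need not lie in $C_m$, so (iii) is silent. ``Trivial kernel implies injective'' requires $\bar C$ to be a subgroup, which is essentially what the whole argument is trying to establish, so the step is circular; and since your route to $K=\langle\bar G\rangle\subseteq\bar C$ relies on $\bar C$ being finite (to upgrade the injection $\bar c\mapsto\bar c\bar g$ to a bijection), the gap propagates.

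The fix is to use finiteness of $k^*_v$ rather than of $\bar C$. Iterating (ii) from $\bar 1\in\bar C$ gives $\bar g^n\in\bar C$ for every $\bar g\in\bar G$ and every $n\geq 0$. Taking $n=d$ the order of $\beta(g)$ in $k^*_v$, one gets $\bar g^d\in\bar C\cap\ker\bar\beta=\{\bar 1\}$, so $\bar g$ has finite order in $\bar U$ and $\bar g^{-1}=\bar g^{d-1}$ is a nonnegative power of $\bar g$. Consequently every element of $K=\langle\bar G\rangle$ is a finite product of elements of $\bar G$, and iterating (ii) once more yields $K\subseteq\bar C$ with no finiteness hypothesis on $\bar C$. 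From here your argument is sound as written: $\bar\beta|_K$ is a homomorphism with trivial kernel ($K\subseteq\bar C$) and surjective image, so $K\cong k^*_v$; for $\bar c\in\bar C$ pick $\bar k\in K$ with $\bar\beta(\bar k)=\bar\beta(\bar c)$, then $\bar c\bar k^{-1}\in\bar C\cap\ker\bar\beta=\{\bar 1\}$, giving $\bar C=K$; and (i) forces $\bar U=\langle\bar W\rangle\subseteq K$, so $\bar\beta$ is an isomorphism and $\ker\beta=U_1'=\ker\alpha$, whence $\partial_v$ is an isomorphism.
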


Hence, according to Tate's above method, to determine the tame kernel of a given number field, it suffices to construct suitable subsets $W_{m},C_{m},G_{m}$ of $U_{m}$  and determine the bound of  $m.$

Using his method, Tate could give  the analysis for the six first imaginary quadratic cases because in these cases the bound of $m$ is very small. More precisely, let $F=\mathbb{Q}(\sqrt{-d}).$ Then Tate proved that $K_2\mathcal{O}_F$ is trivial if $d=1,2,3,11,$ and $K_2\mathcal{O}_F\cong \mathbb{Z}/2\mathbb{Z}$ if $d=7,15.$

Subsequently, Qin \cite{Qin001,Qin002} investigated the cases  $d=6$ and $35$  with a modification of the choice of the subset $C_m$ in Tate's method, and   nearly at the same time, Ska\l ba \cite{Skalba001} gave the computations of the cases $d=5$ and $19$ with the help of his generalized Thue theorem (GTT); essentially, it is also a modification of the choice of $C_m.$
After that, for quadratic fields Browkin improved Ska\l ba's method to get a more accurate
bound of $m,$  which allowed him to compute the cases $d=23$ and $31$ \cite{Browkin001,Browkin002}.
It should be pointed out that  all of these works  were done by hand.

The further computations for quadratic fields are due to Belabas and Gangl who used computers and determined the tame kernel for   all $d$ up to  $1000$ with only $7$ exceptions. \cite{Belabas001}

The tame kernels of cubic  fields had been investigated by Browkin in \cite{Browkin004}. His numerical computations were performed using the package PARI/GP.

The cases of quartic fields are more complicated. Using Ska\l ba's GTT,
Guo proved that $K_2\mathcal{O}_F$ is trivial
when $F=\mathbb{Q}(\zeta_8)$ (see \cite{XuejunGuo001}). He did it also by hand.
When $F=\mathbb{Q}(\zeta_5)=\mathbb{Q}(\sqrt{-(5+2\sqrt{5})}),$ under the assumption of the Lichtenbaum conjecture,
Browkin once conjectured in the  paper(\cite{Browkin003} that the tame kernel $K_2\mathcal{O}_F$ is trivial.
In a recent paper, we confirmed Browkin's conjecture \cite{zx}. However, the arithmetic properties of   field $\mathbb{Q}(\zeta_5)$ are much more complicated than those of  quadratic fields and biquadratic fields.
Therefore the discussion is longer, and more cases are  considered. Actually,
we have  to use   PARI/GP and some other algorithms.

For  further computations, the bound $m$ should be determined theoretically. This was solved by R.Groenewegen \cite{GROENEWEGEN001}, who gave a  theoretical bound of $m$ for a general number field. In this paper, for the cyclic quartic field we also find a way to obtain the theoretical bound, and  in some cases our bounds are better than Groenewegen's (Remark 4.7).

Thus, for a given number field, if the theoretical bound is good enough, that is, if it is   a manageable, in another words, if the computation can be done by hand, then through constructing enough relations, we may determine the tame kernel of the given number field.
But unfortunately, actually these theoretical bounds may be very large,  far from being manageable.
This weak point makes the concrete computation nearly impossible for a higher degree number field, even  for a cyclic quartic field.

Hence, a new problem arises:

{\bf Problem:} {\it Whether one can give a  practical  method to decrease the theoretical bound to a manageable one ?}

Belabas and Gangl \cite{Belabas001} considered this problem. In order to get a manageable bound of $m,$  they proceed as follows.
Let $T=S\cup \{v\}$ and assume that $K_2\mathcal{O}_F\subseteq K_2^{T}F.$ They want to prove that, in fact, we already have
$K_2\mathcal{O}_F\subseteq K_2^{S}F.$ This will be used in the following situation: starting from the  initial $S$ determined by the theoretical bound, we iterate this process, successively truncating $S$ by deleting its last element with respect to the given ordering, hoping to reduce the set of places to a manageable size.
This   is a very natural way to decrease the theoretical bound to a manageable one, which has been used by many authors.
But again unfortunately its concrete realization  is   not easy in general.

In fact, if the discriminant of the given number field is not large,  then the difference between the theoretical bound and the manageable one is not large either, so it is  easy to for  one to do the work by writing a simple program or computing manually;
but, if the discriminant is large, then the difference between the theoretical bound and the manageable one is also large, so the work-load must increase  exponentially, as Balabas told us in a private letter, hence, in this case, we must face some challenges coming from dealing with the computation-intense task in the process solving the complex question.

In order to realize their plan, in particular
in the  construction of the set $C,$  which is one of the most difficulties to overcome, Balabas and Gangl \cite{Belabas001}, use the following three algorithms:
a) Fincke and Pohst's algorithm;
b) Method of lattice;
c) LLL-algorithm.
Balabas and Gangl's plan was eventually adapted for arbitrary number fields and implemented in the PARI/GP scripting language, but so far, as they pointed out \cite{Belabas001}, parts of the program remain specific to the imaginary quadratic case.

In the present  paper, we give a completely different and new approach. The key idea is that we use Object-Oriented Programming(OOP) and the Multi-threaded Parallel Technology.

It is well known that the idea of Object-Oriented Programming(OOP), developed as the dominant programming methodology in the early and mid 1990s, is to design data forms that correspond to the essential features of a problem.
So OOP brings a new approach to the challenge of the large-scale programming.\cite{prata}

In this  paper, to compute the tame kernels of imaginary cyclic quartic fields of class number one with large discriminant,
with the object-oriented viewpoint we   develop  a program,  the software framework of which is extensible and reusable and can be made as a base on which more tame kernels of number fields can be computed.
Moreover, to visualize the program's architectural blueprint, we also use Unified Modeling Language(UML)\cite{JCO}, which is a general-purpose, developmental and modeling language in the field of software engineering.

More precisely, in order to establish the software framework and visualize the architectural blueprint, we need to do the following works.
Firstly, we need to reduce Tate's theorem to a software engineering version, so as to give a main use case of a user's interaction with the system. The use case is not only a beginning of building the software framework but also a main driving force. In order to  visualize the use case,  we give the use case diagram (see Figure 1), which can be regarded as a UML description  of  Tate's theorem.
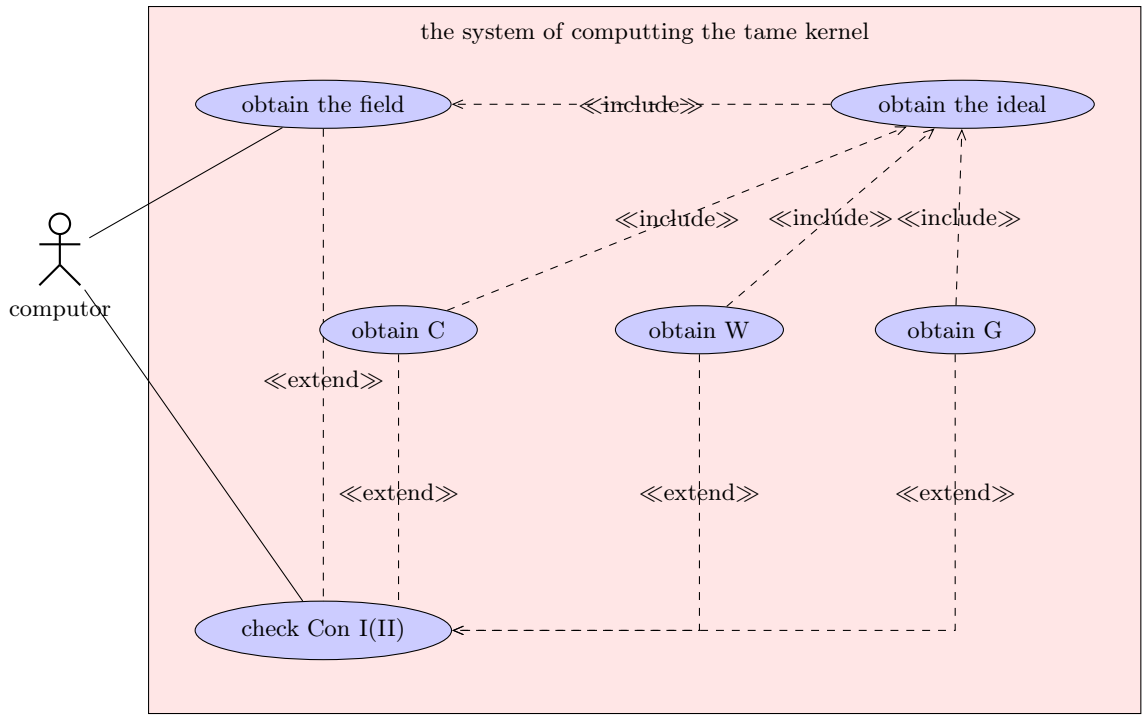
\begin{figure}[htbp]
	\begin{center}
		\begin{tikzpicture}
		\begin{umlsystem}[x=2.5, fill=red!10]{the system of computting the tame kernel}
		\umlusecase{obtain the field}
		\umlusecase[x=1,y=-3]{obtain C}
		\umlusecase[x=5.0,y=-3]{obtain W}
		\umlusecase[x=8.4,y=-3]{obtain G}
		\umlusecase[x=8.5,y=0] {obtain the ideal}
		\umlusecase[x=0, y=-7]{check Con I(II)}
		\end{umlsystem}
		\umlactor[x=-1,y=-2]{computor}
		\umlassoc{computor}{usecase-1}
		\umlassoc{computor}{usecase-6}
		\umlinclude [name=incl]{usecase-2}{usecase-5}
		\umlinclude [name=incl]{usecase-3}{usecase-5}
		\umlinclude [name=incl]{usecase-4}{usecase-5}
		\umlinclude [name=incl]{usecase-5}{usecase-1}
		\umlVHextend{usecase-2}{usecase-6}
		\umlVHextend{usecase-3}{usecase-6}
		\umlVHextend{usecase-4}{usecase-6}
		\umlVHextend{usecase-1}{usecase-6}
		\end{tikzpicture}
	\end{center}
	\caption{the use case diagram}
\end{figure}

Secondly,
we design  three classes: {\it CquarField}, {\it Cideal} and {\it Ccheck} as the structure of our program since in the view of OOP  all classes of a software constitute the core of the software framework. Moreover, using UML we  represent the relationships with the classes as a static class diagram, which
is generally used for general conceptual modelling of the systematic of a program
and for detailed modelling translating the models into programming code. The relationships with the three classes are represented the following static class diagram (see Figure 2) and the detailed design of the three classes and the static class diagram are introduced in (4.3).
\begin{figure}[htbp]
	\begin{center}
		\begin{tikzpicture}
		\begin{umlpackage}{p}[x=1,y=6]
		\umlclass[x=1,y=5,width=8ex]{Ccheck}{qf:CquarField}{{Ccheck(a:int, b:int, c:int, d:int)}
			\\ Pr\_chCndOn(n\_thrd:int):bool
			\\ Pr\_chCndTw(n\_thrd:int):bool}
		\umlclass[x=5,y=1,width=8ex]{CquarField}{}
		{
			CquarField(a:int, b:int, c:int, d:int)
			\\getPrimeTable(num\_condition):int
		}
		\umlclass[x=7,y=5,width=8ex]{Cideal}{qf:CquarField}
		{getSetInitW():GEN\\
			getSetInitG():GEN\\
			Para\_getSetInitC():void}
		\end{umlpackage}
		\umlunicompo[geometry=-|, mult=*, pos=1.7]{CquarField}{Cideal}
		\umlaggreg[geometry=|-, mult=1, pos=0.8, angle1=30, angle2=60, loopsize=2cm]{Ccheck}{CquarField}
		\end{tikzpicture}
	\end{center}
	\caption{the static class diagram}
\end{figure}
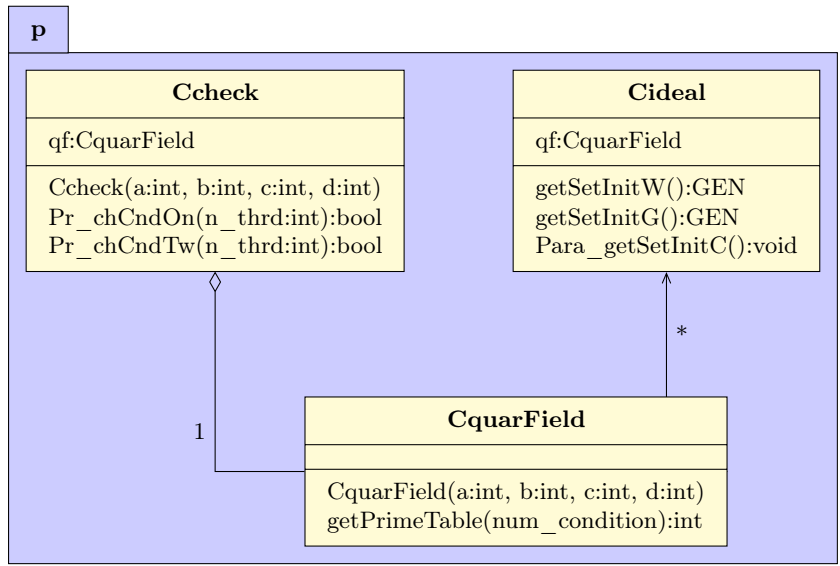

Finally,
in the view of software engineering, it is not enough to provide the use case diagram and the static class diagram to represent the program's  architecture, in another words, we must show  how objects operate with one another and in what order.
Since it is well known that in UML, a sequence diagram, which is an interaction diagram and also a construct of a message sequence chart, shows object interactions arranged in time sequence, thus we design the sequence diagram (see Figure 3) according to the relationships with the objects, which are represented in Tate's theorem, and in view of the difficulties  we must face during construction of the  program of computing the tame kernel of imaginary cyclic quartic field, such as large-scale computing.

\lstset{breaklines=true}
\hoffset=0cm
\voffset=-3.0cm	
\begin{figure}[htbp]
	\begin{center}
		\begin{tikzpicture}
		\begin{umlseqdiag}
		\umlactor[x=1]{computor}
		\umlobject[x=4]{checker}
		\umlobject[x=7.7]{qfCom}
		\umlobject[x=10]{idealCom}
		\umldatabase[x=13,class=DB, fill=blue!20]{db}
		\begin{umlcall}[op={check(a,b,c,d)}, type=synchron, return=1]{computor}{checker}
		\begin{umlcall}[op={CquarticField()}, type=asynchron, return=1]{checker}{qfCom}
		\end{umlcall}
		\begin{umlcall}[op={Pr\_getSetC()}, type=synchron, return=2]{computor}{checker}
		\begin{umlcall}[op={Part\_getSetC()}, type=asynchron, return=3]{checker}{checker}
		\begin{umlcall}[op={getPrimeTable(int)}, type=asynchron, return=3]{checker}{qfCom}
		\end{umlcall}
		\begin{umlcall}[op={pr\_getAllideal()}, type=synchron, return=3]{checker}{qfCom}
		\begin{umlcall}[op={pt\_getAlidl()}, type=asynchron, return=3]{qfCom}{qfCom}
		\end{umlcall}
		\end{umlcall}
		\begin{umlcall}[op={new ideal()}, type=asynchron, return=3]{checker}{idealCom}
		\end{umlcall}
		\begin{umlcall}[op={getSetInitG()}, type=asynchron, return=3]{checker}{idealCom}
		\end{umlcall}
		\begin{umlcall}[op={Pr\_getSetInitC()}, type=synchron, return=4]{checker}{idealCom}
		\begin{umlcall}[op={Pt\_getSetInitC()}, type=asynchron, return=3]{idealCom}{db}
		\end{umlcall}
		\end{umlcall}
		\end{umlcall}
		\end{umlcall}
		\begin{umlcall}[op={Pr\_chckCndOn()}, type=synchron, return=5]{computor}{checker}
		\begin{umlcall}[op={Part\_chCondOn(int)}, type=asynchron, return=5]{checker}{checker}
		\begin{umlcall}[op={ideal()}, type=asynchron, return=5]{checker}{idealCom}
		\end{umlcall}
		\begin{umlcall}[op={chCondOn()}, type=synchron, return=6]{checker}{idealCom}
		\begin{umlcall}[op={gp\_rd\_strm()}, type=asynchron, return=7]{idealCom}{db}
		\end{umlcall}
		\begin{umlcall}[op={getSetInitW()}, type=asynchron, return=5]{idealCom}{idealCom}
		\end{umlcall}
		\begin{umlcall}[op={getUm()}, type=asynchron, return=5]{idealCom}{idealCom}
		\end{umlcall}
		\end{umlcall}
		\end{umlcall}
		\end{umlcall}
		\begin{umlcall}[op={Pr\_chCndTw()}, type=synchron, return=8]{computor}{checker}
		\begin{umlcall}[op={Part\_chCndTwo()}, type=asynchron, return=5]{checker}{checker}
		\begin{umlcall}[op={ideal()}, type=asynchron, return=5]{checker}{idealCom}
		\end{umlcall}
		\begin{umlcall}[op={chCondTw()}, type=synchron, return=9]{checker}{idealCom}
		\begin{umlcall}[op={gp\_rd\_strm()}, type=asynchron, return=10]{idealCom}{db}
		\end{umlcall}
		\begin{umlcall}[op={getSetInitG()}, type=asynchron, return=5]{idealCom}{idealCom}
		\end{umlcall}
		\begin{umlcall}[op={getUm()}, type=asynchron, return=5]{idealCom}{idealCom}
		\end{umlcall}
		\end{umlcall}
		\end{umlcall}
		\end{umlcall}
		\end{umlcall}
		\end{umlseqdiag}
		\end{tikzpicture}
	\end{center}
	\caption{the sequence diagram}
\end{figure}
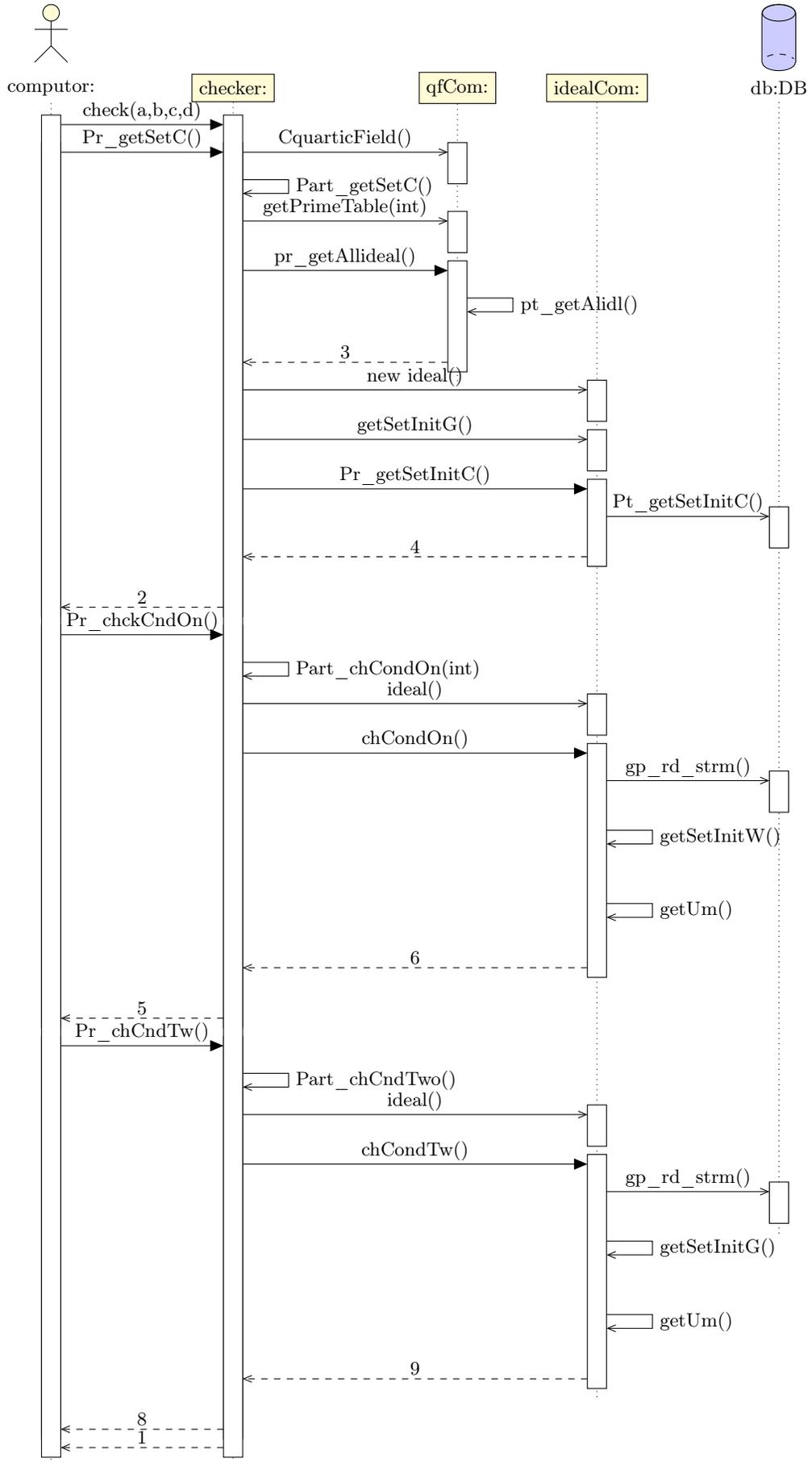

This is  what we have done in this paper in the  design of the program framework and  the program architecture in UML.
However, during building the program, we  meet   two difficulties.

One difficulty is how to create the codes which can be used to compute invariants of a number field.
Though some authors have designed some excellent algorithms for the computation,
the workload is so burdensome that it is almost impossible to implement so much algorithms for the  computation of tame kernels.
So it may be the  viable option to use the third-party libraries  to obtain the invariants.
Hence, PARI library, looked on as a reliable component, provides a powerful support to our program.

The other difficulty  is how to deal with the large-scale data emerging  in the process of computation.
In this study, we find that the  amount of  computation  of  tame kernels grows  explosively
as the discriminant and degree of extension of number fields get larger. In \cite{Belabas001} Belabas and Gangl
have computed some tame kernels of the quartic fields with
absolute values of  discriminants not large and
the workloads in computation of the tame kernels of these quartic fields are nearly equal to that of $F=\mathbb{Q}(\zeta_{5})$.
But now, as an example, we compute the tame kernel of $F=\mathbb{Q}(\sqrt{-(13+2\sqrt{13})})$ whose discriminant is 2917, and
we find that the workloads for  the computation of the tame kernels of  $F=\mathbb{Q}(\zeta_{5})$ and  $F=\mathbb{Q}(\sqrt{-(13+2\sqrt{13})})$
are not to be mentioned in the same breath.
In fact, in the case of  $F=\mathbb{Q}(\sqrt{-(13+2\sqrt{13})})$,  we once wrote some script codes with PARI/GP to compute its tame kernel.
After deploying the codes on PC and running about 24 hours, we make a rough estimate of running time.
It needs at least one year! So  these script codes are not time-base.

Thus, it is for this reason that motivates  us to design, in order to decrease the running time, the above  architecture, which is an extensible, reusable and component-based application
by associating  the Multi-threaded Parallel Technology and PARI library with the implemented architecture.
And at last,  deploying the application and running about 2 hours,
we  obtain the result of tame kernel of $F=\mathbb{Q}(\sqrt{-(13+2\sqrt{13})})$.

After that, we took about 3 months to compute the tame kernel of the number field $F=\mathbb{Q}(\sqrt{-(29+2\sqrt{29})})$ whose discriminant is $24389.$

In a private letter, Balabas told us that it took about 8 hours to obtain the tame kernel of $F=\mathbb{Q}(\sqrt{-(13+2\sqrt{13})})$ by a program  implementing the algorithms in the paper \cite{Belabas001}.
The program has been published in \url{https://www.math.u-bordeaux.fr/~kbelabas/research/software/K2-1.1.tgz.}
We also tried ever to use the same program to compute the tame kernel of
$F=\mathbb{Q}(\sqrt{-(29+2\sqrt{29})})$.
But, after  running the program about 2 hours, a bug  emerged and the program was interrupted.
This story implies that although some kind of problems can be solved efficiently by using the existing program without difficulty,
the computation of large-scale problems may be a nontrivial task, even a long-time running being acceptable, because of the restriction of the memory and CPU limitation.
Therefore, the design of programs as well as  its  efficiency  and reasonability may be essentially   depended on the scale of  computation.

Hence, as an application of our program,  now we are sure  from the above computation that our architecture  also works for the  computation of  the tame kernel of a number field with discriminant less than 25000.

In particular, as concrete examples,
we have proved the following theorem.

\begin{theorem}
	Let $F=\mathbb{Q}\Big(\sqrt{-(D+B\sqrt{D})}\Big)$ be cyclic quartic field.
	Then the tame kernel $K_{2}\mathcal{O}_{F}$ is trivial in the following cases:
	
	(i)  \cite{zx}  $B=2, D=5,$ i.e., $F=\mathbb{Q}(\zeta_5);$
	
	(ii) $B=1, D=2;$
	
	(iii)  $B=2, D=13;$
	
	(iv)  $B=2, D=29.$
\end{theorem}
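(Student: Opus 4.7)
The plan is to apply Tate's theorem (Theorem 1.1) separately to each of the three fields $F = \mathbb{Q}\bigl(\sqrt{-(D+B\sqrt{D})}\bigr)$ corresponding to cases (ii), (iii) and (iv), using the software architecture described earlier in the paper. For each field I would first invoke PARI through the CquarField class to gather the basic arithmetic data: the discriminant (small for $D=2$, equal to $2917$ for $D=13$, and to $24389$ for $D=29$), a Minkowski-reduced integral basis, the class number (which is one by hypothesis, so every prime ideal is principal), the roots of unity, and a system of fundamental units. Class number one is essential, since it guarantees that every prime ideal of $\mathcal{O}_m$ is generated by a single element $\pi_m$, so the maps $\alpha$ and $\beta$ appearing in Theorem 1.1 are well-defined at every finite place.

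Next, I would establish a theoretical bound $M$ with the property that for every $m > M$ there exist sets $W_m, C_m, G_m$ verifying conditions (i)--(iii) of Tate's theorem; once this is in place, $K_2\mathcal{O}_F$ is identified with $\ker\bigl(\partial: K_2^{S_M}F \to \bigoplus_{v \in S_M} k_v^*\bigr)$. The bound itself would come from the cyclic-quartic refinement announced in Remark 4.7, which is sharper than Groenewegen's generic bound. Because the raw $M$ is still very large for $D=13$ and enormous for $D=29$, I would then iteratively shrink $S_M$ \emph{\`a la} Belabas--Gangl: starting from $T = S_M$, delete the largest place $v \in T$ and verify $K_2\mathcal{O}_F \subseteq K_2^{T\setminus\{v\}}F$ by exhibiting concrete $W, C, G$ at $v$ satisfying Tate's three conditions. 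Repeating this until no further place can be dropped leaves a small residual set $S$; one then shows directly that $\partial: K_2^S F \to \bigoplus_{v\in S} k_v^*$ has trivial kernel, yielding $K_2\mathcal{O}_F = \{1\}$.

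The principal obstacle --- and the reason the bespoke architecture is needed --- is the construction of $C_m$ at each prime $v = v_m$. Concretely, one must enumerate $S$-integers of small norm, decompose them into prime ideals in $S$, test the congruences modulo $\pi_m$ that put them into $C_m U_1'$, and produce enough such relations so that (i) $U_m$ is generated by a $W_m \subseteq C_m U_1'$, (ii) $k_{v_m}^*$ is generated by $\beta(G_m)$ with $C_m G_m \subseteq C_m U_1'$, and (iii) the intersection $C_m \cap \ker\beta$ is contained in $U_1'$. The enumeration uses Fincke--Pohst and LLL reduction inside a rank-four lattice whose size grows with the discriminant, and each candidate triggers a PARI ideal factorization; for $D=2$ the Ccheck/Cideal pipeline terminates in minutes, for $D=13$ the sheer volume already forces the parallelization, and for $D=29$ only the full multi-threaded, cached OOP design described in Figures 1--3 permits termination within the reported running times. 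Once the program has verified Tate's conditions at every prime in the reduced $S$ for each of the three fields, Theorem 1.1 delivers triviality of $K_2\mathcal{O}_F$ in cases (ii)--(iv); case (i) is imported from \cite{zx}.
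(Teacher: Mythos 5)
Your outline captures the broad shape of the argument---apply Tate's Theorem 1.1, obtain a theoretical bound via the cyclic-quartic refinement, then reduce that bound by verifying conditions I and II place by place using the OOP/parallel architecture---but it has two genuine gaps.

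First, a technical misattribution: you state that the construction of $C_m$ proceeds by ``Fincke--Pohst and LLL reduction inside a rank-four lattice.'' That is precisely the machinery of Belabas--Gangl, which this paper explicitly sets aside (``we give a completely different and new approach''). The algorithm actually used here (Algorithm 4.7/4.13 on \texttt{getSetInitC()}) builds $C_{m-1}$ by a direct traversal: it enumerates ideals of norm up to $(c_1c_2)^2 N(v_m)$ via \texttt{ideallist0}, multiplies by powers $\xi^k$ of the fundamental unit, and tests membership in the residue class modulo $\mathcal{P}_m$ via \texttt{idealval}. This is a brute-force search parallelized across threads, not a lattice-reduction pipeline. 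The Lemma 3.1 / Lemma 3.2 estimates on $|\alpha|$ and $|\sigma(\alpha)|$ in terms of the fundamental unit are what make this traversal bounded; LLL plays no role.

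Second, and more seriously, your final step is a placeholder where the paper needs a real argument. After the computational reduction you write that ``one then shows directly that $\partial: K_2^S F \to \bigoplus_{v\in S} k_v^*$ has trivial kernel,'' but you give no mechanism for doing so, and in fact the verification of conditions I and II alone does not finish the job. What actually happens in Section 5 is this: once conditions I and II have been confirmed for all primes below the theoretical bound, the tame kernel is seen to be generated by the two order-$\leq 2$ symbols $\{-1,-1\}$ and $\{-1,\xi\}$ (here one uses, via PARI, that the torsion in the unit group is just $\pm 1$). To rule these out the paper invokes Browkin's $2$-rank formula
\begin{equation*}
\mbox{2-rank}\, K_2\mathcal{O}_F = r_1(F) + g(2) - 1 + \mbox{2-rank}\bigl(\mbox{Cl}(F)/\mbox{Cl}_2(F)\bigr),
\end{equation*}
which evaluates to $0 + 1 - 1 + 0 = 0$ since $F$ is totally imaginary, there is a single prime over $2$, and the class group is trivial. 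It is this $2$-rank computation---not a direct inspection of $\ker\partial$ on the residual $S$---that eliminates the potential $2$-torsion and yields $K_2\mathcal{O}_F = \{1\}$. Without Browkin's formula (or some substitute handling the $2$-part), your proposed proof does not close.
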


\begin{remark}\quad (i) We have $h_F=1$  in the four cases in Lemma 2.1.
	
	(ii) By the present algorithms, the computation of the tame kernel of $\mathbb{Q}(\zeta_5)$ is quite easy.
\end{remark}
In the following, the conditions $W_{m}\subseteq{C_{m}U_{1}'}$ and $C_{m}G_{m}\subseteq{C_{m}U_{1}'}$ in Theorem 1.1  will be referred to be condition I and condition II
respectively.
\end{section}

\begin{section}{The cyclic quartic fields}
The following explit representation of a cyclic quartic field is proved in the reference \cite{HARDY001}.

\begin{lemma}\quad
	If $F$ is a real or imaginary cyclic quartic extensin of $\mathbb{Q}$, then there are integers A,B,C and D such that
	\begin{equation}
	F=\mathbb{Q}\Big(\sqrt{A(D+B\sqrt{D})}\Big)=\mathbb{Q}\Big(\sqrt{A(D-B\sqrt{D})}\Big)
	\end{equation}
	where
	\begin{equation}
	\begin{cases}
	A~ is~ squarefree~ and~ odd,\\
	D=B^2+C^2~is ~squarefree,~B>0,~C>0,\\
	A~and~D~are~relatively~prime.
	\end{cases}
	\end{equation}
\end{lemma}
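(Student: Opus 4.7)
My plan is to combine Galois theory for the cyclic group $\mathbb{Z}/4\mathbb{Z}$ with a Kummer-type argument over the unique quadratic subfield. Since $\mathrm{Gal}(F/\mathbb{Q})\cong\mathbb{Z}/4\mathbb{Z}$ has exactly one subgroup of order two, $F$ contains a unique quadratic subfield, which I write as $K=\mathbb{Q}(\sqrt{D})$ with $D$ squarefree. Because $[F:K]=2$, I can write $F=K(\sqrt{\gamma})$ for some $\gamma=u+v\sqrt{D}\in K^{\times}$ that is not a square in $K$, and after clearing denominators and pulling out rational square factors I may take $u,v\in\mathbb{Z}$.

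Next I would exploit cyclicity. Let $\sigma$ generate $\mathrm{Gal}(K/\mathbb{Q})$, so $\sigma(\gamma)=u-v\sqrt{D}$. For $F/\mathbb{Q}$ to be Galois, $\sigma$ must lift to $F$, which forces $\sqrt{\sigma(\gamma)}\in F$; equivalently $\sqrt{\gamma\,\sigma(\gamma)}=\sqrt{u^{2}-v^{2}D}\in F$. Every element of $F$ whose square is rational already lies in the unique quadratic subfield $K$, so $u^{2}-v^{2}D$ is either a rational square or $D$ times a rational square. The first alternative would produce a second rational quadratic subfield in $F$ and hence force $\mathrm{Gal}(F/\mathbb{Q})\cong(\mathbb{Z}/2\mathbb{Z})^{2}$, contradicting cyclicity. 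So the second alternative must hold: $u^{2}-v^{2}D=Dt^{2}$ for some $t\in\mathbb{Q}$.

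I would then read off the claimed form. From $u^{2}=D(v^{2}+t^{2})$ and $D$ squarefree we get $D\mid u$; writing $u=Dm$ gives $Dm^{2}=v^{2}+t^{2}$. After pulling a common integer factor $A$ out of the pair $(v,t)$ and using that scaling $\gamma$ by a square in $K^{\times}$ does not change $F$, I can put $\gamma$ in the form $A(D+B\sqrt{D})$, where $B=v/A$, $C=t/A$, and the identity $Dm^{2}=v^{2}+t^{2}$ becomes $D=B^{2}+C^{2}$. Replacing $\gamma$ with its conjugate $\sigma(\gamma)$ flips the sign of $B$, so I may assume $B,C>0$, which also gives the second expression in~(2.1).

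The main obstacle is the final normalization: showing that $A$ can be chosen squarefree, odd, and coprime to $D$ simultaneously while keeping $D$ squarefree. Squarefreeness of $A$ and $\gcd(A,D)=1$ follow by absorbing any square factor, or any prime common to $A$ and $D$, into $\gamma$ via multiplication by an appropriate square in $K^{\times}$ (using that $D$ itself is squarefree). Oddness of $A$ is the delicate point: any factor of $2$ in $A$ must be redistributed into $(B,C,D)$ using the identity $2(B^{2}+C^{2})=(B+C)^{2}+(B-C)^{2}$ together with the congruence classification of $B^{2}+C^{2}\pmod{8}$, which changes the parametrization while preserving the field. This bookkeeping is elementary but tedious, and is precisely the content of \cite{HARDY001}, which we cite rather than reproduce.
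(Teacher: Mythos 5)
The paper does not prove this lemma; it simply states that the representation ``is proved in the reference \cite{HARDY001}'' (Hardy--Hudson--Richman--Williams--Holtz). Your sketch therefore supplies strictly more mathematical content than the paper does, and it follows the standard Kummer-theoretic route that the cited reference makes precise: a cyclic quartic $F$ has a unique quadratic subfield $K=\mathbb{Q}(\sqrt{D})$ with $D$ squarefree; write $F=K(\sqrt{\gamma})$ with $\gamma=u+v\sqrt{D}$; Galois closure forces $\sqrt{N_{K/\mathbb{Q}}(\gamma)}\in F$, hence (uniqueness of $K$) $N_{K/\mathbb{Q}}(\gamma)$ is a rational square or $D$ times one; cyclicity rules out the square case; the norm equation then forces $D\mid u$ and yields $Dm^{2}=v^{2}+t^{2}$, from which $D=B^{2}+C^{2}$ is extracted after renormalizing $\gamma$ modulo $K^{\times 2}$; and the conjugate $\sigma(\gamma)$ gives the second expression. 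This is exactly the argument of \cite{HARDY001}, with the oddness and squarefreeness of $A$ and the coprimality $(A,D)=1$ coming from the two-squares bookkeeping you explicitly defer to that source.

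Two steps in your sketch are stated more briskly than they deserve, though neither is a real gap. First, the claim that $N_{K/\mathbb{Q}}(\gamma)\in(\mathbb{Q}^\times)^2$ ``would produce a second rational quadratic subfield'' needs a line: if $\gamma\sigma(\gamma)=s^{2}$ then $\sqrt{\sigma(\gamma)}=\pm s/\sqrt{\gamma}\in F$, so $\mu=\sqrt{\gamma}+\sqrt{\sigma(\gamma)}$ and $\nu=\sqrt{\gamma}-\sqrt{\sigma(\gamma)}$ have rational squares $2u\pm 2s$ and generate $F$, forcing the Galois group to be an elementary abelian $2$-group (equivalently, the lift $\tau$ of $\sigma$ satisfies $\tau^{2}(\sqrt{\gamma})=\sqrt{\gamma}$ and so has order $2$). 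Second, passing from $Dm^{2}=v^{2}+t^{2}$ to $D=B^{2}+C^{2}$ is not literally ``pulling a common factor $A$ out of $(v,t)$'': one needs the classical identity for sums of two squares and the freedom to rescale $\gamma$ by an element of $K^{\times 2}$ to arrange $m$ to divide both $v$ and $t$. You flag this, correctly, as the content of \cite{HARDY001}.
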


Moreover, any field satisfying (2.1) and (2.2) is cyclic quartic extension of $\mathbb{Q},$ and the representation of $F$ is unique in the sense that if we have  another representation, say $F=\mathbb{Q}(\sqrt{A_1(D_1+B_1\sqrt{D_1})}),$ where $A_1, ~B_1,~C_1$ and $D_1$ are integers satisfying the conditions of (2), then $A=A_1, ~B=B_1,~C=C_1$and $D=D_1$.

On the other hand, it is given in the reference \cite{HARDY001}   a table of all the imaginary cyclic quartic fields
$F=\mathbb{Q}\Big(\sqrt{A(D+B\sqrt{D})}\Big)$, where $A, ~B,~C$ and $D$ are integers satisfying the condition (2.2). Now, we can list all imaginary cyclic quartic fields  with
class number one as follows.
\begin{equation*}
\begin{split}
\mbox{Case}\, 1:& ~F=\mathbb{Q}\Big(\sqrt{-(5+2\sqrt{5})}\Big), ~\mbox{where} ~A=-1, B=2, C=1, D=5;\\
\mbox{Case}\, 2:& ~F=\mathbb{Q}\Big(\sqrt{-(13+2\sqrt{13})}\Big), ~\mbox{where}~ A=-1, B=2, C=3, D=13;\\
\mbox{ Case}\, 3:& ~F=\mathbb{Q}\Big(\sqrt{-(2+\sqrt{2})}\Big), ~\mbox{where}~ A=-1, B=1, C=1, D=2;\\
\mbox{ Case}\, 4:& ~F=\mathbb{Q}\Big(\sqrt{-(29+2\sqrt{29})}\Big), ~\mbox{where}~ A=-1, B=2, C=5, D=29;\\
\mbox{ Case}\, 5:& ~F=\mathbb{Q}\Big(\sqrt{-(37+6\sqrt{37})}\Big), ~\mbox{where}~ A=-1, B=6, C=1, D=37;\\
\mbox{Case}\, 6:& ~F=\mathbb{Q}\Big(\sqrt{-(53+2\sqrt{53})}\Big), ~\mbox{where}~ A=-1, B=2, C=7, D=53;\\
\mbox{Case}\, 7:& ~F=\mathbb{Q}\Big(\sqrt{-(61+6\sqrt{61})}\Big), ~\mbox{where}~ A=-1, B=6, C=5, D=61;\\
\end{split}
\end{equation*}

In \cite{HUDSON001}, the integral basis of the cyclic quartic field $F=\mathbb{Q}\Big(\sqrt{A(D+B\sqrt{D})}\Big)$ is given as follows.
\begin{lemma}\quad
	Let $F=\mathbb{Q}\Big(\sqrt{A(D+B\sqrt{D})}\Big)$ be a cyclic quartic extension of $\mathbb{Q},$ where $A,B,C$ and $D$ satisfy the condition (2.2) in Lemma 2.1.  Set
	$$a'=\sqrt{A(D+B\sqrt{D})},\ \ ~b'=\sqrt{A(D-B\sqrt{D})}.$$
	Then an integral basis for $F$  is given as follows.
	\begin{equation*}
	\begin{split}
	(i)&\ \ \ \{1,\sqrt{D},a',b'\}~ if~ D\equiv{0}(mod\, 2);\\
	(ii)&\ \ \ \{1,\frac{1}{2}(1+\sqrt{D}),a',b'\}~
	if~ D\equiv{B}\equiv{1}(mod \, 2);\\
	(iii)&\ \  \{1,\frac{1}{2}(1+\sqrt{D}),\frac{1}{2}(a'+b'),\frac{1}{2}(a'-b')\}\\
	& if~ D\equiv{1}(mod \, 2),B\equiv{0}(mod \, 2),A+B\equiv{3}(mod \, 4);\\
	(iv)&\ \ \{1,\frac{1}{2}(1+\sqrt{D}),\frac{1}{4}(1+\sqrt{D}+a'+b'),\frac{1}{4}(1-\sqrt{D}+a'-b')\}\\
	& if~ D\equiv{1}(mod \, 2),B\equiv{0}(mod \, 2),A+B\equiv{1}(mod4),A\equiv{C}(mod \, 4);\\
	(v)&\ \ \{1,\frac{1}{2}(1+\sqrt{D}),\frac{1}{4}(1+\sqrt{D}+a'-b'),\frac{1}{4}(1-\sqrt{D}+a'+b')\}\\
	& if~ D\equiv{1}(mod \, 2),B\equiv{0}(mod \, 2),A+B\equiv{1}(mod \, 4),A\equiv{-C}(mod \, 4);\\
	\end{split}
	\end{equation*}
\end{lemma}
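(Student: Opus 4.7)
The plan is to verify, in each of the five cases (i)--(v), both that every element of the proposed basis is an algebraic integer and that the $\mathbb{Z}$-module $L$ they span exhausts the full ring of integers $\mathcal{O}_F$. The latter step is handled by a discriminant comparison: one computes $\mathrm{disc}(L) = \det(\mathrm{Tr}_{F/\mathbb{Q}}(\beta_i\beta_j))$, computes the field discriminant $d_F$ independently, and invokes the identity $\mathrm{disc}(L) = [\mathcal{O}_F : L]^2 \cdot d_F$ to force $L = \mathcal{O}_F$.

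Integrality of $1, \sqrt{D}, a', b'$ is immediate: $a'$ has minimal polynomial $x^4 - 2ADx^2 + A^2DC^2$ over $\mathbb{Q}$ (using the identity $D - B^2 = C^2$), and similarly for $b'$. When $D$ is odd, the constraint $D = B^2 + C^2$ together with $D$ squarefree forces $D \equiv 1 \pmod 4$, so $\tfrac{1}{2}(1+\sqrt{D})$ is the standard integral generator of the quadratic subfield $\mathbb{Q}(\sqrt{D})$. For the more delicate elements appearing in cases (iii)--(v), one first chooses compatible sign branches so that $a'b' = AC\sqrt{D}$; then
\[
\left(\tfrac{a'+b'}{2}\right)^2 = \tfrac{AD + AC\sqrt{D}}{2}, \qquad \left(\tfrac{a'-b'}{2}\right)^2 = \tfrac{AD - AC\sqrt{D}}{2},
\]
and combining these with the Galois action of a generator $\sigma$ of $\mathrm{Gal}(F/\mathbb{Q})$ (with $\sigma(\sqrt{D}) = -\sqrt{D}$, $\sigma(a')=b'$ and $\sigma(b')=-a'$) gives the minimal polynomial $x^4 - ADx^2 + \tfrac{A^2B^2D}{4}$ of $\tfrac{1}{2}(a'\pm b')$, which is integral exactly because $B$ is even in cases (iii)--(v). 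For the composite elements $\tfrac{1}{4}(1 \pm \sqrt{D} + a' \pm b')$ in (iv) and (v), one writes out the four Galois conjugates analogously and verifies that the congruences $A+B \equiv 1 \pmod 4$ together with $A \equiv \pm C \pmod 4$ are precisely what is required to clear the denominator $4$ from every coefficient of the resulting characteristic polynomial.

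For the discriminant comparison, the Gram matrix of the trace pairing on the proposed basis is computable in closed form because $\sigma^2$ acts by $-1$ on $\mathrm{span}_{\mathbb{Q}}(a', b')$ and trivially on $\mathrm{span}_{\mathbb{Q}}(1, \sqrt{D})$, so the trace form decomposes into two $2\times 2$ blocks whose determinants are simple monomials in $A, B, C, D$. In parallel, since $F/\mathbb{Q}$ is abelian cyclic of order $4$, the conductor--discriminant formula presents $d_F$ as the product of the conductors of the four characters of $\mathrm{Gal}(F/\mathbb{Q})$, each computable from local data at the primes dividing $2AD$. Matching $\mathrm{disc}(L)$ with $d_F$ case by case completes the argument.

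The main obstacle is the bookkeeping for cases (iv) and (v): the mod-$4$ conditions on $A$, $B$, $C$ simultaneously govern both the integrality of $\tfrac{1}{4}(1 \pm \sqrt{D} + a' \pm b')$ and the $2$-adic part of $d_F$, and keeping the sign choices for $a', b'$ coherent across the integrality check, the trace-form computation and the local analysis at $2$ is the technical heart of the proof.
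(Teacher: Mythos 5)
The paper does not prove Lemma~2.2 at all: it is quoted verbatim from the reference \cite{HUDSON001} (Hudson--Williams, \emph{The integers of a cyclic quartic field}), so there is no in-text proof to compare your argument against. Your outline --- verify integrality of each proposed generator, compute $\mathrm{disc}(L)$ via the trace form, compute $d_F$ independently (you propose the conductor--discriminant formula), and conclude $L=\mathcal{O}_F$ from $\mathrm{disc}(L)=[\mathcal{O}_F:L]^2 d_F$ --- is the standard strategy and is essentially what Hudson and Williams themselves carry out, though they obtain $d_F$ from an explicit discriminant formula for cyclic quartic fields derived in companion work rather than re-deriving it from conductors.

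The intermediate facts you invoke are all correct: the minimal polynomial $x^4-2ADx^2+A^2DC^2$ of $a'$; $D\equiv 1\pmod 4$ when $D$ is odd (from $D=B^2+C^2$ with exactly one of $B,C$ odd); the sign choice making $a'b'=AC\sqrt D$ and the resulting $\bigl(\tfrac{a'\pm b'}{2}\bigr)^2=\tfrac{AD\pm AC\sqrt D}{2}$; and the minimal polynomial $x^4-ADx^2+\tfrac{A^2B^2D}{4}$ of $\tfrac12(a'\pm b')$, integral exactly when $B$ is even. The Galois action $\sigma(\sqrt D)=-\sqrt D$, $\sigma(a')=b'$, $\sigma(b')=-a'$ gives $\sigma^2=-1$ on $\mathrm{span}(a',b')$ as you say. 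What you leave as ``bookkeeping'' is genuinely the crux of cases (iv) and (v): one must actually write down the characteristic polynomial of $\tfrac14(1\pm\sqrt D+a'\pm b')$ and check that the conditions $A+B\equiv 1\pmod 4$ together with $A\equiv\pm C\pmod 4$ (and not merely $B$ even) are what clear the denominators, and simultaneously that these same congruences track the $2$-part of $d_F$ so that the index $[\mathcal{O}_F:L]$ comes out to $1$ rather than a power of $2$. As written your proposal asserts this without exhibiting the computation, so it is a correct plan rather than a complete proof; but the approach is sound, and nothing in it conflicts with the paper, which simply defers to \cite{HUDSON001}.
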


Hence,  the integral bases of Case 2,  of Case 1, Case 3, Case 4, Case 5, Case 7  and of Case 6 are respectively
$$\{1,\sqrt{D},a',b'\};$$
$$\{1,\frac{1}{2}(1+\sqrt{D}),\frac{1}{4}(1+\sqrt{D}+a'-b'),\frac{1}{4}(1-\sqrt{D}+a'+b')\};$$
$$\{1,\frac{1}{2}(1+\sqrt{D}),\frac{1}{4}(1+\sqrt{D}+a'+b'),\frac{1}{4}(1-\sqrt{D}+a'-b')\}.$$

\begin{lemma}\quad
	Let $F=\mathbb{Q}\Big(\sqrt{-(D+B\sqrt{D})}\Big)$ be a cyclic quartic extension of $\mathbb{Q}$ with class number $h(F)=1,$ where $B,C$ and $D$ satisfy the condition (2.2) in Lemma 2.1.
	Set $\beta=i\sqrt{D+B\sqrt{D}}$ and  $F=\mathbb{Q}(\beta).$ Then the following statements  hold.\\
	(i) The minimal polynomial  of $\beta$ over $\mathbb{Q}$ is
	$$f(x)=x^4+2Dx^2+(D^2-DB^2).$$
	(ii) The four conjugated roots of $\beta$ are
	$$\beta_1=\beta=ia,~\beta_2=\bar{\beta}=-ia,~\beta_3=ib,~\beta_4=-ib,$$
	where $a=\sqrt{D+B\sqrt{D}}$ and $b=\sqrt{D-B\sqrt{D}}.$\\
	(iii) The Galois group $Gal(F/\mathbb{Q})$ equals $\langle \sigma\rangle$ with $\sigma$ satisfying
	$$\sigma(\beta_1)=\beta_4,~\sigma(\beta_2)=\beta_3,~\sigma(\beta_3)=\beta_1,~\sigma(\beta_4)=\beta_2.$$
	(iv) The rank $r(U)$ of unit group $U$ of $F$ is $1.$ We denote the fundamental unit by $\xi.$\\
	(v)  In Case 1, Case 3, Case 4, Case 5, Case 7, the field $F$ has the same integral  base, which is
	$$\gamma_0=1,\gamma_1=\frac{1}{2}(1+\sqrt{D}),\gamma_2=\frac{1}{4}(1+\sqrt{D}+\beta-\beta_3),\gamma_3=\frac{1}{4}(1-\sqrt{D}+\beta+\beta_3).$$
	Moreover, the transition matrix  from $1,\beta,\beta^2,\beta^3$
	to $\gamma_0,\gamma_1,\gamma_2,\gamma_3$ is
	\begin{equation}
	M_1=
	\begin{pmatrix} &1 &0 &0 &0\\
	&\frac{B-D}{2B} &0 &-\frac{1}{2B} &0\\
	&\frac{B-D}{4B} &\frac{BC-B^2-D}{4BC} &-\frac{1}{4B} &-\frac{1}{4BC}\\
	&\frac{B+D}{4B} &\frac{BC+B^2+D}{4BC} &\frac{1}{4B} &\frac{1}{4BC}
	\end{pmatrix}
	\end{equation}
	(vi) In Case 2 and Case 6, the field $F$ has the same integral base, which is
	$$\gamma_0=1,\gamma_1=\frac{1}{2}(1+\sqrt{D}),\gamma_2=\frac{1}{4}(1+\sqrt{D}+\beta+\beta_3),\gamma_3=\frac{1}{4}(1-\sqrt{D}+\beta-\beta_3).$$
	Moreover, the transition matrix  from $1,\beta,\beta^2,\beta^3$
	to $\gamma_0,\gamma_1,\gamma_2,\gamma_3$ is
	\begin{equation}
	M_2=
	\begin{pmatrix} &1 &0 &0 &0\\
	&\frac{B-D}{2B} &0 &-\frac{1}{2B} &0\\
	&\frac{B-D}{4B} &\frac{BC+B^2+D}{4BC} &-\frac{1}{4B} &\frac{1}{4BC}\\
	&\frac{B+D}{4B} &\frac{BC-B^2-D}{4BC} &\frac{1}{4B} &-\frac{1}{4BC}
	\end{pmatrix}
	\end{equation}
\end{lemma}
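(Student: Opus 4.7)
The proof will proceed part by part, with parts (i)--(iv) following by direct and essentially standard arguments, and parts (v)--(vi) reducing to Lemma 2.2 together with a coordinate computation.

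For (i), I would simply square the defining equation $\beta^{2}=-(D+B\sqrt{D})$, obtaining $\beta^{2}+D=-B\sqrt{D}$, and squaring once more to get $(\beta^{2}+D)^{2}=B^{2}D$, which expands to the claimed quartic $f(x)=x^{4}+2Dx^{2}+(D^{2}-DB^{2})$. Since $F=\mathbb{Q}(\beta)$ has degree $4$ over $\mathbb{Q}$ by hypothesis, $f$ is automatically the minimal polynomial. For (ii), solve $f(x)=0$ as a biquadratic in $x^{2}$: $x^{2}=-D\pm B\sqrt{D}$. The condition $D=B^{2}+C^{2}$ with $C\ge 1$ forces $D>B^{2}$, so both $D\pm B\sqrt{D}>0$, and the four roots are exactly $\pm ia,\pm ib$ with $a,b$ as stated.

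For (iii), since $F/\mathbb{Q}$ is cyclic quartic, $\mathrm{Gal}(F/\mathbb{Q})=\langle\sigma\rangle$ with $\sigma$ of order $4$. The relation $\sqrt{D}=-(\beta^{2}+D)/B\in F$ shows $\mathbb{Q}(\sqrt{D})$ is the unique quadratic subfield, fixed by $\sigma^{2}$. Hence $\sigma$ must send $\sqrt{D}\mapsto -\sqrt{D}$, which forces $a\mapsto\pm b$; combining with the identity $ab=\sqrt{D(D-B^{2})}=C\sqrt{D}$, one checks that choosing $\sigma(ia)=-ib$ yields an order-$4$ automorphism and gives the specified four-cycle. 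Part (iv) is immediate from Dirichlet's unit theorem: $F$ is totally imaginary of degree $4$, so $r_{1}=0$, $r_{2}=2$, and $r(U)=r_{1}+r_{2}-1=1$.

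For (v) and (vi), I would first verify, using the integer parameters in each of Cases 1--7 listed after Lemma 2.1, which clause of Lemma 2.2 applies, since $A=-1$ is fixed and one only needs to read off the residues of $B,C,D,A+B$ modulo $2$ and $4$. Identifying $a'=\beta=ia$ and $b'=\beta_{3}=ib$ then yields the stated explicit integral bases. The remaining task is to verify the transition matrices $M_{1},M_{2}$. The row for $1$ is trivial; for the row of $\gamma_{1}=\tfrac{1}{2}(1+\sqrt{D})$, substitute $\sqrt{D}=-(\beta^{2}+D)/B$ to get the coefficients $(\tfrac{B-D}{2B},0,-\tfrac{1}{2B},0)$ directly. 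The main computation is the expression of $\beta_{3}$ in the power basis: using $\beta\beta_{3}=\pm C\sqrt{D}$, one obtains $\beta_{3}=\pm C(\beta^{2}+D)/(B\beta)$, and the minimal polynomial gives $1/\beta=-(2D\beta+\beta^{3})/(DC^{2})$, so
\begin{equation*}
\beta_{3}=\pm\frac{(C^{2}-2D)\beta-\beta^{3}}{BC}=\mp\frac{(D+B^{2})\beta+\beta^{3}}{BC},
\end{equation*}
where the sign is determined consistently with the chosen square roots and the convention of (ii). Plugging this into $\gamma_{2}$ and $\gamma_{3}$ and collecting coefficients of $1,\beta,\beta^{2},\beta^{3}$ yields the entries of $M_{1}$ in Cases 1, 3, 4, 5, 7, and of $M_{2}$ in Cases 2, 6; the two matrices differ precisely because those two families of cases use $\beta\mp\beta_{3}$ versus $\beta\pm\beta_{3}$ in Lemma 2.2.

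The main obstacle, and really the only nontrivial piece of bookkeeping, is the sign choice in expressing $\beta_{3}$ via $\beta$ and its compatibility with the two different integral bases coming from clauses (iv) and (v) of Lemma 2.2. I would pin this down once and for all by checking a small numerical instance (e.g.\ Case 1 with $B=2, D=5$, where $\beta_{3}$ becomes $-(9\beta+\beta^{3})/2$) and then observe that the signs propagate uniformly through the remaining cases grouped exactly as in the statement.
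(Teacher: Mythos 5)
Your proposal is correct and, for the part the paper actually proves in detail, follows a genuinely different route. The paper's proof of (v)--(vi) finds the coordinates of $\beta_{3}$ in the power basis by the undetermined-coefficient ansatz $\beta_{3}=x_{0}+x_{1}\beta+x_{2}\beta^{2}+x_{3}\beta^{3}$, separating real and imaginary parts, squaring to obtain a nonlinear system in $x_{1},x_{3}$, solving for the ratio $x_{1}/x_{3}$ from a quadratic, and eliminating one of its two roots by substitution. You instead exploit the product identity $\beta\beta_{3}=-ab=-C\sqrt{D}$ together with the inverse formula $1/\beta=-(2D\beta+\beta^{3})/(DC^{2})$ coming from the minimal polynomial; this is a shorter, more structural computation that avoids the squaring step entirely. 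You also supply arguments for (i)--(iv), which the paper dismisses as ``easy'' without proof, and those arguments (squaring twice, solving the biquadratic, using the unique quadratic subfield plus the order-4 constraint, and Dirichlet's unit theorem with $r_{1}=0$, $r_{2}=2$) are the standard correct ones.

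The only subtlety, which you correctly flag as the crux, is the sign of $\beta_{3}$ in the power basis. Your numerical spot check in Case~1 gives $\beta_{3}=-(9\beta+\beta^{3})/2$, i.e. $\beta_{3}=-\big((D+B^{2})\beta+\beta^{3}\big)/(BC)$, whereas the paper asserts the opposite sign and says ``we can check'' without showing the check; a direct verification (evaluate $x_{1}a-x_{3}a^{3}$ with $a^{2}=D+B\sqrt D$ and compare with $b=C\sqrt{D}/a>0$) confirms your sign is the one making $\beta_{3}=ib$ with $b>0$. Flipping this sign swaps the roles of the two transition matrices, so either the paper's $M_{1},M_{2}$ are interchanged relative to the cases, or the paper is silently using the other branch of the square root $b$; the underlying integral bases are of course the same set of numbers. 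It would strengthen your write-up to pin down the sign algebraically (as in the verification just sketched) rather than rely only on the single numerical instance plus the claim that the signs ``propagate uniformly,'' but the idea is right and the rest of the argument goes through.
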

\begin{proof}\quad
	The proofs of (i),(ii),(iii) and (iv) are easy. So we only prove (v) and (vi).

	We will express $\beta_3=ib$ by $1,\beta,\beta^2,\beta^3.$
	Assume that
	\begin{equation}
	\begin{split}
	\beta_3=ib=x_0+x_1ia+x_2(ia)^2+x_3(ia)^3,
	\end{split}
	\end{equation}
	where $a=\sqrt{D+B\sqrt{D}}$, $b=\sqrt{D-B\sqrt{D}}$ and $x_0,x_1,x_2,x_3 \in{\mathbb{Q}}.$ Then the following equations  hold:
	\begin{equation}
	\begin{split}
	x_0-a^2x_2=0
	\end{split}
	\end{equation}
	\begin{equation}
	\begin{split}
	b-ax_1+a^3x_3=0
	\end{split}
	\end{equation}	
	From (2.6), we have $x_0=x_2=0.$ However, from (2.7), we get
	\begin{equation*}
	\begin{split}
	b^2&=a^2(x_1-a^2x_3)^2\\
	&=a^2(x^2_1+a^4x_3^2-2a^2x_1x_3)\\
	&=(D+B\sqrt{D})[x_1^2+(D+B\sqrt{D})^2x_3^2-2(D+B\sqrt{D})x_1x_3]\\
	&=(D+B\sqrt{D})[x_1^2+(D^2+B^2D)x_3^2-2Dx_1x_3+(2BDx_3^2-2Bx_1x_3)\sqrt{D}]\\
	&=[Dx_1^2+(D^3+3B^2D^2)x_3^2-2(D^2+B^2D)x_1x_3]\\
	&+[Bx_1^2+(3BD^2+B^3D)x_3^2-4BDx_1x_3]\sqrt{D}.
	\end{split}
	\end{equation*}
	By comparing with the both sides of the equality, we can get the system of equations on $x_1$ and $x_3$
	\begin{equation}
	\begin{split}
	D=Dx_1^2+(D^3+3B^2D^2)x_3^2-2(D^2+B^2D)x_1x_3
	\end{split}
	\end{equation}
	\begin{equation}
	\begin{split}
	-B=Bx_1^2+(3BD^2+B^3D)x_3^2-4BDx_1x_3.
	\end{split}
	\end{equation}
	
	i.e.
	\begin{equation}
	\begin{split}
	1=x_1^2+(D^2+3B^2D)x_3^2-2(D+B^2)x_1x_3
	\end{split}
	\end{equation}
	\begin{equation}
	\begin{split}
	-1=x_1^2+(3D^2+B^2D)x_3^2-4Dx_1x_3.
	\end{split}
	\end{equation}
	Adding the two equations, we have
	\begin{equation}
	\begin{split}
	x_1^2+(2D^2+2B^2D)x_3^2-(3D+B^2)x_1x_3=0.
	\end{split}
	\end{equation}
	If $x_3=0$, clearly we have $x_1=0,$ impossible. Thus  $\frac{x_1}{x_3}$ is a root of the  equation:
	\begin{equation}
	\begin{split}
	x^2-(3D+B^2)x+2D(D+B^2)=0.
	\end{split}
	\end{equation}
	Clearly, $2D, D+B^2$ are the two roots of (2.13), so
	$$\frac{x_1}{x_3}=2D\ \ \mbox{or} \ \ \frac{x_1}{x_3}=D+B^2.$$
	
	If $x_1=2Dx_3,$ then from  (2.10), we can get that $(D^2-B^2D)x_3^2=1.$ So we have
	$$x_3=\pm\frac{\sqrt{D}}{CD},\ \ \ x_1=\pm\frac{2\sqrt{D}}{C}.$$
	However, putting these expressions in  (2.11), we get immediately a contradiction.
	Hence, we must have $x_1=(D+B^2)x_3$. Therefore from (2.11) we get
	$$-1=(B^2+D)^2x_3^2+(3D^2+B^2D)x_3^2-4D(D+B^2)x_3^2=-B^2C^2x_3^2.$$
	Thus $x_3=\pm\frac{1}{BC}.$
	We can check that $x_0=x_2=0,x_1=\frac{D+B^2}{BC}$ and $x_3=\frac{1}{BC}$ satisfy the equation (2.5), which means
	$$\beta_3=\frac{D+B^2}{BC}\cdot \beta+\frac{1}{BC}\cdot \beta^3.$$
	
	Note that  $\sqrt{D}=\frac{-D}{B}-\frac{\beta^2}{B}.$ Then, in   Case 1, Case 3, Case 4, Case 5, Case 7,  we can express $\gamma_0,\gamma_1,\gamma_2,\gamma_3$ by $1,\beta,\beta^2,\beta^3$ as follows.
	\begin{equation*}
	\begin{pmatrix} &\gamma_0\\&\gamma_1\\&\gamma_2\\&\gamma_3 \end{pmatrix}=
	\begin{pmatrix} 1 &0 &0 &0\\
	\frac{B-D}{2B} &0 &-\frac{1}{2B} &0\\
	\frac{B-D}{4B} &\frac{BC-B^2-D}{4BC} &-\frac{1}{4B} &-\frac{1}{4BC}\\
	\frac{B+D}{4B} &\frac{BC+B^2+D}{4BC} &\frac{1}{4B} &\frac{1}{4BC}
	\end{pmatrix}
	\begin{pmatrix} 1\\ \beta\\ \beta^2\\ \beta^3 \end{pmatrix}
	=M_1
	\begin{pmatrix} 1\\ \beta\\ \beta^2\\ \beta^3 \end{pmatrix}
	\end{equation*}
	Similarly, in Case 2 and Case 6,  we  get
	\begin{equation*}
	\begin{pmatrix} \gamma_0\\ \gamma_1\\ \gamma_2\\ \gamma_3 \end{pmatrix}=
	\begin{pmatrix} 1 &0 &0 &0\\
	\frac{B-D}{2B} & 0 &-\frac{1}{2B} &0\\
	\frac{B-D}{4B} &\frac{BC+B^2+D}{4BC} &-\frac{1}{4B} &\frac{1}{4BC}\\
	\frac{B+D}{4B} &\frac{BC-B^2-D}{4BC} &\frac{1}{4B} &-\frac{1}{4BC}
	\end{pmatrix}
	\begin{pmatrix} 1\\ \beta\\ \beta^2\\ \beta^3 \end{pmatrix}
	=M_2
	\begin{pmatrix} 1\\ \beta\\ \beta^2\\ \beta^3 \end{pmatrix}.
	\end{equation*}
\end{proof}

\end{section}

\begin{section}{The tame kernel of an imaginary cyclic  quartic field}

\begin{subsection}{Lemmas}
 \begin{lemma}
 	Let $F=\mathbb{Q}(\sqrt{-(D+B\sqrt{D}}))$ be a cyclic quartic field with the class number $h(F)=1$ and let $\beta=ia$ with $a=\sqrt{D+B\sqrt{D}}.$ Then, for any prime ideal $\mathcal{P}$ of $F$, there exists an element $\alpha\in{\mathcal{O}_F}$ satisfying\\
 	(i) $\mathcal{P}=(\alpha)$;\\
 	(ii) $|\sigma(\xi)|\le{\Big|\frac{\sigma(\alpha)}{\alpha}\Big|\le|\xi|},$ where $\xi$ is the foundanment unit of $F.$
 	Moreover, we have
 	$$\frac{|N(\alpha)|^{\frac{1}{4}}}{|\xi|^{\frac{1}{2}}}\le|\alpha|\le\frac{|N(\alpha)|^{\frac{1}{4}}}{|\sigma(\xi)|^{\frac{1}{2}}}.$$

 \end{lemma}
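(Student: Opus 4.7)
The plan is to use $h(F)=1$ to produce an initial generator of $\mathcal{P}$ and then translate it by a power of the fundamental unit $\xi$ so as to balance the two archimedean factor sizes of $\alpha$. The argument is a one-dimensional Dirichlet-type pigeonhole and relies on two observations from Lemma 2.3: first, $\sigma^{2}$ acts as complex conjugation (immediate from Lemma 2.3(ii)--(iii), since $\sigma^{2}(\beta_{1})=\beta_{2}=-ia=\overline{\beta_{1}}$), and second, the unit group has rank one, so $\mathcal{O}_{F}^{\times}=\mu(F)\cdot\langle\xi\rangle$.

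Since $\mathcal{P}$ is principal by $h(F)=1$, write $\mathcal{P}=(\alpha_{0})$; every other generator has the form $\alpha=\zeta\xi^{n}\alpha_{0}$ with $\zeta\in\mu(F)$ and $n\in\mathbb{Z}$. Normalise $\xi$ so that $|\xi|\ge 1$. Because $\sigma^{2}$ is complex conjugation, the norm on $F$ factors as $|N(\eta)|=|\eta|^{2}|\sigma(\eta)|^{2}$ for every $\eta\in F$, and in particular $|\xi|^{2}|\sigma(\xi)|^{2}=|N(\xi)|=1$, which gives $|\sigma(\xi)|=|\xi|^{-1}$. Since $|\zeta|=1$,
$$\left|\frac{\sigma(\alpha)}{\alpha}\right|=\left|\frac{\sigma(\alpha_{0})}{\alpha_{0}}\right|\cdot|\xi|^{-2n},$$
so condition (ii) is equivalent to requiring that $n$ lie in the closed interval
$$\left[\frac{L-1}{2},\;\frac{L+1}{2}\right],\qquad L:=\log_{|\xi|}\left|\frac{\sigma(\alpha_{0})}{\alpha_{0}}\right|.$$
This interval has length $1$ and therefore contains an integer; any such $n$ (together with, say, $\zeta=1$) yields an $\alpha$ verifying both (i) and (ii).

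For the closing two-sided estimate on $|\alpha|$, the same identity $|N(\alpha)|=|\alpha|^{2}|\sigma(\alpha)|^{2}$ may be rearranged as
$$|\alpha|=\frac{|N(\alpha)|^{1/4}}{|\sigma(\alpha)/\alpha|^{1/2}},$$
and substituting the bounds from (ii) immediately gives $|N(\alpha)|^{1/4}/|\xi|^{1/2}\le|\alpha|\le|N(\alpha)|^{1/4}/|\sigma(\xi)|^{1/2}$. No step seems to be a genuine obstacle; the only points where one must be attentive are the normalisation $|\xi|\ge 1$ (which is always available for a fundamental unit of a rank-one unit group) and the identification of $\sigma^{2}$ with complex conjugation in the fixed embedding, both of which are immediate from Lemma 2.3.
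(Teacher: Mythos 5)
Your proposal is correct and takes essentially the same approach as the paper: start from a generator supplied by $h(F)=1$, then rescale by a power of the fundamental unit $\xi$ so that $|\sigma(\alpha)/\alpha|$ lands in the interval $[\,|\sigma(\xi)|,|\xi|\,]$. The paper does the rescaling by an explicit three-case analysis (already in range; multiply by $\xi^{k}$; divide by $\xi^{k}$), whereas you phrase it as a length-one interval pigeonhole after taking $\log_{|\xi|}$ — a cleaner packaging of the same idea, with the same use of $|N(\eta)|=|\eta|^{2}|\sigma(\eta)|^{2}$ (via $\sigma^{2}=$ complex conjugation) and $|\xi||\sigma(\xi)|=1$ to obtain the final two-sided bound on $|\alpha|$.
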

 \begin{proof}
Because the class number $h_F$ is $1,$ the prime ideal $\mathcal{P}$ of $F$ is a principal ideal, i.e. $\mathcal{P}=(y)$ for some $y\in{\mathcal{O}_{F}}.$\\

i). If $|\sigma(\xi)|\le\Big|\frac{\sigma(y)}{y}\Big|\le|\xi|,$
let $\alpha=y.$ Then the lemma is true.

ii). If $\Big|\frac{\sigma(y)}{y}\Big|>|\xi|,$
since $\Big|\frac{\sigma(\xi)}{\xi}\Big|<1,$ there is a positive integer $k$ satisfying
\begin{equation}
\begin{split}
\Big|\frac{\sigma(y)}{y}\Big|\Big|\frac{\sigma(\xi)}{\xi}\Big|^{k}\le|\xi|<\Big|\frac{\sigma(y)}{y}\Big|\Big|\frac{\sigma(\xi)}{\xi}\Big|^{k-1}.
\end{split}
\end{equation}
Let $\alpha=y\xi^k.$ Then, we get  $\Big|\frac{\sigma(\alpha)}{\alpha}\Big|\le|\xi|.$ However $\Big|\frac{\sigma(\alpha)}{\alpha}\Big|=|\sigma(\xi)|\Big|\frac{\sigma(y\xi^{k-1})}{y\xi^{k-1}}\Big|>|\sigma(\xi)||\xi|>|\sigma(\xi)|.$

iii). If $\Big|\frac{\sigma(y)}{y}\Big|<|\sigma(\xi)|,$
as  in ii), there is a positive integer $k$ such that
\begin{equation}
\begin{split}
 \Big|\frac{\sigma(y)}{y}\Big|\Big|\frac{\xi}{\sigma(\xi)}\Big|^{k-1}<|\sigma(\xi)|\leq \Big|\frac{\sigma(y)}{y}\Big|\Big|\frac{\xi}{\sigma(\xi)}\Big|^{k}.
\end{split}
\end{equation}
Let $\alpha=\frac{y}{\xi^k}.$ Thus by (14),we have
\begin{equation*}
\begin{split}
|\sigma(\xi)|\le\Big|\frac{\sigma(\alpha)}{\alpha}\Big|\le|\xi|.
\end{split}
\end{equation*}
So
\begin{equation*}
\begin{split}
|\sigma(\xi)||\alpha^2|\le|\sigma(\alpha)||\alpha|\le|\xi||\alpha|^2.
\end{split}
\end{equation*}
Hence
\begin{equation*}
\begin{split}
|\sigma(\xi)|^2|\alpha^4|\le|N(\alpha)|\le|\xi|^2|\alpha|^4.
\end{split}
\end{equation*}
Therefore
\begin{equation}
\begin{split}
\frac{|N(\alpha)|^{\frac{1}{4}}}{|\xi|^{\frac{1}{2}}}\le|\alpha|\le\frac{|N(\alpha)|^{\frac{1}{4}}}{|\sigma(\xi)|^{\frac{1}{2}}}.
\end{split}
\end{equation}
\end{proof}
We denote $[t]$ to the nearest integer number to $t$. Let $\{t\}=t-[t].$ So $\{t\}\in{[-\frac{1}{2},\frac{1}{2}]}.$
\begin{lemma}
	For any $0\neq \alpha, x\in{\mathcal{O}_F}$, there is a $y\in{\mathcal{O}_F}$ such that
$$|x-\alpha y|\le c_1|\alpha|,$$
$$|\sigma(x-\alpha y)|\le c_2|\sigma(\alpha)|,$$
where $c_{1},c_2$ are constants depending  only on the field $F,$ i.e., on $A,B,C$ and $D.$ So
 $$N(x-\alpha y)\le c_1^2c_2^2N(\alpha).$$
\end{lemma}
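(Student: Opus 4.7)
The plan is to prove this by a standard coordinate-rounding argument using the integral basis of $F$ supplied by Lemma 2.3. Since $\{\gamma_0,\gamma_1,\gamma_2,\gamma_3\}$ is a $\mathbb{Z}$-basis of $\mathcal{O}_F$ and a $\mathbb{Q}$-basis of $F$, I can write
\[
\frac{x}{\alpha}=r_0\gamma_0+r_1\gamma_1+r_2\gamma_2+r_3\gamma_3\qquad(r_i\in\mathbb{Q}).
\]
Set $y=[r_0]\gamma_0+[r_1]\gamma_1+[r_2]\gamma_2+[r_3]\gamma_3\in\mathcal{O}_F$, where $[r_i]$ denotes the nearest integer. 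Then
\[
x-\alpha y=\alpha\bigl(\{r_0\}\gamma_0+\{r_1\}\gamma_1+\{r_2\}\gamma_2+\{r_3\}\gamma_3\bigr)
\]
with $|\{r_i\}|\le\tfrac{1}{2}$.

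First I would take absolute values in the identity embedding and apply the triangle inequality:
\[
|x-\alpha y|\le|\alpha|\sum_{i=0}^{3}|\{r_i\}|\,|\gamma_i|\le\frac{|\alpha|}{2}\sum_{i=0}^{3}|\gamma_i|,
\]
so one may take $c_1=\tfrac{1}{2}\sum_{i}|\gamma_i|$, a constant depending only on $B,C,D$ through the explicit formulas for the $\gamma_i$ in Lemma 2.3. Applying $\sigma$ and repeating the same estimate yields
\[
|\sigma(x-\alpha y)|\le|\sigma(\alpha)|\sum_{i=0}^{3}|\{r_i\}|\,|\sigma(\gamma_i)|\le\frac{|\sigma(\alpha)|}{2}\sum_{i=0}^{3}|\sigma(\gamma_i)|,
\]
so $c_2=\tfrac{1}{2}\sum_{i}|\sigma(\gamma_i)|$ also depends only on $F$.

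For the norm estimate I would use that $F$ is imaginary cyclic quartic with $\mathrm{Gal}(F/\mathbb{Q})=\langle\sigma\rangle$, and from the description of $\sigma$ in Lemma 2.3(iii) one checks $\sigma^2(\beta)=-\beta=\bar\beta$, i.e.\ $\sigma^2$ coincides with complex conjugation on $F$. Hence for any $u\in F$ one has $|\sigma^2(u)|=|u|$ and $|\sigma^3(u)|=|\sigma(u)|$, so
\[
|N(u)|=\prod_{k=0}^{3}|\sigma^k(u)|=|u|^2\,|\sigma(u)|^2.
\]
Applying this to $u=x-\alpha y$ and using the two bounds just established gives
\[
|N(x-\alpha y)|=|x-\alpha y|^2\,|\sigma(x-\alpha y)|^2\le c_1^2c_2^2\,|\alpha|^2|\sigma(\alpha)|^2=c_1^2c_2^2\,|N(\alpha)|,
\]
which is the third inequality.

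There is really no hard step here; the whole argument is just coordinate-wise rounding in the integral basis, and the only point that requires a moment's care is the observation that $\sigma^2$ equals complex conjugation on $F$, which is what lets the absolute norm be written as $|u|^2|\sigma(u)|^2$ and therefore lets the two separate bounds multiply into a single bound on $|N(x-\alpha y)|$. The explicit values of $c_1,c_2$ one would get (from the formulas for $\gamma_i$ in Lemma 2.3(v)--(vi)) are not needed for the statement, only that they are finite constants depending on $A,B,C,D$.
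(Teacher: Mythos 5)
Your proof is correct, and it follows the same opening move as the paper: write $x/\alpha$ in the integral basis, round coordinate-wise, and reduce to bounding $z'=\sum_i\{r_i\}\gamma_i$ with $|\{r_i\}|\le\tfrac12$. Where you diverge is in how the bound on $|z'|$ (and $|\sigma(z')|$) is produced. You invoke the triangle inequality and take $c_1=\tfrac12\sum_i|\gamma_i|$, $c_2=\tfrac12\sum_i|\sigma(\gamma_i)|$. The paper instead expresses $|z'|^2$ as an explicit positive-definite quadratic form $h_1(z_0,\dots,z_3)=(z_0,\dots,z_3)MH_1M^{\mathrm T}(z_0,\dots,z_3)^{\mathrm T}$ in the fractional coordinates, observes (by convexity) that its maximum on the cube $[-\tfrac12,\tfrac12]^4$ is attained at a vertex, and sets $c_1'=\max\{h_1(z): z_i=\pm\tfrac12\}$, $c_1=(c_1')^{1/2}$ (and similarly for $c_2$ via $H_2$). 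The upshot is the same lemma, but the paper's $c_1,c_2$ are the exact suprema and hence typically strictly smaller than your triangle-inequality constants. For the statement of the lemma this does not matter, but it does matter downstream: $c_1,c_2$ enter multiplicatively into the theoretical bounds of Lemmas 3.4 and 3.5 that determine the range of primes the program must traverse, so the sharper constants are what keeps the computation feasible. Your remark that $\sigma^2$ is complex conjugation, giving $|N(u)|=|u|^2|\sigma(u)|^2$, is a useful explicit justification of the final norm inequality that the paper leaves implicit; it follows from Lemma 2.3(iii) since $\sigma^2(\beta_1)=\beta_2=\overline{\beta_1}$.
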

\begin{proof}
	Assume that $\frac{x}{\alpha}=k_0\gamma_0+k_1\gamma_1+k_2\gamma_2+k_3\gamma_3$ where $\gamma_0,\gamma_1,\gamma_2,\gamma_3$ are the integral basis of $F$ and $k_i\in{\mathbb{Q}},i=0,1,2,3.$ Let
$$y=[k_0]\gamma_0+[k_1]\gamma_1+[k_2]\gamma_2+[k_3]\gamma_3\in{\mathcal{O}_F}.$$
We will show that $y$ satisfies the requirement.

Suppose that
	\begin{equation*}
		\begin{split}
		z=&x-y\alpha=\Big(\sum_{i=0}^{3}{k_i\gamma_{i}}\Big)\alpha-\Big(\sum_{i=0}^{3}{[k_i]\gamma_{i}}\Big)\alpha=\Big(\sum_{i=0}^{3}{\{k_i\}\gamma_{i}}\Big)\alpha=
\Big(\sum_{i=0}^{3}{z_i\gamma_{i}}\Big)\alpha,
		\end{split}
	\end{equation*}
where $z_i=\{k_i\}\in[-\frac{1}{2},\frac{1}{2}]\cap \mathbb{Q}.$ 	

Let $M=M_1$ or $M_2,$ and let$z'=\sum_{i=0}^{3}{z_i\gamma_{i}}.$
We can compute the maximal value of $|z|.$
Let $g=|z|^2$. Then
\begin{equation*}
\begin{split}
g&=|z|^2=|z'|^2|\alpha|^2\\
 &=(z_0,z_1,z_2,z_3)M
 \begin{pmatrix}
 1 &-ia &-a^2 &ia^3\\
 ia &a^2 &-ia^3 &-a^4\\
 -a^2 &ia^3 &a^4 &-ia^5\\
 -ia^3 &-a^4 &ia^5 &a^6
 \end{pmatrix}
 M^{T}
 \begin{pmatrix}
 z_0\\
 z_1\\
 z_2\\
 z_3\\
 \end{pmatrix}|\alpha|^2\\
 &=(z_0,z_1,z_2,z_3)M
 H_1M^{T}
 \begin{pmatrix}
 z_0\\
 z_1\\
 z_2\\
 z_3
 \end{pmatrix}|\alpha|^2,
\end{split}
\end{equation*}
where
$$H_1:=\begin{pmatrix}
 1 &0 &-a^2 &0\\
 0 &a^2 &0 &-a^4\\
 -a^2 &0 &a^4 &0\\
 0 &-a^4 &0 &a^6
 \end{pmatrix}.$$

Let
\begin{equation*}
\begin{split}
 h_1(z_0,z_1,z_2,z_3):&=(z_0,z_1,z_2,z_3)M
 H_1M^{T}
 \begin{pmatrix}
 z_0\\
 z_1\\
 z_2\\
 z_3
 \end{pmatrix}.
\end{split}
\end{equation*}
By pari/gp, we can check that the values of $h_1(z_0,z_1,z_2,z_3)$ on those stationary
are zero. Thus $h_1(z_0,z_1,z_2,z_3)$ reaches its maximal value  on the boundary.

Hence, for any $A,B,C$ and $D,$ we have
$$|x-y\alpha|= |z|\le |z'||\alpha| \le c_{1}'^{\frac{1}{2}}|\alpha|,$$
where
$$c_{1}'=\mbox{max}\{h(z_0,z_1,z_2,z_3): z_i=-\frac{1}{2}~\mbox{or} ~\frac{1}{2},i=0,1,2,3\}.$$

Similarly, let $$h_2 (z_0,z_1,z_2,z_3)=(z_0,z_1,z_2,z_3)M
 H_2M^{T}
 \begin{pmatrix}
 z_0\\
 z_1\\
 z_2\\
 z_3
 \end{pmatrix}.$$
 with
 $$H_2:=\begin{pmatrix}
 1 &0 &-b^2 &0\\
 0 &b^2 &0 &-b^4\\
 -b^2 &0 &b^4 &0\\
 0 &-b^4 &0 &b^6
 \end{pmatrix}.$$
 Then we have
$$|\sigma(x-y\alpha)|=|\sigma(z)|\le |\sigma(z')||\sigma(\alpha)|\le c_{2}'^{\frac{1}{2}}|\sigma(\alpha)|,$$
where
$$c_{2}'=\mbox{max}\{h_2 (z_0,z_1,z_2,z_3): z_i=-\frac{1}{2}~\mbox{or} ~\frac{1}{2},i=0,1,2,3\}.$$

However
both
$$|z'|^2=(z_0,z_1,z_2,z_3)M
 H_1M^{T}
 \begin{pmatrix}
 z_0\\
 z_1\\
 z_2\\
 z_3
 \end{pmatrix}
 $$
 and
$$
|\sigma(z')|^2=(z_0,z_1,z_2,z_3)M
 H_2M^{T}
 \begin{pmatrix}
 z_0\\
 z_1\\
 z_2\\
 z_3
\end{pmatrix}
$$
are  positive definite quadratic forms determined by $a>0$ and $b>0$.
So  $|z'|$ and $|\sigma(z')|$ reach maximal value at same point.
Let $c_i= {c_{i}'}^{\frac{1}{2}}, i=1,2.$  Then
the proof is completed.
\end{proof}
\end{subsection}

\begin{subsection}{Construction of $W_m,C_m,G_m$}
Let $F=\mathbb{Q}\Big(\sqrt{-(D+B\sqrt{D}})\Big)$ be a cyclic quartic field with the class number $h(F)=1,$ and
let $S_{m+1}=\{v_1,v_2,\cdots,v_{m+1}\}$, where $v_{i}$ corresponds to the prime ideal $\mathcal{P}_i:=\mathcal{P}_{v_i}$ for
$i=1,2,\cdots,m+1.$
In order to use Theorem 1.1 to compute the tame kernel $K_2\mathcal{O}_F,$ we construct $W_m,C_m$ and $G_m$ as follows.

Firstly, by Lemma 3.1, for each $i$ there exists an  $\alpha_i\in{\mathcal{O}_F}$ satisfying
$\mathcal{P}_{i}=(\alpha_i)$ and $|\sigma(\xi)|\le \Big|\frac{\sigma(\alpha_i)}{\alpha_i}\Big|\le |\xi|,$ where $i=1,2,\cdots,m+1.$
Thus we define
$$W_{m}=\{\alpha_1,\alpha_2,\cdots,\alpha_m\}\bigcup \{-1,\xi\}.$$
Clearly, from the construction of $W_m,$ we know immediately  that $U_m$ can be generated by $W_m.$

Secondly, let
\begin{equation*}
\begin{split}
C'_{m}=\{c\in{\mathcal{O}_K}: \ |c|\le c_1|\alpha_{m+1}|, |\sigma(c)|\le c_2|\sigma(\alpha_{m+1})|\}.
\end{split}
\end{equation*}
Then the set $C_{m}$ defined to be a subset of  $C'_{m}$ such that $1\in{C_{m}},~0\notin{C_{m}}$ and $c_1-c_2\notin
\mathcal{P}_{m+1}$ for any $c_1,c_2\in{C_m}.$ Clearly, we have  $1\in C_m\cap Ker\beta\subseteq U'_1,$ which implies that  condition (iii) of Theorem 1.1 is satisfied. In the following, we will prove that there must exist a $C_m$ which satisfies condition I and condition II further.

Finally, let $\delta:=\big(\frac{2}{\pi}\big)^\frac{1}{2}|D|^\frac{1}{8}$ and define
\begin{equation*}
\begin{split}
G'_{m}=\Big\{g\in{\mathcal{O}_K}: |g|\le \delta N(\mathcal{P}_{m+1})^{\frac{1}{8}}, |\sigma(g)|\le \delta N(\sigma(\mathcal{P}_{m+1}))^{\frac{1}{8}}\Big\}.
\end{split}
\end{equation*}
When $N(\mathcal{P}_{m+1})>\delta^8,$ by GTT theorem and the proof of Lemma 1.2 in [10], there exists a subset $G_m\subseteq U_m$ with $G_m\subseteq G'_m$ such that $k^{*}_v$ can be generated by $\beta(G_m),$ which means the second part of condition (ii) in Theorem 1.1 is satisfied.
\end{subsection}

\begin{subsection}{Theoretical bounds}
\begin{subsubsection}{The bounds in imaginary cyclic quartic field case}

 The following lemma is very helpful.
 \begin{lemma}\quad
 	Suppose that the elements $a,b\in\mathcal{O}_F \bigcap U_{m}$ satisfy the conditions $a\equiv b(mod\, \mathcal{P}_{m+1})$ and $N(a-b)<N^2(\mathcal{P}_{m+1}).$ Then $\frac{a}{b}\in{U'_1}.$
 \end{lemma}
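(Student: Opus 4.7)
The plan is to analyze the prime ideal factorization of $(a-b)$ in $\mathcal{O}_F$ and show that, outside $S_m$, the only prime dividing $a-b$ is $\mathcal{P}_{m+1}$, appearing to the first power. Once this is established, we can write $a-b = \pi_{m+1} c$ with $c\in U_m$, and the lemma follows immediately from the definition of $U_1'$ as the group generated by $(1+\pi_{m+1}U_m)\cap U_m$.

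First I would record the obvious constraints: the hypothesis $a\equiv b\pmod{\mathcal{P}_{m+1}}$ gives $v_{m+1}(a-b)\ge 1$, and $a,b\in U_m$ gives $v_j(a)=v_j(b)=0$ for every $j>m$, whence $v_j(a-b)\ge 0$ for all $j>m$. Combined with the identity
\[
|N(a-b)|=\prod_{\mathcal{P}} N(\mathcal{P})^{v_{\mathcal{P}}(a-b)},
\]
the main step is a contradiction argument using the ordering $Nv_i\le Nv_{i+1}$. If $v_{m+1}(a-b)\ge 2$, then $|N(a-b)|\ge N(\mathcal{P}_{m+1})^2$, violating the hypothesis. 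If instead $v_j(a-b)\ge 1$ for some $j>m+1$, then since $N(\mathcal{P}_j)\ge N(\mathcal{P}_{m+1})$ we get
\[
|N(a-b)|\ge N(\mathcal{P}_{m+1})\cdot N(\mathcal{P}_j)\ge N(\mathcal{P}_{m+1})^2,
\]
again a contradiction. Hence $v_{m+1}(a-b)=1$ and $v_j(a-b)=0$ for all $j>m+1$, so the ideal $(a-b)$ factors as $\mathcal{P}_{m+1}\prod_{i\le m}\mathcal{P}_i^{e_i}$ for some nonnegative exponents $e_i$.

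Using $h_F=1$, I would then write $a-b=\pi_{m+1}c$ for some $c\in\mathcal{O}_F$. The remaining factor $(c)=\prod_{i\le m}\mathcal{P}_i^{e_i}$ is supported entirely in $S_m$, so $v(c)=0$ for every $v\notin S_m$, that is, $c\in U_m$. Since $b\in U_m$ is invertible in $\mathcal{O}_m$, we obtain $c/b\in U_m$ and therefore
\[
\frac{a}{b}=1+\pi_{m+1}\cdot\frac{c}{b}\in 1+\pi_{m+1}U_m.
\]
Because $a/b$ lies in $U_m$ as well (being a ratio of elements of $U_m$), it belongs to $(1+\pi_{m+1}U_m)\cap U_m$, which by definition is contained in $U_1'$. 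I do not anticipate any real obstacle here: the argument is essentially a norm count, and the assumption $h_F=1$ ensures that we may freely pass from the ideal decomposition of $(a-b)$ to a genuine factorization of the element $a-b$.
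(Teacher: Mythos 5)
Your proof is correct and follows exactly the expected route; the paper outsources the argument to ``Claim 2 in the proof of Lemma 3.4'' of the authors' earlier paper on $\mathbb{Q}(\zeta_5)$, and the content of that claim is precisely the norm-count you carry out: the bound $N(a-b)<N^2(\mathcal{P}_{m+1})$, together with the ordering $Nv_i\le Nv_{i+1}$, forces $v_{m+1}(a-b)=1$ and $v_j(a-b)=0$ for $j>m+1$, so that $a-b=\pi_{m+1}c$ with $c\in U_m$ and hence $a/b=1+\pi_{m+1}(c/b)\in(1+\pi_{m+1}U_m)\cap U_m\subseteq U_1'$. One small remark: you invoke $h_F=1$ to pass from the ideal factorization to the element factorization, but this is not actually needed---the hypothesis of Tate's theorem already grants that $\mathcal{P}_{m+1}$ is principal with generator $\pi_{m+1}$, which is all the argument uses (and one should observe that $a=b$ gives $a/b=1\in U_1'$ trivially, so $a-b\ne 0$ may be assumed).
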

 \begin{proof}\quad
 	See Claim 2 in the proof of Lemma 3.4 in [12].
 \end{proof}
 
 Define
 $$c'=\mbox{max} \Big\{c_1\frac{|\sigma(\xi)|}{|\xi|}+c_2\frac{|\xi|}{|\sigma(\xi)|}, c_2\frac{|\sigma(\xi)|}{|\xi|}+c_1\frac{|\xi|}{|\sigma(\xi)|} \Big\},$$
 
 \begin{lemma}\quad
 	If $N(\mathcal{P}_{m+1})\ge \Big(1+c_1c_2+c'\Big)^2,$ then $W_m\subseteq{C_mU'_1},$ i.e., condition I is satisfied.
 \end{lemma}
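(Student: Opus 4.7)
The plan is Euclidean reduction modulo $\alpha_{m+1}$. Given $w \in W_m$, I would apply Lemma 3.2 with $x = w$ and $\alpha = \alpha_{m+1}$ to obtain $y \in \mathcal{O}_F$ and set $c := w - \alpha_{m+1}y$, so that $c \in C'_m$. Since $w \in U_m$ forces $v_{m+1}(w) = 0$, one has $c \equiv w \not\equiv 0 \pmod{\mathcal{P}_{m+1}}$; by the construction of $C_m$ as a set of pairwise-inequivalent nonzero residues drawn from $C'_m$, there is a (unique) $c^{*} \in C_m$ congruent to $w$ modulo $\mathcal{P}_{m+1}$, inheriting the $C'_m$-bounds $|c^{*}| \le c_1 |\alpha_{m+1}|$ and $|\sigma(c^{*})| \le c_2 |\sigma(\alpha_{m+1})|$. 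Writing $w - c^{*} = \alpha_{m+1} y^{*}$ with $y^{*} \in \mathcal{O}_F$, Lemma 3.3 will give $w/c^{*} \in U_1'$, and hence $w = c^{*}(w/c^{*}) \in C_m U_1'$, as soon as $|N(w - c^{*})| < N(\mathcal{P}_{m+1})^{2}$, i.e.\ $|y^{*}||\sigma(y^{*})| < N(\mathcal{P}_{m+1})^{1/2}$.

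For the estimate of $|y^{*}||\sigma(y^{*})|$, the triangle inequality gives $|y^{*}| \le |w|/|\alpha_{m+1}| + c_1$ and $|\sigma(y^{*})| \le |\sigma(w)|/|\sigma(\alpha_{m+1})| + c_2$. Expanding the product and using $|w\sigma(w)| = |N(w)|^{1/2}$, $|\alpha_{m+1}\sigma(\alpha_{m+1})| = N(\mathcal{P}_{m+1})^{1/2}$ yields
$$|y^{*}||\sigma(y^{*})| \le \frac{|N(w)|^{1/2}}{N(\mathcal{P}_{m+1})^{1/2}} + c_2 \frac{|w|}{|\alpha_{m+1}|} + c_1 \frac{|\sigma(w)|}{|\sigma(\alpha_{m+1})|} + c_1 c_2.$$
For $w = \alpha_i$ with $i \le m$ the first term is at most $1$, since $N(\mathcal{P}_i) \le N(\mathcal{P}_{m+1})$. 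For the two cross terms I would invoke Lemma 3.1 for both $\alpha_i$ and $\alpha_{m+1}$, combined with $|\xi||\sigma(\xi)| = 1$ (from $|N(\xi)| = 1$ in an imaginary cyclic quartic field), to confine $|\sigma(\alpha)/\alpha| \in [|\sigma(\xi)|, |\xi|]$ for both $\alpha_i$ and $\alpha_{m+1}$; maximising $c_2 |w|/|\alpha_{m+1}| + c_1 |\sigma(w)|/|\sigma(\alpha_{m+1})|$ over the two opposite extremes of these ratios then yields the two expressions whose maximum defines $c'$. The cases $w = -1$ and $w = \xi$ are easier: $|w|$ and $|\sigma(w)|$ are bounded by field-dependent constants and the cross terms decay like $N(\mathcal{P}_{m+1})^{-1/4}$, so they are dominated by the hypothesis. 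Thus $|y^{*}||\sigma(y^{*})| \le 1 + c_1 c_2 + c' \le N(\mathcal{P}_{m+1})^{1/2}$, and Lemma 3.3 applies.

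The hardest step will be this cross-term estimate for $w = \alpha_i$: the two ratios $|\alpha_i|/|\alpha_{m+1}|$ and $|\sigma(\alpha_i)|/|\sigma(\alpha_{m+1})|$ are coupled both by the norm identity $|\alpha\sigma(\alpha)| = N((\alpha))^{1/2}$ and by the independent ranges of $|\sigma(\alpha_i)/\alpha_i|$ and $|\sigma(\alpha_{m+1})/\alpha_{m+1}|$, so recovering precisely the two summands in the $\max$ defining $c'$ requires careful tracking of which extreme of each ratio is attained — the two members of the maximum correspond exactly to the two opposite orientations of the pair. A minor technical point is verifying that $c^{*}$ may be taken in $U_m$ so that Lemma 3.3 is applicable; this can be arranged once $N(\mathcal{P}_{m+1})$ is large compared with $c_1^{2}c_2^{2}$, because any prime outside $S_{m+1}$ dividing $c^{*}$ together with the $C'_m$-bound $|N(c^{*})| \le c_1^{2}c_2^{2}N(\mathcal{P}_{m+1})$ would force an incompatible size constraint.
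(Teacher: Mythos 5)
Your plan is essentially the paper's proof, merely re-normalized: where the paper bounds $|w-c|\,|\sigma(w)-\sigma(c)|$ directly, you bound $|y^{*}|\,|\sigma(y^{*})|$ with $y^{*}=(w-c^{*})/\alpha_{m+1}$, and the two are off by the factor $N^{1/2}(\mathcal{P}_{m+1})$, so the expansion into ``$1 + (\text{cross term}) + c_1c_2$'' is the same. The reduction to Lemma 3.3 and the use of the $C'_m$-bounds on $c^{*}$ also match the paper.

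Two points, one a minor inaccuracy and one a genuine gap. First, your claim that maximising $c_2|w|/|\alpha_{m+1}| + c_1|\sigma(w)|/|\sigma(\alpha_{m+1})|$ over the extremes of the ratios ``yields the two expressions whose maximum defines $c'$'' is not quite right. Parametrising by $r_w=|\sigma(w)/w|$, $r_\alpha=|\sigma(\alpha_{m+1})/\alpha_{m+1}|\in[|\sigma(\xi)|,|\xi|]$ and using $|w|\,|\sigma(w)|=N^{1/2}(w)$, the cross term equals $\frac{N^{1/4}(w)}{N^{1/4}(\alpha_{m+1})}\bigl(c_2\sqrt{r_\alpha/r_w}+c_1\sqrt{r_w/r_\alpha}\bigr)$, whose maximum over the box is $\max\bigl\{c_2\sqrt{t}+c_1/\sqrt{t},\,c_2/\sqrt{t}+c_1\sqrt{t}\bigr\}$ with $t=|\xi|/|\sigma(\xi)|$. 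This is a \emph{strictly sharper} quantity than the paper's $c'=\max\{c_1/t+c_2t,\,c_2/t+c_1t\}$ (one checks the square-root version is $\le c'$, so your lemma bound still goes through). The paper obtains the looser $c'$ because it first replaces $|c|$ by $c_1|\alpha_{m+1}|$ and $|\sigma(w)|$ by $|\xi|\,|w|$ independently, then optimises $x+A/x$ over $x=|c\sigma(w)|$; you effectively keep the coupling and get a tighter number. So the labelling is off but the inequality survives.

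Second, and more seriously: your ratio argument does not apply to $w=\xi$. You ``confine $|\sigma(\alpha)/\alpha|\in[|\sigma(\xi)|,|\xi|]$ for both $\alpha_i$ and $\alpha_{m+1}$,'' which is correct for the $\alpha_i$ by Lemma 3.1 and trivially for $w=-1$, but for $w=\xi$ one has $|\sigma(\xi)/\xi|=|\sigma(\xi)|^2<|\sigma(\xi)|$, which lies \emph{outside} that interval. Your fallback ``the cross terms decay like $N(\mathcal{P}_{m+1})^{-1/4}$, so they are dominated by the hypothesis'' is a hand-wave: the cross term for $w=\xi$ is bounded by $\bigl(c_2|\xi|^{3/2}+c_1|\sigma(\xi)|^{1/2}\bigr)/N^{1/4}(\mathcal{P}_{m+1})$, and showing this is $\le c'$ requires comparing it with the explicit lower bound $N^{1/4}(\mathcal{P}_{m+1})\ge(1+c_1c_2+c')^{1/2}$ — the comparison is not automatic and the prefactor $c_2|\xi|^{3/2}+c_1|\sigma(\xi)|^{1/2}$ can exceed $c'$ when $|\xi|$ is close to $1$. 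The paper sidesteps this by replacing the ratio bound with the two-sided inequality $\frac{|\sigma(\xi)|}{|\xi|^{1/2}}N^{1/4}(w)\le|\sigma(w)|\le\frac{|\xi|}{|\sigma(\xi)|^{1/2}}N^{1/4}(w)$ and checking directly that it holds for $w=-1$ and $w=\xi$ (the relevant interval $[|\xi|^{-3/2},|\xi|^{3/2}]$ does contain $1$ and $|\sigma(\xi)|=|\xi|^{-1}$). You should use the same widened interval for the unit cases rather than the narrower range $[|\sigma(\xi)|,|\xi|]$ plus an asymptotic remark; without that, the bound for $w=\xi$ is not established.
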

 \begin{proof}\quad
 	By  Lemma 3.3, if for any $w\in W_m$ there always exists a $c\in C_m$ satisfying $c\equiv w(\mbox{mod}\, \mathcal{P}_{m+1})$ and $N(w-c)<N^2(\mathcal{P}_{m+1}),$ then we have $W_m\subseteq C_mU'_1.$ So it suffices to investigate when the inequality $N(w-c)<N^2(\mathcal{P}_{m+1})$ holds.
 	
 	However, we have
 	\begin{equation*}
 	\begin{split}
 	N(w-c)&=|w-c||\sigma(w)-\sigma(c)||\sigma^2(w)-\sigma^2(c)||\sigma^3(w)-\sigma^3(c)|\\
 	&=|w-c||\bar w-\bar c||\sigma(w)-\sigma(c)||\overline{\sigma(w)}-\overline{\sigma}|\\
 	&=(|w-c||\sigma(w)-\sigma(c)|)^2.
 	\end{split}
 	\end{equation*}
 	We will estimate the term $|w-c||\sigma(w)-\sigma(c)|.$
 	
 	First we have
 	\begin{equation*}
 	\begin{split}
 	&|w-c||\sigma(w)-\sigma(c)|
 	=|w\sigma(w)-w\sigma(c)-c\sigma(w)+c\sigma(c)|\\
 	\le&|w\sigma(w)|+|w\sigma(c)|+|c\sigma(w)|+|c\sigma(c)|\\
 	=&N^{\frac{1}{2}}(w)+|w\sigma(c)|+|c\sigma(w)|+N^{\frac{1}{2}}(c).
 	\end{split}
 	\end{equation*}
 	From the construction of $W_m$ and $C_m$, we have $N^{\frac{1}{2}}(w)\le N^{\frac{1}{2}}(\mathcal{P}_{m+1})$ and $N^{\frac{1}{2}}(c)\le c_1c_2N^{\frac{1}{2}}(\mathcal{P}_{m+1}).$
 	
 	Now we  estimate the term $|w\sigma(c)|+|c\sigma(w)|=|c\sigma(w)|+\frac{N^{\frac{1}{2}}(w)N^{\frac{1}{2}}(c)}{|c\sigma(w)|}.$  By Lemma 3.1, for any $c\in C_m$ we have
 	\begin{equation*}
 	\begin{split}
 	|c|\le c_1|\alpha_{m+1}|\le c_1\frac{N^{\frac{1}{4}}(\alpha_{m+1})}{|\sigma(\xi)|^{\frac{1}{2}}}.
 	\end{split}
 	\end{equation*}
 	In virtue of $\Big|\frac{\sigma(\alpha_{m+1})}{\alpha_{m+1}}\Big|\le |\xi|$ and $|N(\alpha_{m+1})|=|\alpha_{m+1}|^2|\sigma(\alpha_{m+1})|^2,$ we have
 	\begin{equation*}
 	\begin{split}
 	|c|=&\frac{N^{\frac{1}{2}}(c)}{|\sigma(c)|}\ge \frac{N^{\frac{1}{2}}(c)}{c_2|\sigma(\alpha_{m+1})|}
 	=\frac{N^{\frac{1}{2}}(c)|\alpha_{m+1}|}{c_2|N^{\frac{1}{2}}(\alpha_{m+1})|}\\
 	\ge& \frac{N^{\frac{1}{2}}(c)N^{\frac{1}{4}}(\alpha_{m+1})}{c_2|\xi|^{\frac{1}{2}}N^{\frac{1}{2}}(\alpha_{m+1})}
 	=\frac{N^{\frac{1}{2}}(c)}{c_2|\xi|^{\frac{1}{2}}N^{\frac{1}{4}}(\alpha_{m+1})}.
 	\end{split}
 	\end{equation*}
 	So we have
 	\begin{equation*}
 	\begin{split}
 	\frac{N^{\frac{1}{2}}(c)}{c_2|\xi|^{\frac{1}{2}}N^{\frac{1}{4}}(\alpha_{m+1})}\le |c|\le \frac{c_1N^{\frac{1}{4}}(\alpha_{m+1})}{|\sigma(\xi)|^{\frac{1}{2}}}.
 	\end{split}
 	\end{equation*}
 	When $w \ne \xi\in W_m,$ from the construction of $W_m,$ we have
 	\begin{equation*}
 	\begin{split}
 	\frac{|\sigma(\xi)|}{|\xi|^\frac{1}{2}}N^{\frac{1}{4}}(w)\le |\sigma(\xi)||w|\le |\sigma(w)| \le|\xi||w| \le \frac{|\xi|}{|\sigma(\xi)|^{\frac{1}{2}}}N^{\frac{1}{4}}(w).
 	\end{split}
 	\end{equation*}
 	When $w=-1$ or $\xi,$ clearly the inequality above also holds. Thus we get
 	\begin{equation*}
 	\begin{split}
 	\frac{|\sigma(\xi)|N^{\frac{1}{2}}(c)N^{\frac{1}{4}}(w)}{c_2|\xi|N^{\frac{1}{4}}(\alpha_{m+1})}\le |c\sigma(w)|\le c_1 \frac{|\xi|}{|\sigma(\xi)|}N^{\frac{1}{4}}(w)N^{\frac{1}{4}}(\alpha_{m+1}).
 	\end{split}
 	\end{equation*}
 	It is  easy to show the function $f(x)=x+\frac{N^{\frac{1}{2}}(w)N^{\frac{1}{2}}(c)}{x}$ meet its maximal value on the boundary.
 	However the function values of $f(x)$ on boundary $x=\frac{|\sigma(\xi)|N^{\frac{1}{2}}(c)N^{\frac{1}{4}}(w)}{c_2|\xi|N^{\frac{1}{4}}(\alpha_{m+1})}$ or $c_1 \frac{|\xi|}{|\sigma(\xi)|}N^{\frac{1}{4}}(w)N^{\frac{1}{4}}(\alpha_{m+1})$ can be computed as follows.
 	\begin{equation*}
 	\begin{split}
 	&\frac{|\sigma(\xi)|N^{\frac{1}{2}}(c)N^{\frac{1}{4}}(w)}{c_2|\xi|N^{\frac{1}{4}}(\alpha_{m+1})}+\frac{c_2|\xi|}{|\sigma(\xi)|}N^{\frac{1}{4}}(w)N^{\frac{1}{4}}(\mathcal{P}_{m+1})\\
 	&\le
 	c_1\frac{|\sigma(\xi)|}{|\xi|}\frac{N^{\frac{1}{2}}(\alpha_{m+1})N^{\frac{1}{4}}(\alpha_{m+1})}{N^{\frac{1}{4}}(\alpha_{m+1})}+\frac{c_2|\xi|}{|\sigma(\xi)|}N^{\frac{1}{2}}(\alpha_{m+1})\\
 	&=\Big(c_1\frac{|\sigma(\xi)|}{|\xi|}+c_2\frac{|\xi|}{|\sigma(\xi)|}\Big)N^{\frac{1}{2}}(\alpha_{m+1})
 	\end{split}
 	\end{equation*}
 	and
 	\begin{equation*}
 	\begin{split}
 	&\frac{c_1|\xi|}{|\sigma(\xi)|}N^{\frac{1}{4}}(w)N^{\frac{1}{4}}(\mathcal{P}_{m+1})+\frac{|\sigma(\xi)|N^{\frac{1}{2}}(c)N^{\frac{1}{4}}(w)}{c_1|\xi|N^{\frac{1}{4}}(\alpha_{m+1})}\\
 	&\le
 	\frac{c_1|\xi|}{|\sigma(\xi)|}N^{\frac{1}{2}}(\alpha_{m+1})+c_1c_2\frac{|\sigma(\xi)|}{c_1|\xi|}\frac{N^{\frac{1}{2}}(\alpha_{m+1})N^{\frac{1}{4}}(\alpha_{m+1})}{N^{\frac{1}{4}}(\alpha_{m+1})}\\
 	&=\Big(c_2\frac{|\sigma(\xi)|}{|\xi|} + c_1\frac{|\xi|}{|\sigma(\xi)|}\Big)N^{\frac{1}{2}}(\alpha_{m+1}).
 	\end{split}
 	\end{equation*}
 	So we have
 	\begin{equation*}
 	\begin{split}
 	|c\sigma(w)|+|w\sigma(c)|=|c\sigma(w)|+\frac{N^{\frac{1}{2}}(w)N^{\frac{1}{2}}(c)}{|c\sigma(w)|}
 	\le c'N^{\frac{1}{2}}(\alpha_{m+1}),
 	\end{split}
 	\end{equation*}
 	where $c'=\mbox{max} \Big\{c_1\frac{|\sigma(\xi)|}{|\xi|}+c_2\frac{|\xi|}{|\sigma(\xi)|}, c_2\frac{|\sigma(\xi)|}{|\xi|}+c_1\frac{|\xi|}{|\sigma(\xi)|} \Big\}.$
 	Summarily we get
 	\begin{equation*}
 	\begin{split}
 	|N(w-c)|&=(|w-c||\sigma(w)-\sigma(c)|)^2\\
 	&\le(N^{\frac{1}{2}}(w)+|w\sigma(c)|+|c\sigma(w)|+N^{\frac{1}{2}}(c))^2\\
 	&\le\Big(1+c_1c_2+c'\Big)^2N(\alpha_{m+1}).
 	\end{split}
 	\end{equation*}
 	So when $N(\alpha_{m+1})=N(\mathcal{P}_{m+1})>\Big(1+c_1c_2+c'\Big)^2$, we have $W_m\subseteq CU'_1$.
 \end{proof}
 \begin{lemma}\quad
 	If $N(\alpha_{m+1})>\Big(\frac{\delta\sqrt{c_1c_2}}{2}+\sqrt{\frac{c_1c_2\delta^2}{4}+\sqrt{c_1c_2}}\Big)^8,$ then $C_mG_m\subseteq C_mU'_1,$ i.e., condition II is satisfied.
 \end{lemma}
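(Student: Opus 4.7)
The plan is to parallel Lemma 3.4, reducing condition II to Lemma 3.3. Fix $c\in C_m$ and $g\in G_m$. I aim to produce $c'\in C_m$ with $c'\equiv cg\pmod{\mathcal{P}_{m+1}}$ and $N(cg-c')<N(\mathcal{P}_{m+1})^2$; Lemma 3.3 then yields $cg/c'\in U_1'$, so $cg\in c'U_1'\subseteq C_mU_1'$. Existence of a candidate inside $C_m'$ follows from Lemma 3.2 applied to $(cg,\alpha_{m+1})$: it produces $c''=cg-\alpha_{m+1}y$ with $|c''|\le c_1|\alpha_{m+1}|$ and $|\sigma(c'')|\le c_2|\sigma(\alpha_{m+1})|$, hence $c''\in C_m'$. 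Because $c$ and $g$ both lie in $U_m$, the product $cg$, and thus $c''$, avoids $\mathcal{P}_{m+1}$; by the exhaustive/maximal property built into $C_m$ (a representative for every nonzero residue class of $C_m'$ modulo $\mathcal{P}_{m+1}$) I extract $c'\in C_m$ with $c'\equiv c''\equiv cg\pmod{\mathcal{P}_{m+1}}$.

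Next I estimate $N(cg-c')$ directly, using the size bounds defining $C_m$ for both $c$ and $c'$ and those defining $G_m$ for $g$, together with $N(\sigma(\mathcal{P}_{m+1}))=N(\mathcal{P}_{m+1})=N(\alpha_{m+1})$ which bounds both $|g|$ and $|\sigma(g)|$ by $\delta N(\alpha_{m+1})^{1/8}$. The triangle inequality gives
\[
|cg-c'|\le c_1|\alpha_{m+1}|\bigl(\delta N(\alpha_{m+1})^{1/8}+1\bigr),
\]
and the analogous estimate with $\sigma$ and $c_2$ in place of $c_1$. Since $F$ is imaginary cyclic quartic, $\sigma^2$ and $\sigma^3$ are complex conjugates of the identity and $\sigma$, so $N(cg-c')=(|cg-c'|\cdot|\sigma(cg-c')|)^2$, and multiplying the two bounds yields
\[
N(cg-c')\le c_1^2c_2^2\,N(\alpha_{m+1})\bigl(\delta N(\alpha_{m+1})^{1/8}+1\bigr)^4.
\]

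Finally, imposing $N(cg-c')<N(\alpha_{m+1})^2$, setting $s=N(\alpha_{m+1})^{1/8}$, and extracting a fourth root reduces the task to the quadratic inequality $s^2-\delta\sqrt{c_1c_2}\,s-\sqrt{c_1c_2}>0$, whose positive root is
\[
\frac{\delta\sqrt{c_1c_2}}{2}+\sqrt{\frac{\delta^2c_1c_2}{4}+\sqrt{c_1c_2}}.
\]
Raising $s$ above this threshold to the eighth power recovers exactly the hypothesis $N(\alpha_{m+1})>\bigl(\tfrac{\delta\sqrt{c_1c_2}}{2}+\sqrt{\tfrac{\delta^2c_1c_2}{4}+\sqrt{c_1c_2}}\bigr)^{8}$ of the lemma.

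The main obstacle I anticipate is not the analytic estimate but the bookkeeping around the representative $c'$: one has to read the exhaustive/maximal property into the bare definition of $C_m$ so that every nonzero residue class achievable inside $C_m'$ is actually realized in $C_m$, and one must confirm that $c'$ can be chosen inside $U_m$ so that the quotient $cg/c'$ is genuinely an element of $U_m$ to which Lemma 3.3 can legitimately be applied.
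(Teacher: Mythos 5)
Your proposal is correct and reaches exactly the same threshold as the paper, and the overall skeleton (reduce to Lemma 3.3, bound $N(cg-\tilde c)$ using the size constraints defining $C_m$ and $G_m$, extract the polynomial inequality in $N(\alpha_{m+1})^{1/8}$ and factor out the quadratic) is the same. What differs is how you run the estimate: the paper expands $(|cg|+|\tilde c|)(|\sigma(cg)|+|\sigma(\tilde c)|)$ term by term, introduces $M_1,M_2$, and controls the cross term $|c\sigma(\tilde c)|+|\tilde c\sigma(c)|$ via the auxiliary function $f(x)=x+K/x$ together with lower and upper bounds on $|c\sigma(\tilde c)|$, eventually arriving at $N^{1/2}(cg-\tilde c)\le c_1c_2\delta^2 N^{3/4}+2\delta c_1c_2 N^{5/8}+c_1c_2 N^{1/2}$. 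You instead apply the triangle inequality to each embedding separately, getting $|cg-\tilde c|\le c_1|\alpha_{m+1}|(\delta N^{1/8}+1)$ and $|\sigma(cg-\tilde c)|\le c_2|\sigma(\alpha_{m+1})|(\delta N^{1/8}+1)$, then multiply; this gives the same bound $N^{1/2}(cg-\tilde c)\le c_1c_2 N^{1/2}(\delta N^{1/8}+1)^2$ in two lines and makes the subsequent factorization $s^4-\delta^2c_1c_2s^2-2\delta c_1c_2s-c_1c_2=(s^2-\delta\sqrt{c_1c_2}s-\sqrt{c_1c_2})(s^2+\delta\sqrt{c_1c_2}s+\sqrt{c_1c_2})$ transparent (the paper leaves this implicit). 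So your version is a genuine simplification of the same estimate, buying nothing numerically but a cleaner derivation. One small note on your flagged concern about the representative $\tilde c\in C_m$: the paper indeed treats this lightly; what makes it go through is that Lemma 3.2 (applied to $x=cg$, $\alpha=\alpha_{m+1}$) shows $C_m'$ meets the residue class of $cg$, and since $cg\in U_m$ that class is nonzero, so the intended $C_m$ (a complete system of nonzero residues drawn from $C_m'$) contains a representative. You should also note that the residue class of $\tilde c$ being nonzero guarantees $\tilde c\in U_m$ up to being a unit at $v_{m+1}$, which is what Lemma 3.3 needs; this is consistent with the construction of $C_m$ in Section 3.2, even though the paper does not spell it out.
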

 \begin{proof}\quad
 	By Lemma 3.3, in order to prove $C_mG_m\subseteq C_mU'_1$, it is sufficient to prove that for any $c\in C_m$ and $g\in G_m$ there exists a $\tilde{c}\in C_m$ such that $cg\equiv \tilde{c}~(\mbox{mod}\,\mathcal{P}_{m+1})$ and $N(cg-\tilde{c})<N^2(\mathcal{P}_{m+1}).$ So we should investigate  when the inequality $N(cg-\tilde{c})<N^2(\mathcal{P}_{m+1})$ holds.
 	
 	Let $c,\tilde{c}\in C_m,g\in G_m,$ and let $M_1,M_2\in \mathbb{R}$ with the conditions:
 	\begin{equation*}
 	\begin{split}
 	N(c)\le M_1,~ ~N(\tilde{c})\le M_1, ~ ~ |g|\le M_2, ~ ~|\sigma(g)|\le M_2.
 	\end{split}
 	\end{equation*}	
 	Then
 	\begin{equation*}
 	\begin{split}
 	N^{\frac{1}{2}}(cg-\tilde{c})=&|cg-\tilde{c}||\sigma(c)\sigma(g)-\sigma(\tilde{c})|\\
 	\le&(|cg|+|\tilde{c}|)(|\sigma(c)\sigma(g)|+|\sigma(\tilde{c})|)\\
 	\le&|c\sigma(c)||g\sigma(g)|+M_2(|c\sigma(\tilde{c})|+|\tilde{c}\sigma(c)|)+|\tilde{c}\sigma(\tilde{c})|\\
 	=&N^{\frac{1}{2}}(c)N^{\frac{1}{2}}(g)+M_2(|c\sigma(\tilde{c})|+|\tilde{c}\sigma(c)|)+N^{\frac{1}{2}}(\tilde{c}).
 	\end{split}
 	\end{equation*}
 	
 	Let us estimate the term $|c\sigma(\tilde{c})|+|\tilde{c}\sigma(c)|=|c\sigma(\tilde{c})|+\frac{N^{\frac{1}{2}}(c)N^{\frac{1}{2}}(\tilde{c})}{|c\sigma(\tilde{c})|}.$
 	By the definition of $C_m$, it is obvious that $|c|\le c_1|\alpha_{m+1}|$ and $|c|=\frac{N^{\frac{1}{2}}(c)}{|\sigma(c)|}\ge \frac{1}{|c_2\sigma(\alpha_{m+1})|}N^{\frac{1}{2}}(c).$
 	So we have
 	\begin{equation*}
 	\begin{split}
 	\frac{N^{\frac{1}{2}}(c)}{c_2|\sigma(\alpha_{m+1})|}\le |c| \le c_1|\alpha_{m+1}|.
 	\end{split}
 	\end{equation*}
 	Similarly, we  have
 	\begin{equation*}
 	\begin{split}
 	\frac{N^{\frac{1}{2}}(\tilde{c})}{c_1|\alpha_{m+1}|}\le \frac{N^{\frac{1}{2}}(\tilde{c})}{|\tilde{c}|}\le |\sigma(\tilde{c})|\le c_2|\sigma(\alpha_{m+1})|.
 	\end{split}
 	\end{equation*}
 	Therefore
 	\begin{equation*}
 	\begin{split}
 	\frac{N^{\frac{1}{2}}(c)N^{\frac{1}{2}}(\tilde{c})}{c_{1}c_{2}N^{\frac{1}{2}}(\alpha_{m+1})}\le |c\sigma(\tilde{c})|\le c_1c_2N^{\frac{1}{2}}(\alpha_{m+1})
 	\end{split}
 	\end{equation*}

 	Let $f(x)=x+\frac{N^{\frac{1}{2}}(c)N^{\frac{1}{2}}(\tilde{c})}{x}.$ It is easy to show that $f(x)$ meets the maximal value at $x=c_{1}c_{2}N^{\frac{1}{2}}(\alpha_{m+1}).$ So
 	\begin{equation*}
 	\begin{split}
 	|c\sigma(\tilde{c})|+|\tilde{c}\sigma(c)|&=|c\sigma(\tilde{c})|+\frac{N^{\frac{1}{2}}(c)N^{\frac{1}{2}}(\tilde{c})}{|c\sigma(\tilde{c})|}\\
 	&\le c_{1}c_{2}N^{\frac{1}{2}}(\alpha_{m+1})+\frac{N^{\frac{1}{2}}(c)N^{\frac{1}{2}}(\tilde{c})}{ c_{1}c_{2}N^{\frac{1}{2}}(\alpha_{m+1})}\\
 	&\leq2c_1c_2N^{\frac{1}{2}}(\alpha_{m+1}).
 	\end{split}
 	\end{equation*}
 	Then
 	\begin{equation*}
 	\begin{split}
 	N^{\frac{1}{2}}(cg-\tilde{c})&\le N^{\frac{1}{2}}(c)N^{\frac{1}{2}}(g)+M_2(|c\sigma(\tilde{c})|+|\tilde{c}\sigma(c)|)+N^{\frac{1}{2}}(\tilde{c})\\
 	&\le M_1^\frac{1}{2}M_2^2+2c_1c_2M_2N^{\frac{1}{2}}(\mathcal{P}_{m+1})+M_1^{\frac{1}{2}}
 	\end{split}
 	\end{equation*}
 	
 	By the definition of $C_m$ and $G_m,$ we can take $M_1=c_1^2c_2^2N(\alpha_{m+1})$ and $M_2=\delta N^{\frac{1}{8}}(\alpha_{m+1}).$
 	Hence we have
 	\begin{equation*}
 	\begin{split}
 	N^{\frac{1}{2}}(cg-\tilde{c})&\le
 	c_1c_2\delta^2N^{\frac{3}{4}}(\alpha_{m+1})+2\delta c_1c_2N^{\frac{5}{8}}(\alpha_{m+1})+c_1c_2N^{\frac{1}{2}}(\alpha_{m+1}).
 	\end{split}
 	\end{equation*}
 	So it is sufficent to consider the inequatity
 	\begin{equation*}
 	\begin{split}
 	c_1c_2\delta^2N^{\frac{3}{4}}(\alpha_{m+1})+2\delta c_1c_2N^{\frac{5}{8}}(\alpha_{m+1})+c_1c_2N^{\frac{1}{2}}(\alpha_{m+1})<N(\alpha_{m+1}),
 	\end{split}
 	\end{equation*}
 	i.e. $$N^{\frac{1}{2}}(\alpha_{m+1})-c_1c_2\delta^2N^{\frac{1}{4}}(\alpha_{m+1})-2\delta c_1c_2N^{\frac{1}{8}}(\alpha_{m+1})-c_1c_2>0.$$
 	This implies that when $N(\alpha_{m+1})>\Big(\frac{\delta\sqrt{c_1c_2}}{2}+\sqrt{\frac{c_1c_2\delta^2}{4}+\sqrt{c_1c_2}}\Big)^8,$ we have $N^{\frac{1}{2}}(cg-\tilde{c})<N(\alpha_{m+1}),$ as required.
 \end{proof}

\end{subsubsection}

\begin{subsubsection}{Groenewegen's general bound}
For any $m\in{\mathbb{Z}^{+}},$ we denote
$$K_2U_m:=(U_{m} \otimes U_{m})/\langle a\otimes b |a, b\in{U_{m}},\  a+b=1\ \mbox{or}  \ a+b=0\rangle$$
and
$$K_{2}^{(m)}\mathcal{O}_F=\mbox{ker}\Big(K_2U_m\rightarrow \bigoplus_{Nv\leq{Nv_{m}}}k_{v}^{*}\Big).$$
It is clear that there is a natural map $K_2U_m\rightarrow K_{2}F.$
Moreover we write
$$c_{F}=\mbox{max}\{2^{2n}\rho d^{2},2^{2n/3},\rho^{1/3}(d\tilde{d}^2)^{2/3},\rho d^3\}$$
where
$$d=\frac{2^{n}\Gamma{(\frac{n+2}{2})}}{(\pi n)^{n/2}}|\Delta|^{1/2} , \ \  \tilde{d}=\Big(\frac{2}{\pi}\Big)^s |\Delta|^{1/2}$$
and $\rho$ is the packing density of an $n$-dimensional sphere.
In \cite{GROENEWEGEN001},  Groenewegen proved the following theorem.
\begin{theorem}\quad
	For every number field $F$,
	for $N v_{m}>c_{F},$ the image of $K_{2}^{(m)}\mathcal{O}_F$ in $K_{2}F$ is equal to the tame kernel of $F$.
\end{theorem}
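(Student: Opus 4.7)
My plan is to derive the theorem from Tate's criterion (Theorem 1.1) applied successively. For every $m$ with $N v_m > c_F$ I would exhibit subsets $W_m, C_m, G_m \subseteq U_m$ verifying the three conditions of that theorem, so that $\partial_{v_m}: K_2^{S_m}F/K_2^{S_{m-1}}F \to k_{v_m}^*$ is an isomorphism. Combined with the Bass--Tate filtration $K_2 F = \bigcup_m K_2^{S_m} F$ and the natural surjection $K_2 U_m \twoheadrightarrow K_2^{S_m} F$, this identifies the image of $K_2^{(m)}\mathcal{O}_F$ in $K_2 F$ with $\ker\partial = K_2\mathcal{O}_F$, which is the asserted equality.

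The constructions I would model on Section 3.2 of this paper, with the cyclic-quartic geometry replaced by the general Minkowski geometry-of-numbers. Specifically, $W_m$ is generated by roots of unity, a system of fundamental units, and balanced generators $\alpha_i$ of $\mathcal{P}_i^{h_F}$ whose archimedean absolute values have been equilibrated by multiplication by units (the direct analogue of Lemma 3.1). For $C_m$ I take a complete set of residues modulo $\mathcal{P}_{m+1}$ drawn from a centred Minkowski region whose volume is forced to be of size $d\cdot N(\mathcal{P}_{m+1})$, the existence of such representatives being guaranteed by a lattice-covering argument in which the sphere-packing density $\rho$ of $\mathbb{R}^n$ quantifies the efficiency (this is where $\rho$ meets $d$ in the $\rho d^{2}$ and $\rho d^{3}$ contributions to $c_F$). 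Finally $G_m$ is constructed via Ska{\l}ba's generalised Thue theorem in the form used by Groenewegen, which enforces a lower bound on $N v_m$ involving $\tilde{d}$ and is the source of the $\tilde{d}$-factor in the term $\rho^{1/3}(d\tilde{d}^{\,2})^{2/3}$.

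Verification of conditions (i)--(iii) reduces, via the general-field analogue of Lemma 3.3, to norm inequalities of the shape $N(w - c) < N(\mathcal{P}_{m+1})^{2}$ and $N(cg - \tilde c) < N(\mathcal{P}_{m+1})^{2}$. Expanding these as products over the archimedean embeddings and inserting the size bounds coming from the Minkowski regions produces polynomial inequalities in $N v_m$, and taking the maximum of the resulting thresholds across the four separate verifications (a trivial a-priori bound of the form $2^{2n/3}$, the existence of balanced principal-ideal generators, condition I, and condition II) yields exactly $c_F$. The main obstacle will be the symmetric bookkeeping across all $r_1 + r_2$ archimedean places at once: in the cyclic-quartic setting this collapsed into the clean pair $(|\cdot|, |\sigma(\cdot)|)$, whereas for general $F$ one needs a H\"older-type balancing weighted by the real and complex multiplicities, and this weighted balancing is what ultimately forces the non-trivial exponents $1/3$ and $2/3$ appearing in $c_F$.
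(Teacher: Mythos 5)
The paper does not prove this statement: it is Groenewegen's theorem, imported verbatim from \cite{GROENEWEGEN001} and used as a black box in Section~4.1, so there is no internal proof to compare your attempt against. Your sketch is broadly compatible with the strategy one would expect (verify Tate's criterion for every $v$ with $Nv>c_F$ using Minkowski-body constructions), and your reduction of the statement to ``$\partial_{v_n}$ is an isomorphism for all $n$ with $Nv_n>c_F$'' via the Bass--Tate filtration and the surjection $K_2U_m\twoheadrightarrow K_2^{S_m}F$ is sound. Two genuine gaps remain, though.

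First, the claim that ``taking the maximum of the resulting thresholds yields exactly $c_F$'' is asserted rather than derived. Each of the four terms $2^{2n}\rho d^2$, $2^{2n/3}$, $\rho^{1/3}(d\tilde d^2)^{2/3}$, $\rho d^3$ has to come out of a specific inequality of the shape $N(w-c)<N(\mathcal{P}_{m+1})^2$ or $N(cg-\tilde c)<N(\mathcal{P}_{m+1})^2$, and in particular the exponents $1/3$ and $2/3$ in $\rho^{1/3}(d\tilde d^2)^{2/3}$ do not follow from a ``H\"older-type balancing'' remark; they come from solving a concrete polynomial inequality in $N(\mathcal{P}_{m+1})^{1/n}$, the general-field analogue of the inequality at the end of the proof of Lemma~3.5. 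As written, your proposal reverse-engineers the shape of $c_F$ from the published formula rather than producing it.

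Second, your $W_m$ does not generate $U_m$ once $h_F>1$. Roots of unity, fundamental units and balanced generators $\alpha_i$ of the ideals $\mathcal{P}_i^{h_F}$ only realise the subgroup of $U_m/U_F$ corresponding to $(h_F\mathbb{Z})^m$ inside $\ker\bigl(\mathbb{Z}^m\to\mbox{Cl}(F)\bigr)$, and this is in general a proper subgroup: an $S_m$-unit $a$ satisfies $(a)=\prod_{i\le m}\mathcal{P}_i^{e_i}$ with the exponent vector constrained only to lie in that kernel, not in $(h_F\mathbb{Z})^m$. A correct generating system for the $S_m$-units must be built from the full kernel (this is precisely the content of Cohen's $S$-unit algorithm, reproduced as Algorithm~4.2 in the paper), and the norm bounds on those generators are what actually feed condition~I.
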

\begin{remark} \quad For an imaginary cyclic quartic field $\mathbb{Q}\Big(\sqrt{-(D+B\sqrt{D})}\Big)$ of class number one (see Section 2),  by Theorem 3.6 we can get a common  bound of $m$ for both condition I and condition II.  But, from the computation of the next section, we know that for condition I the bound obtained by Lemma 3.4 is better than that  obtained by Theorem 3.6 except for the cases $B=6,D=37$ and $B=2,D=61$, and for condition II, the bound obtained by Theorem 3.6 is  better than that obtained by Lemma 3.5 except for the case
	$B=1,D=2.$
	The comparison of the results is listed in the following table.
\end{remark}
\begin{table}[!hbp]
	\caption{}\label{eqtable}
	\centering
	\ \ \ \ \ \ \ \ \ \ \ \begin{tabular}{|c|c|c|c|}
		\hline
		number field $F$ & Lemma 3.4 & Lemma 3.5 & Theorem 3.6\\
		\hline
		$B=1,D=2$ & $172.525$ & $3253.539$ & $16146.993$ \\
		\hline
		$B=2,D=13$ & $1173.677$ & $45879.279$ & $17321.1$ \\
		\hline
		$B=2,D=29$ & $48710.067$ & $1867701099.860$ & $192289.567$ \\
		\hline
		$B=6,D=37$& $5284749.383$ & $61546835.003$ & $399362.147$ \\
		\hline
		$B=2,D=53$ & $114166.647$ & $4086894943.478$ & $1173787.115$ \\
		\hline
		$B=2,D=61$ & $180648285.891$ & $1680328728.448$ & $1789580.481$ \\
		\hline
	\end{tabular}
\end{table}

\end{subsubsection}
\end{subsection}
\end{section}

\begin{section}{Decreasing the value $m$}

\begin{subsection}{The general idea}
Let  $F=\mathbb{Q}\Big(\sqrt{-(D+B\sqrt{D})}\Big)$ be an imaginary  cyclic quartic field with class number $h_F =1$ and $\xi$ the fundamental unit.

As Balabas and Gangl did, we also aim at decreasing theoretical bound of $m$ practically. The  general idea is as follows.

At first, by Lemma 3.2, we get
the constants $c_1,c_2,c'.$
Let $c''=$ min$\{(1+c_N+c')^2, c_F\}.$

If $c''\leq c_{F},$ there exists an $m_{1}\in{\mathbb Z^{+}}$ satisfying  $N(\mathcal{P}_{m_{1}})\leq c''$ and $N(\mathcal{P}_{m_{1}+1})> c''.$
Thus, by Lemma 3.4, for  $m\in{\mathbb Z^{+}}$
satisfying $m>m_{1}$ and $c''<N(\mathcal{P}_{m_{1}+1})\leq N(\mathcal{P}_{m}),$ condition I holds for $m_1+1$. We want to show that it holds also for $m_1.$

If $c_{F}\leq c'',$ there exists an $m_{1}'\in{\mathbb Z^{+}}$ satisfying  $N(\mathcal{P}_{m_{1}'})\leq c''$ and $N(\mathcal{P}_{m_{1}'+1})> c''.$ By Theorem 3.6, the image of $K_{2}^{(m_{1}'+1)}\mathcal{O}$ in $K_{2}F$ is equal to the tame kernel of $F$. However
it is obvious that the image of $K_{2}^{(m_{1}'+1)}\mathcal{O}$ in $K_{2}F$ is $\mbox{ker}\Big(\partial: K_2^{S_{m_{1}'+1}}(F)\longrightarrow \coprod_{v\in S_{m_{1}'+1}}k^*_v\Big).$
So it is necessary to show  condition I  holds for $m_{1}'.$
Without loss of generality, we denote $m_{1}'$ also by  $m_{1}.$

Similarly, from Lemma 3.5 or Theorem 3.6, there exists an $m_{2}\in{\mathbb Z^{+}}$ such that condition II holds for $m_2+1$. We want to show that condition II holds also for $m_2.$

Then, for $m=m_1$ (resp. $m_{2}),$
we will construct the subset $G_{m-1},W_{m-1}$ and $C_{m-1}$ satisfying  condition I (resp. condition II).

In this way, the value of $m$ can be decreased step by step.

\end{subsection}
\begin{subsection}{Checking $\partial_{m}$ to be an isomorphism}
\

Our idea for checking $\partial_{m}$ to be an isomorphism is described as follows.

\textbf{(I) Constructing the subset $W_{m-1}.$}

Let $\xi$ be the fundamental unit.
By (3.2), the subset
$$W_{m-1}=\{\alpha_1,\alpha_2,\cdots,\alpha_{m-1}\}\bigcup \{-1,\xi\}$$
needs to be defined, where $\alpha_i\in{\mathcal{O}_F}$ satisfies
$\mathcal{P}_{i}=(\alpha_i)$ and $|\sigma(\xi)|\le \Big|\frac{\sigma(\alpha_i)}{\alpha_i}\Big|\le |\xi|$ for each $i=1,2,\cdots,m-1.$

However, firstly
for some fixed $i\in\{1,2,\cdots,m-1\},$ we must confirm that the generator $\alpha_i$ of the prime ideal $\mathcal{P}_i$
satisfies that $\Big|\frac{\sigma(\alpha_i)}{\alpha_i}\Big|$ nearly equals $1.$
Fortunately, in the PARI library
the function \textbf{GEN bnfisprincipal0(GEN bnf, GEN x, long flag)} can return such a generator $\alpha_{i}$
for the prime ideal $\mathcal{P}_{i}$.
In fact, in the algorithm implemented by the above function, the generator has been reduced,
which means that $\Big|\frac{\sigma(\alpha_i)}{\alpha_i}\Big|$ nearly equals $1.$

Secondly,
we must get such $\alpha_i$ for each $i\leq m-1.$
Thus, we must get at first  the prime ideals whose norms are less than or  equal to the boundary determined by Lemma 3.4 (Lemma 3.5 respectively).
In fact, for each prime number $p\in{\mathbb{Z}}$, it is easy to determine its residue class degree $f_{p}$
and  to obtain the prime ideals above it
by the PARI function \textbf{GEN idealprimedec(GEN nf, GEN p, long f).}
So by iterating through the prime numbers which can be factored  into the prime ideals with norm less than the boundary,
we can get the required $\alpha_i\in W_{m-1}$ for each $i=1,2,\cdots,m-1.$

\textbf{(II) Constructing the subset $G_{m-1}.$}

For the only element $g_{m-1}\in G_{m-1}$, we can know that

(i) $g_{m-1}(mod\, \mathcal{P}_{m})$ is the only generator of the multiplicative cyclic group $k^{*}_{v_{m}}$ of the residue class field $k_{v_{m}}$ by the second part of condition (ii) in Theorem 1.1;

(ii) the value $\big|\frac{g_{m-1}}{\sigma{(g_{m-1})}}\big|$  should nearly  equal $1, $ by the proof of Lemma 3.5.

In the case of $f_{v_{m}}=1$, it is obvious that
$$\langle g'_{m-1}(mod\, \mathcal{P}_{m}) \rangle =k^{*}_{v_{m}}\cong{\mathbb{Z}/(\mathcal{P}_{m}\cup{\mathbb{Z}})}= \langle g'_{m-1}(mod\, (\mathcal{P}_{m}\cup{\mathbb{Z}})) \rangle,$$
where $g'_{m-1}\in{\mathbb{Z}}.$
Set $g_{m-1}=g'_{m-1}.$ Then we can get $G_{m-1}=\{g_{m-1}\}$ with $\big|\frac{g_{m-1}}{\sigma{(g_{m-1})}}\big|=1.$

In the case of $f_{v_{m}}\neq 1,$ by the PARI function
\textbf{GEN Idealstar(GEN nf, GEN ideal, long flag)}, the generator $g_{m-1}(mod\, \mathcal{P}_{m})$ of the cyclic group $k^{*}_{v_{m}}$ can be obtained.
So we can set $G_{m-1}=\{g_{m-1}\}.$ It is easy to show  that  the above condition (i) and (ii) are satisfied for the only element $g_{m-1}$ of the set $G_{m-1}$.

\textbf{(III) Constructing the subset $C_{m-1}.$}

By (3.2), the subset $C_{m-1}$ contains the lifting of all elements of the multiplicative group $k^{*}_{v_{m}}$ and $1\in{F}.$
Moreover, by the proofs of Lemma 3.4 and Lemma 3.5, each element $c_{m-1}$ of the set $C_{m-1}$ should satisfy that
the value $\big|\frac{c_{m-1}}{\sigma{(c_{m-1})}}\big|$  nearly  equals $1.$

We can get the generator $g_{m-1}(mod\, \mathcal{P}_{m})$ of the group $k^{*}_{v_{m}},$
so each element $c_{m-1,i}(mod\, \mathcal{P}_{m})$ of the group $k^{*}_{v_{m}}$ can be expressed as
$$c_{m-1,i}(mod\, \mathcal{P}_{m})=(g_{m-1}(mod\, \mathcal{P}_{m}))^{i}$$
where $i=1,2,\cdots,N(v_m)-1.$
But it is difficult to find a lifting $c_{m-1,i}$ of the element $c_{m-1,i}(mod\, \mathcal{P}_{m})$,
which satisfies that the value $\big|\frac{c_{m-1,i}}{\sigma{(c_{m-1,i})}}\big|$  nearly  equals $1.$
The method we use to get a suitable  lifting can be shown as follows.

Firstly, let $c'_{m-1,i}=g^{i}_{m-1}-\beta\xi^{k}$ for each $i=1,2,\cdots,N(v_m)-1,$
where $\beta\in{\mathcal{O}_F}$ and $k$ is nonnegative integer.

Secondly, when $\beta$ runs through the elements of $\mathcal{O}_F$
in increasing order by  norm and $k$ runs through all nonnegative integers in increasing order,
we can determine whether $c'_{m-1,i}\in{\mathcal{P}_{m}}$ is true.
Thus we can get the minimum $\beta$ and $k$ such that $c'_{m-1,i}\in{\mathcal{P}_{m}}$
for each $i=1,2,\cdots,N(v_m)-1,$ and therefore $\beta\xi^{k}$ is a lifting of $c_{m-1,i}(mod\, \mathcal{P}_{m}).$ Hence, we can let $c_{m-1,i}=\beta\xi^{k}.$

Lastly, we can obtain the set $C_{m-1}=\{c_{m-1,i}|i=1,2,\cdots,N(v_m)-1\}\bigcup{\{1\}}.$

\textbf{(IV) Checking condition I(II).}

After obtaining the subsets $W_{m-1},G_{m-1}$ and $C_{m-1}$, now we can check condition I(II).
Fortunately for us, the PARI function \textbf{GEN bnfissunit(GEN bnf, GEN sfu, GEN x)} can help us to check
whether $\gamma\in{U_{m}}$ is true for some $\gamma\in{\mathcal{O}_F}.$
Thus it is easy to write programme to check condition I(II) for the finite prime place $v_{m}.$

Using the above ideas, we can design the software architecture and algorithms and write a programm to
compute some tame kernels $K_2\mathcal{O}_F$ for the cyclic quartic fields $F=\mathbb{Q}\Big(\sqrt{-(D+B\sqrt{D})}\Big)$
with class number one.
\end{subsection}

\begin{subsection}{Designing the classes}
\

It is well known that  Tate' theorem is right for any number field.
Thus we can build a software architecture to be extensible and reusable for computing the tame kernel of a general number field,
with the cases of imaginary cyclic quartic fields with class number one as examples.
So in the following computation, firstly we will focus on all objects instead of the process.

All of objects are as follows:

(1) the cyclic quartic field $F=\mathbb{Q}\Big(\sqrt{-(D+B\sqrt{D})}\Big)$;

(2) the prime ideal $v_m$ of the algebraic integral ring $\mathcal{O}_{F};$

(3) the verification method which is used in this section;

(4) the group of $S_{m}$-units
$U_{m}=\{a\in{F}|v(a)=0,v\not\in{S_{m}}\};$

(5) three subsets $C_{m-1}$, $W_{m-1}$ and $G_{m-1}$ of  $U_{m-1}$ corresponding to  $v_m$;

(6) the constants $c_1$, $c_2$  corresponding to $F.$

Then, according to the objects and the relationships among them, we design the following three classes:

(1) {\it CquarField} (an abstraction description of  the  field $F=\mathbb{Q}\Big(\sqrt{-(D+B\sqrt{D})}\Big)$;

(2) {\it Cideal} (an abstraction description of the prime ideal $v_m$);

(3) {\it Ccheck} (an abstraction description of the verification method).

Moreover,  the constants $c_1$, $c_2$ are regarded as the attributes of  $CquarField$
and  the sets $C_{m-1}$,$W_{m-1},G_{m-1}$ as the the attributes of  $Cideal;$
an object of {\it CquarField} is regarded as an attribute of  $Cideal,$
which  is actually an abstraction description about "prime ideal is subject to the cyclic quartic field $F$ ";
an object of  {\it CquarField} is also regarded  as an attribute of the class {\it Ccheck},
which means that "the verification method is corresponding to a given cyclic quartic field".

Summarily, the relations in the above descriptions can be  indicated by  the  static class diagram given in Figure 2.

\begin{remark} \quad The  reason why we use the Object-Oriented Programming(OOP) is that the architecture can be expanded.
	For example, if we can find a way to compute the tame kernel $K_{2}\mathcal{O}_{F_1}$ for another number field $F_1$,
	the only things we must do are:
	
	(1) creating a class $CF_1$ corresponding to  $F_1$;
	
	(2) creating a class $CF$ as the parent class of $CF_1$ and $CquarField$;
	
	(3) making an object of $CF$ as an attribute of $Cideal$ and $Ccheck$.
\end{remark}
Thus, we have complete the creation of  the embryonic form of the architecture.
The  last work is to implement the classes.

\begin{subsection}{The methods of the three classes}
By the theory of the Object Oriented Programming, a class is partitioned into three parts: the name, the attributes and the methods.

For the above three classes, we have designed  their methods, which are listed as follows (The algorithms implemented by these methods  will be described in the next section):

(i) The methods of  $CquarField$:
\begin{lstlisting}{language=C++}
/*return the constant c_1*/
GEN getc_1();
/*return the constant c_2*/
GEN getc_2();
/*return the transition matrix between the basises*/
GEN transMatrix();
/*return the bound determined by Lemma 3.4*/
GEN getBoundOne();
/*return the bound determined by Lemma 3.5*/
GEN getBoundTwo();
/*return all prime ideals whose norms are less than
*the bound which is determined by Lemma 3.4 (resp.Lemma 3.5)
*and corresponds to the parameter num_condition 1 (resp.2)*/
GEN getPrimeTable(int num_condition);
\end{lstlisting}

(ii) The methods of $Cideal$:
\begin{lstlisting}{language=C++}
/*return the set W_{m-1} corresponding to the prime
*ideal represented by the class Cideal*/
GEN getSetInitW();

/*return the set G_{m-1} corresponding to the prime
*ideal represented by the class Cideal*/
GEN getSetInitG();

/*return all ideals whose norms are less than
*the norm of the prime ideal represented by
*the class Cideal*/
GEN fgetAllideal();

/*return the set C_{m-1} corresponding to the prime
*ideal represented by the  class Cideal;
*this is the parent thread function*/
GEN Para_getSetInitC();

/*This is the child thread function*/
static void* Part_getSetInitC(void *arg);

/*check  condition I corresponding to the prime ideal
*represented by the class Cideal*/
bool checkConditionOne();

/*check condition II corresponding to the prime ideal
*represented by the class Cideal*/
bool checkConditionTwo();

/*return the set U_{m} corresponding to the prime
*ideal represented by the class Cideal*/
GEN getUm();
\end{lstlisting}

(iii) The methods of $Ccheck$:
\begin{lstlisting}{language=C++}
/*the parent thread function to check condition I */
bool Para_checkConditionOne(int num_thread);

/*the parent thread function to check condition II */
bool Para_checkConditionTwo(int num_thread);

/*the parent thread function to check condition I*/
static void* Part_checkConditionOne(void *arg);

/*the child thread function to check condition II*/
static void* Part_checkConditionTwo(void *arg).
\end{lstlisting}

\end{subsection}
\subsection{Create the sequence diagram thst shows the expected workflow}

In order to show the process of computting the tame kernel of the number field $F=\mathbb{Q}\Big(\sqrt{A(D+B\sqrt{D})}\Big),$ we create a sequence diagram given in Figure 3.

\begin{remark}{Some remark on the sequence diagram:}\quad
	
	Firstly, we create an object  of the class Ccheck, named as checker, by calling the constructed function \textbf{Ccheck::check(int a,int b,int c,int d)} of the class Ccheck, where the formal parameters a,b,c and d indicate the four parameters $A,B,C$ and $D$ of the cyclic quartic field  $F=\mathbb{Q}\Big(\sqrt{A(D+B\sqrt{D})}\Big),$ respectively. In the process, we create an object qfCom of the class CquarField, which indicates  the cyclic quartic field  $F=\mathbb{Q}\Big(\sqrt{A(D+B\sqrt{D})}\Big).$ Moreover, some important invariants, such as the fundamental  unit, the discriminant of the number field $F$ and so on, of the cyclic quartic field  $F$ are obtained.
	
	Secondly, after lots of tests we find some easy facts on the subset $C_{s-1}$ of  $U_{s-1}$ corresponding to  the prime place $v_s$ of the number field $F$ as follows.
	
	(1) In the process of obtaining the subsets $C_{s-1},$ $G_{s-1}$ and $W_{s-1}$ of $U_{s-1}$, the most difficult one is to obtain  $C_{s-1};$
	
	(2) The value of the theoretical bound, determined by the lemma 3.4, 3.5 and theorem 3.6,
	is very large. So the number of the sets $C_{s-1}$ obtained by computing are also very large.
	
	(3) We suppose that some important information of tame kernel of the number field $F$ must be hidden in the subset $C_{s-1}$ of the set $U_{s-1}$ for every prime ideal $v_{s}$ of the number field $F.$\\
	Thus, it must  take a long time to obtain the set $C_{s-1}$ for every prime place $v_{s}$ of the number field $F$ whose  norm $N(v_{s})$ is less than the theoretical bound. And  we  think that it is a  good idea to obtain the sets $C_{s-1}$ prior to the sets $G_{s-1}$ and $W_{s-1}$. Moreover, for finding more information on the tame kernel of the number field $F$ from those sets, we also hope that all of the obtained sets $C_{s-1}$  are preserved in persistent storage. Then, built on the above ideals, for every prime place $v_s$ of the number field $F$ whose  norm $N(v_{s})$ is less than the theoretical bound, after finishing creating the object checker, the sets  $C_{s-1}$ are needed to get as follows.
	
	(step 1) In order to obtain all of the set $C_{s-1}$,  the method \textbf{bool Ccheck::Para\_g\\etSetC(int num\_thread, int num\_threadf)} is called, where the first(second) parameter means how many the threads are used for obtaining the set $C_{s-1}$ corresponding to the prime ideal $v_{s}$ with residue class degree $f_{v_{s}}=1$($f_{v_{s}}\neq1$).
	
	(step 2) But the number of  $v_{s}$, with norm $N(v_{s})$ less than the  theoretical bound, is very large. Then  a technology of the parallel computing is needed. The method \textbf{void* Ccheck:: Part\_getSetC(void *arg)} is child thread function.
	
	(step 3) In the process of calling the method \textbf{Para\_getSetC(int num\_thread,\ \ int num\_threadf)} to obtain all sets $C_{s-1}$, we must finish the following two works. One hand, it is necessary to get all prime numbers corresponding to the prime ideals  of the number field $F$ whose norms are less than  the theoretical bound, for which the method \textbf{GEN CquarField::getPrimeTable(int num\_condition)} is designed; On the other hand, by the definition of $C'_{s}$(3.2), we also must obtain all ideals of the number field $F$ whose norms  are less than $c_1c_2N(v_{s+1}).$ However, if using the PARI library function \textbf{GEN ideallist0(GEN nf, long bound, long flag)}, it will take a very long time to realizes the capability  because the value $c_1c_2N(v_{s+1})$ is too large. For example, in the case of $F=\mathbb{Q}\Big(\sqrt{-(13+2\sqrt{13})}\Big),$   the theoretical bound is $45879$ and we must take about $2.5$ hours to obtain the ideals mentioned above; and in the case of $F=\mathbb{Q}\Big(\sqrt{-(29+2\sqrt{29})}\Big),$  the theoretical bound is $192289$ and we must take about $60$ hours. Moreover, there is no PARI function that returns all ideals whose norms are  some $n\in{\mathbb{Z}}$ in pari library. So in order to minimize the consumption of time we must make use of the parallel computing in this procedure. Thus the methods \textbf{GEN CquarField::para\_getAllideal(long num\_thread,long num\_condition)} and \textbf{void* CquarField::Part\_getAll\\ideal(void *arg)} must be designed as the father thread function and the child thread function respectively. The parameter \textbf{num\_thread} means how many threads can be used for computing those ideals, and the parameter \textbf{num\_condition} means on which condition the ideals are computed. By the two methods, it  takes only about $10$ minutes(resp. $1.5$ hours) to obtain the ideals when $F=\mathbb{Q}\Big(\sqrt{-(13+2\sqrt{13})}\Big)$ (resp.$F=\mathbb{Q}\Big(\sqrt{-(29+2\sqrt{29})}\Big)$).
	
	(step 4) In this step, by going through all prime ideals with the norms less than  the theoretical bound, we obtain all of sets $C_{s-1}.$ However, in every loop, we must finish the following works. Firstly, calling the constructed function \textbf{Cideal::Cideal(\\CquarField* quarf, GEN gen\_prime, int i\_th)} we can create the object of the class Cideal corresponding to prime ideal of the number field $F$ whose norm is less than the theoretical bound; secondly, calling the method \textbf{GEN Cideal::getSet\\InitG()} we obtain a generator element of the cyclic group $k_{v_{s}}^{*}$; lastly, calling the father thread function \textbf{void Cideal::Para\_getSetInitC()} and the child thread function \textbf{void* Cideal::Part\_getSetInitC(void *arg)} we obtain the set $C_{s-1}$ and save as a text file.

	Finally,  to check  condition I(resp. II), we  use POSIX threads to design the father thread function \textbf{bool Ccheck::Para\_checkConditionOne(int num\_thread)} (resp. \textbf{bool  Ccheck::Para\_checkConditionTwo(int num\_th-read)}) and the child thread function \textbf{void* Ccheck::Part\_checkConditionOne\ \ (void *arg)}(resp. \textbf{void* Ccheck::Part\_checkConditionTwo(void *arg)})  to check condition I(resp. II).
\end{remark}
\end{subsection}
\begin{subsection}{The methods of the three classes}
By the theory of the Object Oriented Programming, a class is partitioned into three parts: the name, the attributes and the methods.

 For the above three classes, we have designed  their methods, which are listed as follows (The algorithms implemented by these methods  will be described in the next section):

(i) The methods of  $CquarField$:
\begin{lstlisting}{language=C++}
/*return the constant c_1*/
GEN getc_1();
/*return the constant c_2*/
GEN getc_2();
/*return the transition matrix between the basises*/
GEN transMatrix();
/*return the bound determined by Lemma 3.4*/
GEN getBoundOne();
/*return the bound determined by Lemma 3.5*/
GEN getBoundTwo();
/*return all prime ideals whose norms are less than
*the bound which is determined by Lemma 3.4 (resp.Lemma 3.5)
*and corresponds to the parameter num_condition 1 (resp.2)*/
GEN getPrimeTable(int num_condition);
\end{lstlisting}

(ii) The methods of $Cideal$:
\begin{lstlisting}{language=C++}
/*return the set W_{m-1} corresponding to the prime
*ideal represented by the class Cideal*/
GEN getSetInitW();

/*return the set G_{m-1} corresponding to the prime
*ideal represented by the class Cideal*/
GEN getSetInitG();

/*return all ideals whose norms are less than
*the norm of the prime ideal represented by
*the class Cideal*/
GEN fgetAllideal();

/*return the set C_{m-1} corresponding to the prime
*ideal represented by the  class Cideal;
*this is the parent thread function*/
GEN Para_getSetInitC();

/*This is the child thread function*/
static void* Part_getSetInitC(void *arg);

/*check  condition I corresponding to the prime ideal
*represented by the class Cideal*/
bool checkConditionOne();

/*check condition II corresponding to the prime ideal
*represented by the class Cideal*/
bool checkConditionTwo();

/*return the set U_{m} corresponding to the prime
*ideal represented by the class Cideal*/
GEN getUm();
\end{lstlisting}

(iii) The methods of $Ccheck$:
\begin{lstlisting}{language=C++}
/*the parent thread function to check condition I */
bool Para_checkConditionOne(int num_thread);

/*the parent thread function to check condition II */
bool Para_checkConditionTwo(int num_thread);

/*the parent thread function to check condition I*/
static void* Part_checkConditionOne(void *arg);

/*the child thread function to check condition II*/
static void* Part_checkConditionTwo(void *arg).
\end{lstlisting}
\end{subsection}

\begin{subsection}{The algorithms implemented by  the methods.}
\begin{subsubsection}{Some frequently-used algorithms}
	
During decreasing the value $m$, there are three things we must  compute from time to time. The first one is to decompose  a (positive) prime number $p$ into prime ideals in the cyclic quartic field $F$, the second one is to obtain  the generators of an ideal in the cyclic quartic field $F$ and the third one is to get the condition of determining whether an element of $\mathcal{O}_F$ is in the group $U_{m-1}.$ However,  the three things can be done by using Lemma 4.2, and Algorithm 4.1 and Algorithm 4.2 below. Moreover, as is well known, Lemma 4.1 can be  implemented by the PARI's functions \textbf{GEN idealprimedec(GEN nf, GEN p)} and Algorithm 4.1 and Algorithm 4.2
by \textbf{GEN bnfisprincipal0(GEN bnf, GEN x, long flag)}
and
\textbf{GEN bnfissunit(GEN bnf, GEN sfu, GEN x)}.

\begin{lemma}[Theorem 4.8.13 (\cite{cohen138})]\quad
	Let $F=\mathbb{Q}(\theta)$ ba a number field, where $\theta$ is an algebiaic integer, whose minimal polnomial is denoted $T(X)$. Let $f$ be the index of $\theta$. Then for any prime $p$ not dividing $f$ one can obtain the prime decomposition of $p\mathcal{O}_{F}$ as follows. Let
	$$T(X)\equiv \prod_{1}^{g}T_{i}(X)^{e_{i}}\pmod p$$
	be the decomposition of $T$ into irriducible factors in $\mathbb{F}_{p}[X],$ where the $T_{i}(X)$ are taken to be monic. Then
	$$p\mathcal{O}_{F}=\prod_{i=1}^{g}{\mathcal{P}_{i}}^{e_{i}},$$
	where
	$$\mathcal{P}_{i}=(p,T_{i}(\theta))=p\mathcal{O}_{F}+T_{i}(\theta)\mathcal{O}_{F}.$$
	Furthermore, the residual index $f_i$ is equal to the degree of $T_i(X).$
\end{lemma}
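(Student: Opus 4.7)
The plan is to reduce the decomposition of $p\mathcal{O}_F$ to the factorization of a polynomial modulo $p$, via the master isomorphism
$$\mathcal{O}_F/p\mathcal{O}_F \;\cong\; \mathbb{F}_p[X]/\bar T(X),$$
where $\bar T$ denotes the reduction of $T$ modulo $p$. The hypothesis $p \nmid f$ on the index of $\mathbb{Z}[\theta]$ in $\mathcal{O}_F$ is what makes this available; once it is in hand, the rest of the proof is structural.

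First I would verify the master isomorphism. Consider the finite abelian group $M = \mathcal{O}_F/\mathbb{Z}[\theta]$, whose order is $f$. From the short exact sequence $0 \to \mathbb{Z}[\theta] \to \mathcal{O}_F \to M \to 0$ and the snake lemma applied to multiplication by $p$, the assumption $p\nmid f$ forces $M/pM = 0 = M[p]$, which gives that the induced ring map $\mathbb{Z}[\theta]/p\mathbb{Z}[\theta] \to \mathcal{O}_F/p\mathcal{O}_F$ is an isomorphism. Combined with the obvious identification $\mathbb{Z}[\theta]/p\mathbb{Z}[\theta] \cong \mathbb{Z}[X]/(p,T(X)) \cong \mathbb{F}_p[X]/\bar T(X)$, this establishes the master isomorphism.

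Next I would apply the Chinese Remainder Theorem to the factorization $\bar T = \prod_{i=1}^g \bar T_i^{e_i}$, obtaining
$$\mathcal{O}_F/p\mathcal{O}_F \;\cong\; \prod_{i=1}^{g} \mathbb{F}_p[X]/\bar T_i(X)^{e_i}.$$
The prime ideals of $\mathcal{O}_F$ lying over $p$ correspond to maximal ideals of the left-hand side, which correspond on the right-hand side to the unique maximal ideal $(\bar T_i)$ of each local Artinian factor. Pulling back under the isomorphism shows that the $i$-th prime is generated modulo $p$ by $T_i(\theta)$; that is, $\mathcal{P}_i = p\mathcal{O}_F + T_i(\theta)\mathcal{O}_F$. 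To extract the ramification data, I would note that each local factor $\mathbb{F}_p[X]/\bar T_i^{e_i}$ has length $e_i$ over itself and residue field $\mathbb{F}_p[X]/\bar T_i \cong \mathbb{F}_{p^{\deg T_i}}$; translating these invariants back gives ramification index $e_i$ and residue degree $f_i = \deg T_i$, with the consistency check $\sum e_i f_i = n = \deg T$.

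The main obstacle is the first step: showing that the coprimality $p\nmid f$ really does promote the inclusion $\mathbb{Z}[\theta] \hookrightarrow \mathcal{O}_F$ to an isomorphism after reduction mod $p$. Once this is secured, everything else is formal manipulation with the CRT and the standard structure of finite-length algebras over a field, and the explicit generators for $\mathcal{P}_i$ fall out without further computation.
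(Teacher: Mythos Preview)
Your argument is correct and is the standard proof of Dedekind's factorization theorem. Note, however, that the paper does not prove this lemma at all: it is simply quoted from Cohen's textbook (Theorem~4.8.13 of \cite{cohen138}) as a known tool, with no proof given in the paper itself. So there is nothing in the paper to compare against beyond the citation, and your write-up would serve perfectly well as a self-contained justification.
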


\begin{algorithm}[!htb]
	\caption{(Algorithm 6.5.10 (\cite{cohen138}))}
	\begin{algorithmic}
		\REQUIRE
		Given an ideal $I$ of $\mathcal{O}_F$ for a number field $F=\mathbb{Q}(\theta).$\\
		\ENSURE  Test whether $I$ is a principal ideal, and if it is,compute an $\alpha\in{F}$ such that $I=\alpha\mathcal{O}_F.$\\
		\STATE \textbf{1.[reduce to primitive]} If $I$ is not a primitive integral ideal, compute a rational number $a$ such that $I/(a)$ is primitive integral, and set $I\leftarrow I/(a).$\\
		\STATE \textbf{2.[Small norm]} If $N(I)$ is divisible only by prime numbers below the prime ideals in the factor base, set $v_{i}\leftarrow 0$ for $i<s,\beta\leftarrow a$ and go to step $4.$\\
		\STATE \textbf{3.[Generate random relations]} Choose random nonnegative integers $v_{i}<20$ for $i<s$,compute the ideal $I_{1}\leftarrow I\prod_{1\leq{i}\leq{s}}S_{i}^{v_i},$ and let $J=I_{1}/(\gamma)$ be the ideal obtained by LLL-reducing $I_1$ along the direction of the zero vector. If $N(J)$ is divisible only bt the prime numbers less than equal to $L_1$, set $I\leftarrow J, \beta\leftarrow a\gamma$ and go to step 4. Otherwise, go to step 3.\\
		\STATE \textbf{4.[Factor $I$]} Using Algorithm 4.8.17 in \cite{cohen138}, factor $I$ on the factor base FB. Let $I=\prod_{1\leq{i}\leq{k}}p_{i}^{x_i}.$ Let $X$(resp.Y) be the column vector of the $x_i-v_i$ for $i\leq r$(resp.$i>r$), where $r$ is the number of rows of the matrix $B,$ as above, and where we set $v_i=0$ for $i>s.$\\
		\STATE \textbf{5.[Check if principal]} Let $Z\leftarrow D^{-1}U(X-BY)$(since $D$ is a diagonal matrix, no matrix inverse must be computed here). If some entry of $Z$ is not integral, output a message saying that the ideal $I$ is not a principal ideal and terminate the algorithm.\\
		\STATE \textbf{6.[Use Archimedean information]} Let A be the ($c_1+k$)-column vector whose first $c_1$ elements are zero, whose next $r$ elements are the elements of Z, and whose last $k-r$ elements are element of $Y.$ Let $A_{C}=(a_{i})_{1\leq{i}\leq{r_{u}}}\leftarrow{M_{C}^{''}A}.$\\
		\STATE \textbf{7.[Restore correct information]} Set $s\leftarrow(\mbox{ln}N(I)/n),$ and let $A'=(a_{i}')_{1\leq{i}\leq{n}}$ be defined by $a_{i}'\leq{\mbox{exp}(s+a_i)}$ if $i\leq{r_1},a_{i}'\leftarrow \mbox{exp}(s+\bar{(a_{i-r_2})})$ if $r_{u}<i\leq n$\\
		\STATE \textbf{8.[Round]} Set $A''\leftarrow{\Omega^{-1}A'}$ where $\Omega=\sigma_j(\omega_i)$ as in Algorithm 6.5.8 in\cite{cohen138}. The coefficients of $A''$ must be close to rational integers. If this is not the case, then either the precision used to make the computation was insufficient or the desired $\alpha$ is too large. Otherwise, round the coefficients of $A''$ to the nearest integer.\\
		\STATE \textbf{9.[Terminate]} Let $\alpha '$ be the element of $\mathcal{O}_F$ whose coordinates in the integral basis are given by the vector $A''.$ Set $\alpha\leftarrow{\beta\alpha '}.$ If $I\neq\alpha\mathcal{O}_F,$ output an error message stating that the accuracy is not sufficient to compute $\alpha.$ Otherwise, output $\alpha$ and terminate the algorithm.
	\end{algorithmic}
\end{algorithm}

\begin{algorithm}[!htb]
	\caption{(Algorithm 7.4.8 (\cite{cohen193}))}
	\begin{algorithmic}
		\STATE Let Cl$(F)=(B,D_{B})$ be the SNF of the class group of $F$, where $B=(\bar{\mathfrak{b}_{i}})$ and the $\mathfrak{b}_i$ are the ideals of $F.$ The algorithm computes algebraic integers $\gamma_{i}$ for $1\leq{i}\leq{s}$
		such that $U_{S}(F)=U(F)\oplus_{1\leq{i}\leq{s}}{\mathbb{Z}\gamma_{i}}.$
		We let $\mathfrak{p}_{i}$ be the prime ideals of $S.$\\
		\STATE\textbf{1.[Compute discrete logarithms]} Using the principal ideal algorithm, compute the matrix $P$ whose columns are the discrete logarithms of $\bar{\mathfrak{p}}$ with respect to $B,$
		for each $\mathfrak{p}\in{S}.$\\
		\STATE\textbf{2.[Compute big HNF]} Using one of the algorithms for HNF computations, compute the unimodular matrix $U=\Big(\begin{matrix}U_1 & U_2\\U_3 & U_4\end{matrix}\Big)$ such that $(P|D_{B})U=(0|H)$ with H in HNF.\\
		\STATE\textbf{3.[Compute $\gamma\mathcal{O}_{F}$]} Compute the HNF W of the matrix $U_1,$ and set $[a_1,a_2,\cdots,a_s]\leftarrow{[\mathfrak{p}_{1},\cdots,\mathfrak{p}_{s}]W}.$\\
		\STATE\textbf{4.[Find generators]}(Here the $a_{j}$ are principal ideals)Using the principal ideal algorithm again, for each $j,$ find $\gamma_{j}$ such that $a_j=\gamma_{j}\mathcal{O}_{F}.$ Output the $\gamma_{j}$ and terminate the algorithm.
	\end{algorithmic}
\end{algorithm}

\end{subsubsection}

\begin{subsubsection}{The algorithms implemented by the methods of  CquarField}
	
In the class\\ {\it Cquarfield}, by Lemma 2.3, it is easy to design an algorithm implemented by the method \textbf{GEN transMatrix()} which returns the transition matrix from the basis $1,\beta,\beta^2,\beta^3$ to the basis $\gamma_0,\gamma_1,\gamma_2,\gamma_3$;
similarly, by Lemma 3.2, it is also easy to design  algorithms implemented by  \textbf{GEN getc\_1()}, \textbf{GEN getc\_2()}  which return the constants $c_1$, $c_2$  corresponding to the field $F=\mathbb{Q}\Big(\sqrt{-(D+B\sqrt{D})}\Big);$ by Lemma 3.4 and Lemma 3.5, it is very easy to design an algorithm implemented by  the methods  \textbf{GEN getBoundOne()} and \textbf{GEN getBoundTwo()} which can be used to compute  the bounds for  condition I and condition II.

We must obtain all prime ideals whose  norms are less than the bounds for condition I and condition II, which can be realized by the method \textbf{GEN getPrimeTable(int num\_condition)} of {\it CquarField}.

In fact, let $b_1=N(v_{m_{0}}),$ $b_2=N(v_{m_{0}'}),$ and let
$$T_{F,i}=\{p=\mathfrak{P}\cap\mathbb{Z}\in{\mathbb{Z}}|N(\mathfrak{P})< b_i,\mathfrak{P}\in{\mbox{spec}\mathcal{O}_{F}}\},$$
$$T_{F,i}'=\{\mathfrak{P}\in{\mbox{spec}\mathcal{O}_{F}}|N(\mathfrak{P})<b_i\},\ \ i=1,2,\ \ \ \ \ \ \ $$
where  spec$\mathcal{O}_{F}$ denotes the set of  prime ideals of the cyclic quartic field $F.$
To obtain the above sets, we design the following Algorithm 4.3 which can be implemented by the method \textbf{GEN getPrimeTable(int num\_condition)} of {\it CquarField}.
\begin{algorithm}[!htb]
	\caption{(The algorithm on {\bf getPrimeTable()})}
	\begin{algorithmic}
		\STATE \textbf{1.[Obtain the bound on norm]} By Lemma 4.5, Lemma 4.6 and Theorem 4.7, the bound on norm $b_1$(resp.$b_2)$ satisfying  condition I (resp. condition II) can be obtained.\\
		\STATE \textbf{2.[Obtain the set $T_{F,1}$(resp.$T_{F,2}$)]} Using the PARI' function GEN factoru(ulong n), the factorization of n can be returned. Moreover, the result is a 2-component vector [P, E], where P and E  are the prime divisors of n and the valuation of $n$ at prime point $p$ respectively.\\
		\STATE \textbf{3.[Obtain the sets $T_{F,1}^{'}$(resp.$T_{F,2}^{'}$)]} Using Algorithm 1, for any $k=p^{s}\in{T_{F,1}^{'}}$(resp.$T_{F,2}^{'}),$ the factoration ideals $\mathcal{P}_{i}$ of $p$ can be obtained. By comparing the value $s$ with the norm of the ideal $\mathcal{P}_{i}$, the elements of the set $T_{F,1}'$(resp.$T_{F,2}'$) can be obtained.
	\end{algorithmic}
\end{algorithm}
 
\

\

\

\

\

\

\

\

\

\
\end{subsubsection}
\begin{subsubsection}{The algorithms implemented by the methods of  Cideal}
In the class {\it Cideal}, the method  \textbf{GEN getSetInitG()} can be used to compute the set $G_{m-1}$ corresponding to the object, prime ideal $v_m$, of \textbf{Cideal}. We can easily finish the codes of the method \textbf{GEN getSetInitG()} by using the  PARI's functions \textbf{GEN znstar(GEN n)} and
\textbf{GEN idealstar0(GEN nf, GEN I, long flag)}, because the two functions
 have implemented  the following well-known Algorithm 4.4.
\begin{algorithm}[!htb]
\caption{(Algorithm 4.2.2 (\cite{cohen193}))}
\begin{algorithmic}
\STATE Let $m_0=\prod_{\mathfrak{P}}\mathfrak{P}^{v_{\mathfrak{P}}}$ be an integral ideal, and assume that we are given the SNF of $(\mathcal{O}_{F}/\mathfrak{P}^{v_{\mathfrak{P}}})^{*}=(G_{\mathfrak{P}},D_{\mathfrak{P}}).$ The algorithm computes the SNF of $(\mathcal{O}_{F}/m_{0})^{*}.$\\
\STATE\textbf{1.[Compute $\alpha_{\mathfrak{P}}$ and $\beta_{\mathfrak{P}}$]} Using Extended Euclid Algorithm in Dedekind Domains (Algorithm 1.3.2 (\cite{cohen193})), compute $\alpha_{\mathfrak{P}}$ and $\beta_{\mathfrak{P}}$ such that $\alpha_{\mathfrak{P}}\in{m_{0}/\mathfrak{P}^{v_{\mathfrak{P}}}},\beta_{\mathfrak{P}}\in{\mathfrak{P}^{v_{\mathfrak{P}}}}$ and $\alpha_{\mathfrak{P}}+\beta_{\mathfrak{P}}=1.$\\
\STATE\textbf{2.[Terminate]} Let $G$ be the concatenation of the $\beta_{\mathfrak{P}}1_{\mathcal{O}_{F}}+\alpha_{\mathfrak{P}}G_{\mathfrak{P}}$ and let $D$ be the diagonal concatenation of the SNF matrices $D_\mathfrak{P}.$ Using the algorithm of SNF for Finite Groups (Algorithm 4.1.3 (\cite{cohen193})) on the system of generators and relations $(G,D),$ output the SNF of the group $(\mathcal{O}_{F}/m_{0})^{*}$ and the auxiliary matrix $U_{\alpha},$ and terminate the algorithm.
\end{algorithmic}
\end{algorithm}

In order to obtain the set $W_{m-1}$ corresponding to the prime ideal $v_{m}$, we design the method \textbf{GEN getSetInitW()} of  {\it Cideal}. In the process of realizing this method, we use the PARI's function \textbf{GEN ideallist0(GEN nf, long bound, long flag)}, because they have implemented the following Algorithm 4.5 and returned all ideals whose norms are less than the value {\bf bound}.
We also give Algorithm 4.6, which outputs the set $W_{m-1}$ and is implemented by the method \textbf{GEN getSetInitW()}, as follows.

\

\

\

\

\

\

\

\begin{algorithm}[!htb]
\caption{(Algorithm 2.3.23 (\cite{cohen193}))}
\begin{algorithmic}
\STATE Let $K$ be a number field and $B$ be a positive integer. The algorithm outputs a list $\mathcal{L}$ such that for each $n\leq{B},$ $\mathcal{L}_{n}$ is the list of all integral ideals of absolute norm equal to $n.$\\
\STATE \textbf{1.[Initialize]}For $2\leq{n}\leq{B}$ set $\mathcal{L}_{N}\leftarrow{\emptyset},$ then set $\mathcal{L}_{1}\leftarrow{\mathcal{O}_{K}}$ and $p\leftarrow{0}.$\\
\STATE \textbf{2.[Next prime]}Replace $p$ by the smallest prime strictly larger than $p.$ If $p>B,$ output $\mathcal{L}$ and terminate the algorithm.\\
\textbf{3.[Factor $\mathbb{O}_{F}$]} Using Algorithm 6.2.9 in \cite{cohen138}, factor $p\mathcal{O}_K$ as $p\mathcal{O}_K=\prod_{1\leq{i}\leq{g}}\mathcal{P}_{i}^{e_{i}}$ with $e_{i}\geq{1},$ and let $f_{i}=f(\mathcal{P}_{i}|p).$ Set $j\leftarrow{0}.$\\
\STATE \textbf{4.[Next prime ideal]} Set $j\leftarrow{j+1}.$ If $j>g,$ go to step 2. Otherwise, set $q\leftarrow{p^{f_{j}}},n\leftarrow{0}.$\\
\STATE \textbf{5.[Loop through all multiples of $q$]} Set $n\leftarrow{n+q}.$ If $n>B,$ go to step 4. Otherwise,set $\mathcal{L}_{n}\leftarrow{\mathcal{L}_{n}\cup p_j\mathcal{L}_{n/q}},$ where $\mathcal{L}_{n}$ is the list of products by the ideal $p_j$ of the elements of
$\mathcal{L}_{n/q}$ and go to step 5.\\
\end{algorithmic}
\end{algorithm}

\begin{algorithm}
\caption{(The algorithm on {\bf getSetInitW()})}
\begin{algorithmic}
\STATE\textbf{1.[Initialize]} For the prime ideal $v_m$ set $W_{m-1}\leftarrow{\emptyset}$ and $\mathcal{I}\leftarrow{\emptyset}.$\\
\STATE\textbf{2.[Obtain ideals which nrom are less than $N(v_{m})$]} Using the PARI's function {\bf GEN ideallist0(GEN nf, long bound, long flag)}, all of ideals whose norms are less than $N(v_{m})$ can be obtained. Then put them into the set $\mathcal{I}$\\
\STATE\textbf{3.[Obtain all prime ideals whose norm are less than  $N(v_{m})$]} By looping through the set $\mathcal{I}$ and checking the structure of the ideal returned by PARI'function, we can get all prime ideals whose norm are less than  $N(v_{m})$.\\
\STATE\textbf{4.[Obtain the set $W_{m-1}$]} For the prime ideal $\mathcal{P}_{i},$ by using the PARI's function prime ideals whose norms are less than  $N(v_{m})$, the generator $\alpha_{i}$ can be returned. Then set $W_{m-1}\leftarrow{\alpha_{i}},$ where $i=1,2,\cdots,m.$
\end{algorithmic}
\end{algorithm}

In the process of obtaining  the sets $G_{m-1}$, $W_{m-1}$ and $C_{m-1}$, the most difficult thing is the computation of  $C_{m-1}$, because we  meet the following two difficulties:

(i)   The set $C_{m-1}$ is too large when $N(v_{m})$ is large since we have $|C_{m-1}|=N(v_{m})-1$;

(ii) we know that the set $C_{m-1}$ consists of the  representatives of some elements in $k_{v_{m}}^{*}$, but we can not ensure that the set $C_{m-1}$ must satisfy condition I and condition II under  arbitrary-chosen   representatives.

 To  overcome these difficulties, we  use the method of traversal but with the choice of representatives in a conjecturally right way.

Firstly, we find that the element $c\in{C_{m-1}}$ should be ``some shortest distance point" in ``some distance" of the set $c+v_{m}.$ So in order to look for the right $c\in{C_{m-1}}$, we set the range from an element whose norm is one.

Secondly,  it is  our choice  to take full advantage of multi-core processor hardware performance  to reduce the computation time. Thus,  we must use the technology of  the multi-threaded parallel computing to improve the speed of  Algorithm 4.7 obtaining the set $C_{m-1}$ as follows.
\

\

\
\begin{algorithm}
\caption{(The algorithm on {\bf getSetInitC()})}
\begin{algorithmic}
\STATE Let $v_{m}$ be a prime ideal of $\mathcal{O}_F.$ The algorithm outputs a set $C_{m-1}$ satisfying the following conditions:\\
(i) The set $C_{m-1}$ consists of the  representatives of some elements in $(k_{v_{m}})^{*}$;\\
(ii) For any element $c\in{C_{m-1}},$ the equation $N(c)=min\{N(c+t\alpha_{m})|t\in{\mathcal{O}_F}\}$  holds.\\
\STATE\textbf{1.[Initialize]} Set num\_good$\leftarrow0$ and $C_{m-1}\leftarrow{\emptyset}$. Invoking the methods \textbf{GEN getc\_1()}
and \textbf{GEN getc\_2()}, we can get the constant numbers $c_1$ and  $c_2,$ respectively.
Moreover, invoking the PARI's function \textbf{GEN ideallist0(GEN nf, long bound, long flag)} and \textbf{long pr\_get\_f(GEN pr)},
we can get the residue class degree $f_m$ of $v_m$ and the set $C'_{m-1}$ of all ideals whose norms are less than or equal to $(c_1c_2)^2N(v_m),$
 respectively. Invoking the method \textbf{GEN Cideal::getSetInitG()},
we can get the unique  element $g\in{G_{m-1}}.$
Lastly, set num\_C$'\leftarrow|C'_{m-1}|$ and num\_C$\leftarrow{N(V_m)-1}.$ \\
\STATE\textbf{2.[Compare num\_good with num\_C]} If num\_good=num\_C  holds, the algorithm return $C_{m-1}$ and is terminated.\\
\STATE\textbf{3.[Set the germs of $C_{m-1}$]} For $1\leq{i}\leq{num\_C}$, if $f_m=1$ set c\_i$\leftarrow{i}$.
Otherwise, set c\_i$\leftarrow{g^i}.$\\
\STATE\textbf{4.[Look for the appropriate elements in $C_{m-1}$]}
For $1\leq{j}\leq{num\_C'}$ and $1\leq{k}\leq{70},$  set $c'\_j\leftarrow C'_{m-1}[j].$
Then invoke the \textbf{PARI}'s function \textbf{long idealval(GEN nf, GEN x, GEN pr)}
to decide whether or not $c_i-c'_j\xi^k$ is in $U_m.$
Let the function's returned value be $b$.
If $b>0$, set $c_i\leftarrow{c'_j\xi^k}$ and $num\_good\leftarrow{num\_good+1},$ and go to step 2;
otherwise, set $j\leftarrow j+1$ and $k\leftarrow k+1.$
\end{algorithmic}
\end{algorithm}

 In the class {\it Cideal}, the methods \textbf{GEN Para\_getSetInitC()} and \textbf{static void* Part\_getSetInitC(void *arg)}
are the parent thread and the child thread respectively.
Using these methods, we can obtain the set $C_{m-1}$ corresponding to the prime ideal $v_{m}.$
Moreover we can change the number of the child threads with  different computer's hardware.

In the class {\it Cideal}, the two methods introduced above are
\textbf{bool Cideal::checkC\\
onditionOne()} and
\textbf{bool Cideal::checkConditionTwo()}.
As a result, conditions I and condition II can be verified respectively for the prime ideal $v_{m}$ by  the two methods,
which implement respectively Algorithm 4.8 and Algorithm 4.9 below.
\

\

\

\

\

\

\

\

\

\

\

\

\

\

\

\

\

\

\

\

\begin{algorithm}
\caption{(The algorithm on {\bf checkConditionOne()})}
\begin{algorithmic}
\STATE Let $v_{m}$ be a prime ideal of $\mathcal{O}_F.$ The algorithm check whether or not condition I is hold for $v_m.$\\
\STATE\textbf{1.[Initialize]} Set num\_good$\leftarrow0$. Invoking the methods \textbf{GEN getSetInitW()},
\textbf{GEN getSetInitC()}and \textbf{getUm()}, we can get the sets $W_{m-1},C_{m-1}$ and $U_m,$ respectively.
Moreover, we can get the cardinal numbers of the sets $W_{m-1},C_{m-1},$
denoted  by $num\_W$ and $num\_C,$ respectively.\\
\STATE\textbf{2.[Compare num\_good with num\_W]} If num\_good=num\_W  holds, the algorithm returns true and is terminated.\\
\STATE\textbf{3.[Loop through all element in SetW]} For $1\leq{i}\leq{num\_W}$, set w\_i$\leftarrow{W_{m-1}[i]}$.\\
\STATE\textbf{4.[Look for the appropriate element $c$ in setC]}
For $1\leq{j}\leq{num\_C},$ set c\_j$\leftarrow{C_{m-1}[j]}$.
Invoking the \textbf{PARI}'s function \textbf{GEN bnfissunit(GEN bnf, GEN sfu, GEN x)} and getting it's returned value $b$,
we can decide whether or not $\frac{w_i}{c_j}-1$ is in $U_m.$ More precisely,   if $b>0$, set $num\_good\leftarrow{num\_good+1}$ and go to step 2;
otherwise, set j$\leftarrow j+1.$
\end{algorithmic}
\end{algorithm}
\begin{algorithm}
\caption{(The algorithm on {\bf checkConditionTwo()})}
\begin{algorithmic}
\STATE Let $v_{m}$ be a prime ideal of $\mathcal{O}_F.$ The algorithm check whether or not condition II  holds for $v_m.$\\
\STATE\textbf{1.[Initialize]} Set num\_good$\leftarrow0$. Invoking the methods \textbf{GEN getSetInitG()},
\STATE\textbf{GEN getSetInitC()}and \textbf{getUm()}, we can get the sets $G_{m-1}=\{g\},C_{m-1}$ and $U_m,$ respectively.
Moreover, we can get the cardinal number of the set $C_{m-1}.$
  denoted  by $num\_C.$\\
\STATE\textbf{2.[Compare num\_good with num\_C]} If num\_good=num\_C  holds, the algorithm returns true and is terminated.\\
\STATE\textbf{3.[Loop through all element in SetC]} For $1\leq{i}\leq{num\_C}$, set c\_i$\leftarrow{C_{m-1}[i]}$.\\
\STATE\textbf{4.[Look for the appropriate element $c'$ in setC]}
For $1\leq{j}\leq{num\_C},$ set c'\_j$\leftarrow{C_{m-1}[j]}$.
Invoking the \textbf{PARI}'s function \textbf{GEN bnfissunit(GEN bnf, GEN sfu, GEN x)} and getting it's returned value $b$,
we can decide whether or not $\frac{c_i}{gc'_j}-1$ is in $U_m$  where $g$ is the unique element in set $G_{m-1}.$
More precisely,  if $b>0$, set $num\_good\leftarrow{num\_good+1}$ and go to step 2;
otherwise, set j$\leftarrow j+1.$
\end{algorithmic}
\end{algorithm}

\end{subsubsection}
\begin{subsubsection}{The algorithms implemented the methods of Ccheck}
In the class {\it Cideal}, the method  \textbf{GEN getSetInitG()} can be used to compute the set $G_{m-1}$ corresponding to the object, prime ideal $v_m$, of \textbf{Cideal}. We can easily finish the codes of the method \textbf{GEN getSetInitG()} by using the  PARI's functions \textbf{GEN znstar(GEN n)} and
\textbf{GEN idealstar0(GEN nf, GEN I, long flag)}, because the two functions
have implemented  the following well-known Algorithm 4.10.

\begin{algorithm}[!htb]
	\caption{(Algorithm 4.2.2 (\cite{cohen193}))}
	\begin{algorithmic}
		\STATE Let $m_0=\prod_{\mathfrak{P}}\mathfrak{P}^{v_{\mathfrak{P}}}$ be an integral ideal, and assume that we are given the SNF of $(\mathcal{O}_{F}/\mathfrak{P}^{v_{\mathfrak{P}}})^{*}=(G_{\mathfrak{P}},D_{\mathfrak{P}}).$ The algorithm computes the SNF of $(\mathcal{O}_{F}/m_{0})^{*}.$\\
		\STATE\textbf{1.[Compute $\alpha_{\mathfrak{P}}$ and $\beta_{\mathfrak{P}}$]} Using Extended Euclid Algorithm in Dedekind Domains (Algorithm 1.3.2 (\cite{cohen193})), compute $\alpha_{\mathfrak{P}}$ and $\beta_{\mathfrak{P}}$ such that $\alpha_{\mathfrak{P}}\in{m_{0}/\mathfrak{P}^{v_{\mathfrak{P}}}},\beta_{\mathfrak{P}}\in{\mathfrak{P}^{v_{\mathfrak{P}}}}$ and $\alpha_{\mathfrak{P}}+\beta_{\mathfrak{P}}=1.$\\
		\STATE\textbf{2.[Terminate]} Let $G$ be the concatenation of the $\beta_{\mathfrak{P}}1_{\mathcal{O}_{F}}+\alpha_{\mathfrak{P}}G_{\mathfrak{P}}$ and let $D$ be the diagonal concatenation of the SNF matrices $D_\mathfrak{P}.$ Using the algorithm of SNF for Finite Groups (Algorithm 4.1.3 (\cite{cohen193})) on the system of generators and relations $(G,D),$ output the SNF of the group $(\mathcal{O}_{F}/m_{0})^{*}$ and the auxiliary matrix $U_{\alpha},$ and terminate the algorithm.
	\end{algorithmic}
\end{algorithm}

In order to obtain the set $W_{m-1}$ corresponding to the prime ideal $v_{m}$, we design the method \textbf{GEN getSetInitW()} of  {\it Cideal}. In the process of realizing this method, we use the PARI's function \textbf{GEN ideallist0(GEN nf, long bound, long flag)}, because they have implemented the following Algorithm 4.5 and returned all ideals whose norms are less than the value {\bf bound}.
We also give Algorithm 4.6, which outputs the set $W_{m-1}$ and is implemented by the method \textbf{GEN getSetInitW()}, as follows.

\begin{algorithm}[!htb]
	\caption{(Algorithm 2.3.23 (\cite{cohen193}))}
	\begin{algorithmic}
		\STATE Let $K$ be a number field and $B$ be a positive integer. The algorithm outputs a list $\mathcal{L}$ such that for each $n\leq{B},$ $\mathcal{L}_{n}$ is the list of all integral ideals of absolute norm equal to $n.$\\
		\STATE \textbf{1.[Initialize]}For $2\leq{n}\leq{B}$ set $\mathcal{L}_{N}\leftarrow{\emptyset},$ then set $\mathcal{L}_{1}\leftarrow{\mathcal{O}_{K}}$ and $p\leftarrow{0}.$\\
		\STATE \textbf{2.[Next prime]}Replace $p$ by the smallest prime strictly larger than $p.$ If $p>B,$ output $\mathcal{L}$ and terminate the algorithm.\\
		\textbf{3.[Factor $\mathbb{O}_{F}$]} Using Algorithm 6.2.9 in \cite{cohen138}, factor $p\mathcal{O}_K$ as $p\mathcal{O}_K=\prod_{1\leq{i}\leq{g}}\mathcal{P}_{i}^{e_{i}}$ with $e_{i}\geq{1},$ and let $f_{i}=f(\mathcal{P}_{i}|p).$ Set $j\leftarrow{0}.$\\
		\STATE \textbf{4.[Next prime ideal]} Set $j\leftarrow{j+1}.$ If $j>g,$ go to step 2. Otherwise, set $q\leftarrow{p^{f_{j}}},n\leftarrow{0}.$\\
		\STATE \textbf{5.[Loop through all multiples of $q$]} Set $n\leftarrow{n+q}.$ If $n>B,$ go to step 4. Otherwise,set $\mathcal{L}_{n}\leftarrow{\mathcal{L}_{n}\cup p_j\mathcal{L}_{n/q}},$ where $\mathcal{L}_{n}$ is the list of products by the ideal $p_j$ of the elements of
		$\mathcal{L}_{n/q}$ and go to step 5.\\
	\end{algorithmic}
\end{algorithm}

\begin{algorithm}
	\caption{(The algorithm on {\bf getSetInitW()})}
	\begin{algorithmic}
		\STATE\textbf{1.[Initialize]} For the prime ideal $v_m$ set $W_{m-1}\leftarrow{\emptyset}$ and $\mathcal{I}\leftarrow{\emptyset}.$\\
		\STATE\textbf{2.[Obtain ideals which nrom are less than $N(v_{m})$]} Using the PARI's function {\bf GEN ideallist0(GEN nf, long bound, long flag)}, all of ideals whose norms are less than $N(v_{m})$ can be obtained. Then put them into the set $\mathcal{I}$\\
		\STATE\textbf{3.[Obtain all prime ideals whose norm are less than  $N(v_{m})$]} By looping through the set $\mathcal{I}$ and checking the structure of the ideal returned by PARI'function, we can get all prime ideals whose norm are less than  $N(v_{m})$.\\
		\STATE\textbf{4.[Obtain the set $W_{m-1}$]} For the prime ideal $\mathcal{P}_{i},$ by using the PARI's function prime ideals whose norms are less than  $N(v_{m})$, the generator $\alpha_{i}$ can be returned. Then set $W_{m-1}\leftarrow{\alpha_{i}},$ where $i=1,2,\cdots,m.$
	\end{algorithmic}
\end{algorithm}
\

\

\

\

\

\

\

In the process of obtaining  the sets $G_{m-1}$, $W_{m-1}$ and $C_{m-1}$, the most difficult thing is the computation of  $C_{m-1}$, because we  meet the following two difficulties:

(i)   The set $C_{m-1}$ is too large when $N(v_{m})$ is large since we have $|C_{m-1}|=N(v_{m})-1$;

(ii) we know that the set $C_{m-1}$ consists of the  representatives of some elements in $k_{v_{m}}^{*}$, but we can not ensure that the set $C_{m-1}$ must satisfy condition I and condition II under  arbitrary-chosen   representatives.

To  overcome these difficulties, we  use the method of traversal but with the choice of representatives in a conjecturally right way.

Firstly, we find that the element $c\in{C_{m-1}}$ should be ``some shortest distance point" in ``some distance" of the set $c+v_{m}.$ So in order to look for the right $c\in{C_{m-1}}$, we set the range from an element whose norm is one.

Secondly,  it is  our choice  to take full advantage of multi-core processor hardware performance  to reduce the computation time. Thus,  we must use the technology of  the multi-threaded parallel computing to improve the speed of  Algorithm 4.13 obtaining the set $C_{m-1}$ as follows.
\begin{algorithm}
	\caption{(The algorithm on {\bf getSetInitC()})}
	\begin{algorithmic}
		\STATE Let $v_{m}$ be a prime ideal of $\mathcal{O}_F.$ The algorithm outputs a set $C_{m-1}$ satisfying the following conditions:\\
		(i) The set $C_{m-1}$ consists of the  representatives of some elements in $(k_{v_{m}})^{*}$;\\
		(ii) For any element $c\in{C_{m-1}},$ the equation $N(c)=min\{N(c+t\alpha_{m})|t\in{\mathcal{O}_F}\}$  holds.\\
		\STATE\textbf{1.[Initialize]} Set num\_good$\leftarrow0$ and $C_{m-1}\leftarrow{\emptyset}$. Invoking the methods \textbf{GEN getc\_1()}
		and \textbf{GEN getc\_2()}, we can get the constant numbers $c_1$ and  $c_2,$ respectively.
		Moreover, invoking the PARI's function \textbf{GEN ideallist0(GEN nf, long bound, long flag)} and \textbf{long pr\_get\_f(GEN pr)},
		we can get the residue class degree $f_m$ of $v_m$ and the set $C'_{m-1}$ of all ideals whose norms are less than or equal to $(c_1c_2)^2N(v_m),$
		respectively. Invoking the method \textbf{GEN Cideal::getSetInitG()},
		we can get the unique  element $g\in{G_{m-1}}.$
		Lastly, set num\_C$'\leftarrow|C'_{m-1}|$ and num\_C$\leftarrow{N(V_m)-1}.$ \\
		\STATE\textbf{2.[Compare num\_good with num\_C]} If num\_good=num\_C  holds, the algorithm return $C_{m-1}$ and is terminated.\\
		\STATE\textbf{3.[Set the germs of $C_{m-1}$]} For $1\leq{i}\leq{num\_C}$, if $f_m=1$ set c\_i$\leftarrow{i}$.
		Otherwise, set c\_i$\leftarrow{g^i}.$\\
		\STATE\textbf{4.[Look for the appropriate elements in $C_{m-1}$]}
		For $1\leq{j}\leq{num\_C'}$ and $1\leq{k}\leq{70},$  set $c'\_j\leftarrow C'_{m-1}[j].$
		Then invoke the \textbf{PARI}'s function \textbf{long idealval(GEN nf, GEN x, GEN pr)}
		to decide whether or not $c_i-c'_j\xi^k$ is in $U_m.$
		Let the function's returned value be $b$.
		If $b>0$, set $c_i\leftarrow{c'_j\xi^k}$ and $num\_good\leftarrow{num\_good+1},$ and go to step 2;
		otherwise, set $j\leftarrow j+1$ and $k\leftarrow k+1.$
	\end{algorithmic}
\end{algorithm}
\

\

\

\

\

\

\

\

\

\

\

\

\

\

\

\

\

\

\

\

\
In the class {\it Cideal}, the methods \textbf{GEN Para\_getSetInitC()} and \textbf{static void* Part\_getSetInitC(void *arg)}
are the parent thread and the child thread respectively.
Using these methods, we can obtain the set $C_{m-1}$ corresponding to the prime ideal $v_{m}.$
Moreover we can change the number of the child threads with  different computer's hardware.

In the class {\it Cideal}, the two methods introduced above are
\textbf{bool Cideal::checkC\\
	onditionOne()} and
\textbf{bool Cideal::checkConditionTwo()}.
As a result, conditions I and condition II can be verified respectively for the prime ideal $v_{m}$ by  the two methods,
which implement respectively Algorithm 4.14 and Algorithm 4.15 below.

\begin{algorithm}
	\caption{(The algorithm on {\bf checkConditionOne()})}
	\begin{algorithmic}
		\STATE Let $v_{m}$ be a prime ideal of $\mathcal{O}_F.$ The algorithm check whether or not condition I is hold for $v_m.$\\
		\STATE\textbf{1.[Initialize]} Set num\_good$\leftarrow0$. Invoking the methods \textbf{GEN getSetInitW()},
		\textbf{GEN getSetInitC()}and \textbf{getUm()}, we can get the sets $W_{m-1},C_{m-1}$ and $U_m,$ respectively.
		Moreover, we can get the cardinal numbers of the sets $W_{m-1},C_{m-1},$
		denoted  by $num\_W$ and $num\_C,$ respectively.\\
		\STATE\textbf{2.[Compare num\_good with num\_W]} If num\_good=num\_W  holds, the algorithm returns true and is terminated.\\
		\STATE\textbf{3.[Loop through all element in SetW]} For $1\leq{i}\leq{num\_W}$, set w\_i$\leftarrow{W_{m-1}[i]}$.\\
		\STATE\textbf{4.[Look for the appropriate element $c$ in setC]}
		For $1\leq{j}\leq{num\_C},$ set c\_j$\leftarrow{C_{m-1}[j]}$.
		Invoking the \textbf{PARI}'s function \textbf{GEN bnfissunit(GEN bnf, GEN sfu, GEN x)} and getting it's returned value $b$,
		we can decide whether or not $\frac{w_i}{c_j}-1$ is in $U_m.$ More precisely,   if $b>0$, set $num\_good\leftarrow{num\_good+1}$ and go to step 2;
		otherwise, set j$\leftarrow j+1.$
	\end{algorithmic}
\end{algorithm}
\begin{algorithm}
	\caption{(The algorithm on {\bf checkConditionTwo()})}
	\begin{algorithmic}
		\STATE Let $v_{m}$ be a prime ideal of $\mathcal{O}_F.$ The algorithm check whether or not condition II  holds for $v_m.$\\
		\STATE\textbf{1.[Initialize]} Set num\_good$\leftarrow0$. Invoking the methods \textbf{GEN getSetInitG()},
		\STATE\textbf{GEN getSetInitC()}and \textbf{getUm()}, we can get the sets $G_{m-1}=\{g\},C_{m-1}$ and $U_m,$ respectively.
		Moreover, we can get the cardinal number of the set $C_{m-1}.$
		denoted  by $num\_C.$\\
		\STATE\textbf{2.[Compare num\_good with num\_C]} If num\_good=num\_C  holds, the algorithm returns true and is terminated.\\
		\STATE\textbf{3.[Loop through all element in SetC]} For $1\leq{i}\leq{num\_C}$, set c\_i$\leftarrow{C_{m-1}[i]}$.\\
		\STATE\textbf{4.[Look for the appropriate element $c'$ in setC]}
		For $1\leq{j}\leq{num\_C},$ set c'\_j$\leftarrow{C_{m-1}[j]}$.
		Invoking the \textbf{PARI}'s function \textbf{GEN bnfissunit(GEN bnf, GEN sfu, GEN x)} and getting it's returned value $b$,
		we can decide whether or not $\frac{c_i}{gc'_j}-1$ is in $U_m$  where $g$ is the unique element in set $G_{m-1}.$
		More precisely,  if $b>0$, set $num\_good\leftarrow{num\_good+1}$ and go to step 2;
		otherwise, set j$\leftarrow j+1.$
	\end{algorithmic}
\end{algorithm}

\end{subsubsection}
\end{subsection}
\end{section}
\

\

\

\

\

\

\

\

\

\

\

\begin{section}{The proof of Theorem 1.2}

Let $F=\mathbb{Q}\Big(\sqrt{-(D+B\sqrt{D})}\Big)$ be an imaginary cyclic quartic field. For the case  $B=1,D=2,$ invoking the method \textbf{GEN CquarField::getBoundOne()}(resp.\textbf{GEN CquarField::getBoundOne()})
we can know that for the prime ideals whose norms are greater than or equal to $172.525$ (resp. $3253.529$), condition I (resp. condition II) holds.
Moreover, by invoking the method \textbf{bool Ccheck::Para\_checkCon\\dition
	One(int num\_thread)}
(resp. \textbf{bool Ccheck::Par
	a\_checkConditionOne (int num\_thread)}), it is proved that for the prime ideals whose norms are less than
$172.525$ (resp. $3253.529$), condition I (resp.condition II) holds also.

Similarly, for the case  $B=2,D=13$ we can show that\\
(i) the bound determined by Lemma 3.4 ( resp. Theorem 3.6) is $1173.7$ (resp. $17321.1$);\\
(ii) for the prime ideals whose norms are less than $1173.7$(resp. $17321.7$), condition I (resp.condition II) holds.

And for the case  $B=2,D=29$ we can show that\\
(i) the bound determined by Lemma 3.4 ( resp. Theorem 3.6) is $48710.1$ (resp. $192289.6$);\\
(ii) for the prime ideals whose norms are less than $48710.1$(resp. $192289.6$), condition I (resp.condition II) holds.
For $F=\mathbb{Q}\Big(\sqrt{-(2+\sqrt{2})}\Big),$ $\mathbb{Q}\Big(\sqrt{-(13+2\sqrt{13})}\Big)$ or $\mathbb{Q}\Big(\sqrt{-(29+2\sqrt{29})}\Big)$ by PARI/GP, we know  that the torsion element is only  $-1.$
Hence, it is easy to show that $K_2\mathcal{O}_F$ can be generated by the two elements of order 2:
$\{-1,-1\}, \{-1,\xi\},$
where $\xi$ is a fundamental  unit of $F.$

However, in \cite{Browkin000},  Browkin proved the following formula:
$$\mbox{2-rank}K_2\mathcal{O}_F=r_1(F)+g(2)-1+\mbox{2-rank}\Big(\mbox{Cl}(F)/\mbox{Cl}_2(F)\Big),$$
where $r_1(F)$ is the number of real places of $F, g(2)$
the number of primes over $2$,
and Cl$(F)$  the class group of $F.$
It is well known that Cl$(F)=1$ and that by $PARI/GP,$ there is only one prime in $\mathcal{O}_F$ lying over $2.$
So  the formula takes the form:
$$\mbox{2-rank}K_2\mathcal{O}_F=0+1-1+0=0.$$
Hence there is no element of order 2. Thus the tame kernel $K_2\mathcal{O}_F$ is trivial. The proof is completed.
\begin{remark}\quad
	For $F=\mathbb{Q}\Big(\sqrt{-(13+2\sqrt{13})}\Big)$ and $F=\mathbb{Q}\Big(\sqrt{-(29+2\sqrt{29})}\Big)$, we keep a record of every $C_{m-1}$ in some text files, which can be found in
	\url{http://pan.baidu.com/s/1kVnSOCn} and
	\url{https://pan.baidu.com/s/1dFRn8ch} respectively.
\end{remark}
\end{section}
\bibliographystyle{amsplain}

\end{document}